\providecommand{\algorithmname}{Algorithm}
\numberwithin{equation}{section}
\numberwithin{figure}{section}
\theoremstyle{plain}
\newtheorem{thm}{\protect\theoremname}[section]
\DeclareMathOperator*{\Ex}{\mathbb{E}}
\DeclareMathOperator*{\Pro}{\mathbb{P}}
\newtheorem{cor}[thm]{Corollary}
\newtheorem*{thm*}{\protect\theoremname}
\theoremstyle{definition}
\newtheorem{problem}[thm]{\protect\problemname}
\newtheorem*{problem*}{Problem}
\theoremstyle{remark}
\newtheorem*{rem*}{\protect\remarkname}
\theoremstyle{remark}
\newtheorem{rem}[thm]{\protect\remarkname}
\theoremstyle{definition}
\newtheorem{defn}[thm]{\protect\definitionname}
\theoremstyle{plain}
\theoremstyle{plain}
\newtheorem{fact}[thm]{\protect\factname}
\theoremstyle{plain}
\theoremstyle{definition}
\theoremstyle{plain}
\theoremstyle{plain}
\newtheorem{claim}[thm]{Claim}
\theoremstyle{plain}
\let\originalleft\left
\let\originalright\right
\renewcommand{\left}{\mathopen{}\mathclose\bgroup\originalleft}
\renewcommand{\right}{\aftergroup\egroup\originalright}
   \providecommand{\fg}{\ifdim\lastskip>\z@\unskip\fi~\frqq}%
\providecommand{\definitionname}{Definition}
\providecommand{\factname}{Fact}
\providecommand{\lemmaname}{Lemma}
\providecommand{\problemname}{Problem}
\providecommand{\propositionname}{Proposition}
\providecommand{\remarkname}{Remark}
\providecommand{\theoremname}{Theorem}
\providecommand{\examplename}{Example}
\global\long\def\eps{\varepsilon}
\newcommand{\erg}{{\rm erg}}
\newcommand{\Sch}{{\rm Sch}}
\newcommand{\IRS}{{\rm IRS}}
\newcommand{\Stab}{{\rm Stab}}
\newcommand{\Id}{{\rm Id}}
\newcommand{\Prob}{{\rm Prob}}
\newcommand{\val}{{\rm val}}
\newcommand{\fd}{{\rm fd}}
\newcommand{\sof}{{\rm sof}}
\newcommand{\surj}{{\rm surj}}
\newcommand{\RBS}{{\rm RBS}}
\newcommand{\dom}{{\rm Dom}}
\newcommand{\cF}{\mathcal{F}}
\newcommand{\RR}{\mathbb{R}}
\newcommand{\QQ}{\mathbb{Q}}
\newcommand{\NN}{\mathbb{N}}
\newcommand{\cP}{\mathcal{P}}
\newcommand{\cT}{\mathcal{T}}
\newcommand{\cQ}{\mathcal{Q}}
\newcommand{\Sub}{\frak{sub}}
\newcommand\Rok{{\rm Rok}}
\newcommand\supp{{\rm supp}}
\newcommand\perc{{\rm perc}}
\newcommand{\scrF}{\mathscr{F}}
\newcommand{\scrI}{\mathscr{I}}
\newcommand{\scrB}{\mathscr{B}}
\def\cc{{\curvearrowright}}
\def\G{{\Gamma}}
\newcommand{\mnote}[1]{\textcolor{red}{\footnotesize{\bf (Michael:} {#1}{\bf ) }}}
\title{Surjunctivity does not characterize cosoficity of invariant random subgroups}
\author[L.\ Bowen]{Lewis Bowen}
\address{Lewis Bowen\hfill\break
Department of Mathematics \hfill\break 1 University Station C1200 \hfill\break University of Texas at Austin\hfill\break Austin, TX, 78712 USA.}
\email{lpbowen@math.utexas.edu}
\author[M.\ Chapman]{Michael Chapman}
\address{Michael Chapman\hfill\break
	School of Mathematics\hfill\break
	Institute for Advanced Study,\hfill\break 1 Einstein Drive, Princeton, NJ 08540, USA.}
\email{mchapman@ias.edu}
\begin{document}
\begin{abstract}
A  group is \emph{surjunctive} if  every injective cellular automaton on it is also surjective.  Gottschalk famously conjectured that  all groups are surjunctive. This remains a central open problem in symbolic dynamics and descriptive set theory. 
     Gromov and Weiss  termed the notion of \emph{sofic} groups, and proved that all such groups are {surjunctive}, providing the largest class of groups which satisfy Gottschalk's conjecture. It is still open to decide whether all groups are sofic. This became a major open problem in group theory, and is related to other well known problems such as the Aldous--Lyons conjecture in probability theory and to Connes' embedding problem in the theory of operator algebras.
     
     A complementary natural question to ask is:
     Does the reverse implication to Gromov and Weiss' result holds? Namely, are all surjunctive groups sofic?  As currently there are no known non-sofic groups, answering this problem in the negative in the category of groups is still out of reach. This paper  resolves this problem in the   generalized setup of \emph{invariant random subgroups} of free groups (IRSs), where non (co)sofic objects were recently shown to exist by Lubotzky, Vidick and the two authors. Specifically, we prove that there \emph{exists} a surjunctive non (co)sofic IRS, resolving the aforementioned problem in the negative. Our proof uses a complexity theoretic approach, and in particular a recent development due to Manzoor, as well as the theory of Rokhlin entropy developed by Seward and others. As a byproduct of our proof technique, the non (co)sofic IRS we provide satisfies a condition stronger  than  surjunctivity; it satisfies a version of Seward's \emph{maximal Rokhlin entropy of Bernoulli Shifts} (RBS) criterion.
\end{abstract}
\maketitle

\section{\textbf{Introduction}}

Let $\Gamma$ be a finitely generated group,\footnote{Often the more generalized setup of discrete countable groups is used. But for the simplicity of various definitions, we prefer to keep the finite generation assumption throughout.}  $\Sigma$ a finite set (of colors), and $\Sigma^\Gamma$ the $\Sigma$-colorings of $\Gamma$. A \emph{cellular automaton}  on $\Gamma$ with color palette $\Sigma$ is a local, element independent recoloring mechanism of $\Gamma$; namely, it is a recoloring map $\Phi\colon \Sigma^\Gamma\to \Sigma^\Gamma$, and there are elements $\gamma_1,...,\gamma_n$ and a function $\phi\colon \Sigma^n\to \Sigma$ such that $\Phi(c)(x)=\phi(c(x\gamma_1),...,c(x\gamma_n))$ for every coloring $c\colon \Gamma\to \Sigma$ and element $x\in \Gamma$. 
A group $\Gamma$ is said to be \emph{surjunctive} if every injective cellular automaton on it  is also surjective.    
A  well known open problem due to Gottschalk \cite{gottschalk2006some} is:
    Are all finitely generated groups surjunctive?
In \cite{MR1694588} and \cite{weiss-2000}, Gromov and Weiss defined the notion of a \emph{sofic} group (see Definition \ref{defn:soficity_and_cosoficity}), and showed that all such groups are surjunctive. Subsequently, the following became a major open problem:
    Are all finitely generated groups sofic?
Another natural problem that arises from their work is:
\begin{problem}[\cites{MR1694588,weiss-2000}]\label{problem:characterize}
    Are all surjunctive groups sofic? Namely, does surjunctivity characterize soficity?
\end{problem}
Currently, no non-sofic group is known to exist, so a negative answer to Problem \ref{problem:characterize} is fairly out of reach.
But, there is a generalized setup to this one, in which both soficity and surjunctivity are well defined, and also non sofic objects were recently shown to exist \cite{BCLV_subgroup_tests,Tailored_MIPRE} --- this is the setup of \emph{invariant random subgroups} (IRSs) of free groups.\footnote{The study of invariant random subgroups was initiated independently by \cite{MR2749291}, \cite{abert2014kesten} and \cite{MR3193754} (and appeared implicitly in \cite{stuck1994stabilizers}). It has become a fruitful area of research, as several open problems in group theory, number theory and dynamics were resolved by studying IRSs (cf.\ \cite{Gelander_ICM2018, MR3664810,fraczyk2023infinite} and the references therein).}   

An invariant random subgroup of a given group is a probability measure over its subgroups which is conjugate invariant (see Section \ref{sec:IRSs} for a detailed definition). Every pair $(\Gamma,S)$ consisting of a group and a set of generators for it induces a short exact sequence $N\hookrightarrow \cF\twoheadrightarrow \Gamma$, where $\cF=\cF_S$ is  the free group with basis $S$. As ${\bf 1}_N$, the Dirac measure concentrated at $N$, is an IRS of $\cF$, the mapping $(\Gamma,S)\mapsto {\bf 1}_N$ embeds the finitely generated groups  into IRSs of free groups. 
This embedding is meaningful in our context, as IRSs can be \emph{cosofic} (see Definition \ref{defn:soficity_and_cosoficity}) and surjunctive (see 
Definition \ref{defn:surjunctive}), and ${\bf 1}_N$ is cosofic (respectively, surjunctive\footnote{It would be more natural to call this property \emph{cosurjunctivity}, but we decided to avoid this.}) if and only if $\Gamma\cong \nicefrac{\cF}{N}$ is sofic (respectively, surjunctive). Moreover, similar to the result of Gromov and Weiss, one can show (see Corollary \ref{cor:cosofic_IRSs_satisfy_RBS} and Theorem \ref{thm:RBS_implies_surj}) that a cosofic IRS is always surjunctive. 
So, the analogoue of Problem \ref{problem:characterize} arises --- are all surjunctive IRSs of free groups cosofic? Our main result is a negative answer to this problem:

\begin{thm}\label{thm:main}
    There exists a surjunctive non cosofic IRS of a finitely generated free group.
\end{thm}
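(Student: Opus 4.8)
The plan is to construct the surjunctive non-cosofic IRS by leveraging the recent complexity-theoretic machinery (Manzoor's development, building on the $\TMIP$/$\MIPco$ framework of Lubotzky--Vidick and the two authors) to produce an IRS of a free group that is \emph{not} cosofic, and then to show that one can arrange this construction so that the resulting IRS additionally satisfies Seward's maximal Rokhlin entropy of Bernoulli shifts (RBS) criterion. Since the excerpt promises (Corollary \ref{cor:cosofic_IRSs_satisfy_RBS} and Theorem \ref{thm:RBS_implies_surj}) that RBS implies surjunctivity, establishing the RBS property for our non-cosofic IRS immediately yields the theorem. So the work splits into two essentially independent pieces: (a) engineer a non-cosofic IRS with good entropy-theoretic structure, and (b) verify the RBS criterion for it and invoke $\RBS \Rightarrow \text{surjunctive}$.

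For part (a), I would start from the known constructions of non-cosofic IRSs: by the results cited from \cite{BCLV_subgroup_tests,Tailored_MIPRE}, the undecidability of the value problem for (tailored) nonlocal games / the separation $\MIPco \ne \RE$-type phenomena translates into the existence of an IRS $\mu$ of a free group $\cF_S$ that is not cosofic — concretely, $\mu$ arises as a limit/stabilizer object attached to a group-theoretic encoding of a Turing machine with no sofic approximation. The key refinement, drawing on Manzoor's work, is that these games/tests can be taken in a form where the underlying IRS has a prescribed "large" structure — e.g., the subgroups in the support are highly infinite-index or the associated Schreier-type dynamical system has controlled orbit structure — which is exactly what one needs to inject Bernoulli-shift behavior. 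The technical heart of (a) is choosing the encoding so that non-cosoficity is preserved while the IRS remains amenable to the entropy computation; this is where I expect to spend most of the effort.

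For part (b), recall that Seward's RBS criterion asks that the Rokhlin entropy of the relevant Bernoulli shift over the group (or over the IRS-indexed system) attains its maximal possible value $\log|\Sigma|$ for every finite alphabet $\Sigma$. The strategy is to show that the IRS from (a) admits, in its ergodic-theoretic realization, a factor (or the full system) on which Rokhlin entropy is maximal — using Seward's subadditivity and Abért--Weiss-type weak-containment arguments to compare the IRS's Bernoulli shift with the "free" Bernoulli shift and conclude no entropy is lost. Because Rokhlin entropy is monotone under factors and behaves well under the weak-$*$ limits used to build $\mu$, one expects the maximality to be inherited from the free group's Bernoulli shift provided the construction in (a) does not collapse too much; this compatibility is precisely the constraint that dictates the choices in (a). Once RBS holds, Theorem \ref{thm:RBS_implies_surj} gives surjunctivity, and the non-cosoficity from (a) completes the proof of Theorem \ref{thm:main}.

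The main obstacle, as flagged above, is the tension in part (a): the complexity-theoretic constructions are optimized to \emph{break} soficity/cosoficity, and they do so by forcing rigid, highly constrained local structure (this is what makes sofic approximation impossible), whereas the RBS criterion wants \emph{flexible}, entropy-rich structure (lots of independent randomness). Reconciling these — showing that one can have a non-cosofic IRS that is nonetheless entropically as rich as a free Bernoulli shift — is the crux, and I expect it to require a careful "padding" or "free-product-with-noise" modification of the base construction together with a proof that this modification neither restores cosoficity nor lowers the Rokhlin entropy below maximal.
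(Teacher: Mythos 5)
There is a genuine gap, and it lies at the heart of your plan: you propose to take a concrete non-cosofic IRS coming from the constructions of \cite{BCLV_subgroup_tests,Tailored_MIPRE}, massage it, and then \emph{verify} the RBS criterion for it directly (via subadditivity, weak containment, behaviour of Rokhlin entropy under weak* limits, etc.). But no explicit non-cosofic IRS is known --- the existence results you cite are themselves non-constructive corollaries of the undecidability of approximating $\val_\sof(\cT)$ (Fact \ref{fact:sof_val_is_RE_hard}) combined with inner and outer value approximations, so there is no concrete object whose ``local structure'' you can prescribe or pad. Moreover, there is no known technique for establishing RBS (or surjunctivity) for any particular non-cosofic IRS; your part (b) asserts that maximality of Rokhlin entropy is ``inherited'' through the construction, but nothing in the known theory supports this, and the paper deliberately avoids ever having to make such a verification for a specific object. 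In short, your two-step program (a)+(b) reverses the logic of the actual argument and each step, as stated, has no available proof.

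The paper's proof is purely existential and runs through Manzoor's amplification: one shows that the class $\IRS_\RBS(\cF)$ (which contains all cosofic IRSs by Corollary \ref{cor:cosofic_IRSs_satisfy_RBS} and consists of surjunctive IRSs by Theorem \ref{thm:RBS_implies_surj}) admits an \emph{algorithmic outer approximation}. This is the technical content of Theorem \ref{thm:equiv_cond_to_RBS_via_cont_obs} (recasting RBS as inequalities \eqref{eq:cont_condition} for continuous finite-alphabet observables), Theorem \ref{thm:outer_approximation} and Fact \ref{fact:rational_outer_approximation} (the computable rational polytopes $P\RBS(B,n)$ whose intersection cuts out exactly $\IRS_\RBS(\cF)$, which in particular gives the weak* closedness you were taking for granted), and Theorem \ref{thm:algorithmic_outer_approximation} (an algorithm approximating $\val_\RBS(\cT)$ from above). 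If every RBS IRS were cosofic, then $\val_\sof(\cT)=\val_\RBS(\cT)$ for every subgroup test, and combining the outer approximation with the inner approximation of $\val_\sof$ (Fact \ref{fact:inner}) would let one compute $\val_\sof(\cT)$ to any accuracy, contradicting Fact \ref{fact:sof_val_is_RE_hard}. Hence some IRS satisfies RBS but is not cosofic, and such an IRS is surjunctive --- without ever exhibiting it. If you want to salvage your write-up, the piece to develop is not an entropy computation for a specific IRS but the outer-approximation machinery for the RBS class.
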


In the rest of this introduction, we provide more detailed definitions of cosoficity and surjunctivity, as well as describe our proof technique. 

\subsection{Cosoficity and Surjunctivity  of Invariant Random Subgroups}\label{sec:IRSs}
 
 The set  $\Sub(\cF)$ consisting of all subgroups of the free group $\cF$ is compact when equipped with the product topology. A \emph{random subgroup} is a Borel probability distribution over $\Sub(\cF)$, and we denote the collection of all of them by $\Prob(\Sub(\cF))$.\footnote{It is often the case that a random subgroup is a random variable with values in subgroups of $\cF$ that  is distributed according to some law. We do not use this perspective at all along the paper, and a random subgroup (and later random subset) is the distribution law itself.} The random subgroups can be equipped with the weak* topology,
and with this topology,  
this space is compact.
The conjugation action $w.K=wKw^{-1}$ of $\cF$ on its subgroups extends naturally to random subgroups.\footnote{Because $H$ will be used throughout this paper to denote entropy (of various kinds), we avoid using this notation for subgroups and use $K$ for subgroups instead.}
\begin{defn}[Invariant Random Subgroups]\label{defn:IRS}
    An \emph{invariant random subgroup} (IRS) of $\cF$ is a random subgroup $\pi\in \Prob(\Sub(\cF))$ such that $w.\pi=\pi$ for every $w\in \cF$. The set of all IRSs of $\cF$ is denoted by $\IRS(\cF)$.
\end{defn}

For us there will be two main examples of IRSs. First, for every normal subgroup $N\trianglelefteq \cF$, the Dirac measure ${\bf 1}_N$ is an IRS. In addition, whenever $\cF$ acts on a finite set $\Omega$, it induces a \emph{finitely described} IRS by choosing a uniformly random stabilizer of this action; namely, letting $\Stab(x)=\{w\in \cF\mid w.x=x\}$, the IRS associated with $\cF\cc \Omega$  is 
\begin{equation}\label{eq:fin_desc_IRS}
\pi= \frac{1}{|\Omega|}\sum_{x\in \Omega}{\bf 1}_{\Stab(x)}.    
\end{equation}
\begin{defn}[Soficity and Cosoficity]\label{defn:soficity_and_cosoficity}
    An IRS $\pi$ is \emph{cosofic} if it is in the weak* closure of the finitely described IRSs, and we denote the set of these by $\IRS_\sof(\cF)$.
    A finitely generated group $\Gamma$ is \emph{sofic} if it is isomorphic to $\nicefrac{\cF}{N}$ with $\cF$ a finitely generated free group, $N\trianglelefteq \cF$, and ${\bf 1}_N$ a cosofic IRS of $\cF$.
\end{defn}

The $\Sigma$-colorings of a group $\Gamma$ can be equipped with the product topology. In addition, $\Gamma$ 
acts naturally  on $\Sigma^\Gamma$ by shifts; namely for $\gamma\in \Gamma$ and $c\colon \Gamma\to \Sigma$,  $\gamma.c(x)=c(\gamma^{-1}x)$. With these two observations, a cellular automaton is a recoloring map $\Phi\colon \Sigma^\Gamma\to \Sigma^\Gamma$ which is continuous and commutes with the group action, i.e., $\gamma.\Phi (c)=\Phi(\gamma.c)$ for every $\gamma\in \Gamma$ --- this property is often called \emph{conjugacy invariance} or $\Gamma$-\emph{equivariance}. We now generalize this to IRSs. 
Given a subgroup  $K \le \cF$, a coloring $c \colon \cF\to \Sigma$ is $K$-invariant if $k.c=c$ for all $k\in K$ --- this data can also be given as a $\Sigma$-coloring of the right $K$-cosets $_K\setminus^\cF=\{Kx\mid x\in \cF\}$. 
Let   
$\Sub(\cF,\Sigma)$ be the set of all $\Sigma$-colorings of  $K$-cosets of $\cF$ for any subgroup $K\leq \cF$;  namely, it consists of all pairs $(K,c)$ where $K$ is a subgroup of $\cF$ and $c\colon \cF\to \Sigma$ is $K$-invariant.
As $\Sub(\cF,\Sigma)\subseteq \Sub(\cF)\times \Sigma^\cF$, it can be equipped with the product topology. 
In addition, the diagonal action $w.(K,c) = (wKw^{-1}, w.c)$ of $\cF$ on $\Sub(\cF)\times \Sigma^\cF$ preserves $\Sub(\cF,\Sigma)$ ---  as, $c$ being $K$-invariant implies $w.c$ is $wKw^{-1}$-invariant.
  For $\pi$ an IRS of $\cF$, let ${\rm supp}(\pi)\subseteq \Sub(\cF)$ be its support --- i.e., subgroups whose neighborhoods all have positive $\pi$-measure. Define the \emph{domain of $\pi$}, $\dom(\pi)\subseteq \Sub(\cF,\Sigma)$, to consist of all pairs $(K,c)$ where $K\in {\rm supp}(\pi)$    --- namely, all colorings of  right $K$-cosets of subgroups that $\pi$ can ``actually sample''. Since $\pi$ is an IRS,  $\dom(\pi)$ is preserved under the action of $\cF$. Finally, let $P\colon \Sub(\cF,\Sigma)\to \Sub(\cF)$ be the projection to the first coordinate. 
\begin{defn}[Cellular automaton on an IRS]\label{defn:cellular_automaton_on_IRS}
  A function $\Phi\colon \dom(\pi)  \to \dom(\pi)$ is a cellular automaton on $\pi$ (with color palette $\Sigma$) if it is continuous, conjugate invariant (i.e., $w\Phi w^{-1}(K,c)=\Phi(K,c)$ for every $(K,c)\in \dom(\pi)$ and $w\in \cF$) and $P\circ \Phi=P$ (i.e., the $\Phi$-image of $(K,c)$ must be $(K,c')$, so $\Phi$ indeed recolors the right cosets of $K$  for every $K$ in the support of $\pi$).
\end{defn}

\begin{defn}[Surjunctive IRS]\label{defn:surjunctive}
    A $\pi\in \IRS(\cF)$ is \emph{surjunctive} if every (almost everywhere) injective $\pi$-cellular automaton must be surjective.
\end{defn}

\begin{rem}
    A  cellular automaton on an IRS $\pi$ is a generalization of those defined over finitely generated groups (when taking $\pi={\bf 1}_N$ with $\Gamma\cong \cF/N$). In addition,  ${\bf 1}_N$ is surjunctive as in Definition \ref{defn:surjunctive} if and only if $\nicefrac{\cF}{N}$ is a surjunctive group. 
\end{rem}
\begin{center}
    The reader can now parse Theorem \ref{thm:main}.
\end{center}
\ 
\\

\subsection{Proof technique}
Describing our proof requires three steps:
\begin{enumerate}
    \item First, we need to recall the argument structure of \cites{BCLV_subgroup_tests,Tailored_MIPRE}, where non cosofic IRSs were shown to exist.
    \item Then, we need to describe Manzoor's recent work \cite{manzoor2025invariant}, in which he used a certain amplification of the above argument structure to deduce the existence of non cosofic IRSs which satisfy L\"uck's determinant conjecture \cite{luck2002l2}.
    \item Finally, to implement the appropriate amplification a la Manzoor, we use an IRS version of a sufficient condition to surjunctivity due to Seward \cite{seward2019krieger}.
\end{enumerate}

\begin{figure}[!httb]
\centering
\begin{circuitikz}[scale=0.4]
\tikzstyle{every node}=[font=\tiny]
\draw  (16.25,10.75) circle (6.25cm);

\draw  (16.25,10.75) circle (3.75cm);
\draw [->, >=Stealth, dashed] (16.25,19.5) -- (16.25,17.25);
\draw [->, >=Stealth, dashed] (25,10.75) -- (22.5,10.75);
\draw [->, >=Stealth, dashed] (20,18.25) -- (18.75,16.5);
\draw [->, >=Stealth, dashed] (23.75,14.5) -- (22,13.25);
\draw [->, >=Stealth, dashed] (22.5,17) -- (20.75,15.25);
\draw [->, >=Stealth, dashed] (12.5,18.25) -- (13.75,16.5);
\draw [->, >=Stealth, dashed] (7.5,10.75) -- (10,10.75);
\draw [->, >=Stealth, dashed] (8.75,7) -- (10.5,8.25);
\draw [->, >=Stealth, dashed] (16.25,2) -- (16.25,4.25);
\draw [->, >=Stealth, dashed] (12.5,3.25) -- (13.75,5);
\draw [->, >=Stealth, dashed] (20,3.25) -- (18.75,5);
\draw [->, >=Stealth, dashed] (10,4.5) -- (11.75,6.25);
\draw [->, >=Stealth, dashed] (22.5,4.5) -- (20.75,6.25);
\draw [->, >=Stealth, dashed] (23.75,7) -- (22,8.25);
\draw [->, >=Stealth, dashed] (8.75,14.5) -- (10.5,13.25);
\draw [->, >=Stealth, dashed] (10,17) -- (11.75,15.25);
\node [ color={rgb,255:red,255; green,0; blue,0}] at (16,20) {Outer approximation};
\node  at (16.25,16) {All IRSs};
\node at (16.25,10.75) {Cosofic IRSs};
\draw [->, >=Stealth, dashed] (16.25,13.25) -- (16.25,14.5);
\draw [->, >=Stealth, dashed] (13.75,10.75) -- (12.5,10.75);
\draw [->, >=Stealth, dashed] (16.25,8.25) -- (16.25,7);
\draw [->, >=Stealth, dashed] (18.75,10.75) -- (20,10.75);
\draw [->, >=Stealth, dashed] (17.5,12) -- (19,13.25);
\draw [->, >=Stealth, dashed] (15,12) -- (13.5,13.25);
\draw [->, >=Stealth, dashed] (17.5,9.5) -- (19,8.25);
\draw [->, >=Stealth, dashed] (15,9.5) -- (13.5,8.25);
\node [color={rgb,255:red,255; green,0; blue,0}] at (16.25,13) {Inner};
\node [color={rgb,255:red,255; green,0; blue,0}] at (16.25,12) {approximation};
\end{circuitikz}

\caption{Outer and inner algorithmic approximations, together with an incomputability result, imply that the two sets are different.}\label{fig:IRSs}
\end{figure}
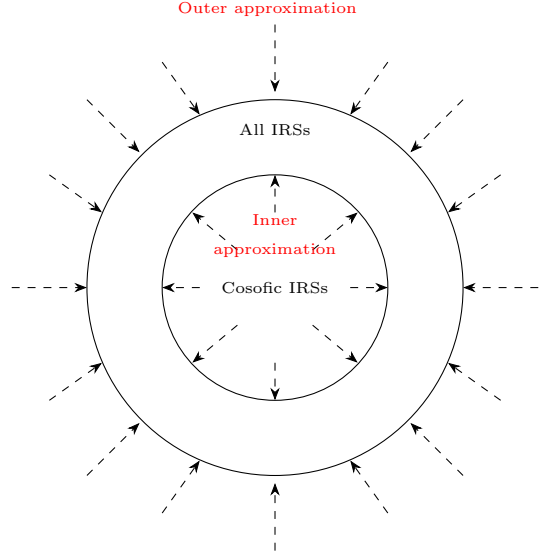

\subsubsection{Subgroup Tests and non cosofic IRSs}\label{sec:subgroup_tests}
A \emph{challenge} is a continuous function $D$ from $\Sub(\cF)$ to $\{0,1\}$. A \emph{subgroup test} $\cT$ is a rational probability distribution over finitely many challenges, namely it consists of challenges $D_1,...,D_m$ and $\mu\in \Prob([m])\cap \QQ^m$. The \emph{value} of a random subgroup $\pi$ versus a subgroup test $\cT$   is 
\begin{equation}\label{eq:value_of_a_subgroup_test}
\val(\cT,\pi)=\Ex_{K\sim \pi}\Ex_{i\sim \mu}[D_i(K)].\end{equation}
It is immediate that $\val(\cT,\cdot)$ is a continuous linear functional on the random subgroups. The sofic value $\val_\sof(\cT)$ of a subgroup test $\cT$  is the supremum of $\val(\cT,\pi)$ over the finitely described IRSs, and its ergodic value $\val_\erg(\cT)$ is the supremum of $\val(\cT,\pi)$ over all  IRSs. By the continuity of $\val(\cT,\cdot)$, if  every IRS is cosofic, then $\val_\sof(\cT)=\val_\erg(\cT)$. 
Given $\cT$ and a finitely described IRS $\pi$, one can  calculate $\val(\cT,\pi)$. As there are countably many finitely described IRSs, one can list them $\{\pi_i\}_{i=1}^\infty$ and evaluate $\val(\cT,\pi_i)$ sequentially. The limsup of this sequence is $\val_\sof(\cT)$, and thus:
\begin{fact}\label{fact:inner}
There is an algorithm that approximates $\val_\sof(\cT)$, the sofic value of a subgroup test $\cT$, from below.     
\end{fact}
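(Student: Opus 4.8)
The plan is to turn the enumeration sketched just above \eqref{eq:value_of_a_subgroup_test} into a bona fide algorithm and to verify that every term it produces is an effectively computable rational; the argument is elementary, and I would organize it in three steps.

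\emph{Step 1: effectively enumerate the finitely described IRSs.} Fix a free basis $S$ of $\cF$ with $|S|=r$. By \eqref{eq:fin_desc_IRS} every finitely described IRS arises from an action $\cF\cc\Omega$ on a finite set, and such an action with $|\Omega|=n$ is nothing but an $r$-tuple of permutations $(\sigma_s)_{s\in S}\in\Sym(n)^r$, namely the images of the generators. For each $n$ there are $(n!)^r$ such tuples, so by letting $n$ run over $1,2,3,\dots$, listing for each $n$ all tuples in $\Sym(n)^r$, and recording the associated IRS via \eqref{eq:fin_desc_IRS}, one obtains a computable enumeration $\pi_1,\pi_2,\dots$ of all finitely described IRSs (with harmless repetitions). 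Nothing beyond the definition of a finitely described IRS is used here.

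\emph{Step 2: compute $\val(\cT,\pi_i)$.} Write $\cT=(D_1,\dots,D_m;\mu)$ with $\mu\in\QQ^m$, and suppose $\pi_i$ comes from the action $\cF\cc[n]$ given by $(\sigma_s)_{s\in S}$. Combining \eqref{eq:value_of_a_subgroup_test} and \eqref{eq:fin_desc_IRS},
\[
\val(\cT,\pi_i)=\frac1n\sum_{x=1}^{n}\sum_{j=1}^{m}\mu(j)\,D_j\big(\Stab(x)\big),
\]
so it remains to evaluate each challenge $D_j$ on each stabilizer subgroup $\Stab(x)\le\cF$. This is the one point that needs care, and is really the only content of the proof. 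Since $\Sub(\cF)$ is compact and totally disconnected while $D_j$ takes values in the discrete two-point set, $D_j$ is locally constant; hence there is a radius $R_j$ such that $D_j(K)$ depends only on which words of length at most $R_j$ lie in $K$. A subgroup test, being a finite input, is therefore presented together with these radii and the finite truth tables expressing each $D_j$ as a function of $K\cap\{w\in\cF:|w|\le R_j\}$ — this is the only reasonable effective encoding of a challenge, and is precisely what makes the statement meaningful. Granting this, for each of the finitely many words $w$ with $|w|\le R_j$ one computes the permutation of $[n]$ that $w$ induces under $(\sigma_s)$ and checks whether it fixes $x$; this determines $\Stab(x)\cap\{w:|w|\le R_j\}$, hence $D_j(\Stab(x))\in\{0,1\}$, and therefore the rational number $\val(\cT,\pi_i)$.

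\emph{Step 3: assemble and conclude.} The algorithm outputs, at stage $k$, the rational $a_k:=\max_{1\le i\le k}\val(\cT,\pi_i)$. The sequence $(a_k)$ is nondecreasing; each $a_k\le\val_\sof(\cT)$ since $\val_\sof(\cT)$ is by definition the supremum of $\val(\cT,\pi)$ over the finitely described IRSs; and $\sup_k a_k=\sup_i\val(\cT,\pi_i)=\val_\sof(\cT)$ because $\{\pi_i\}$ exhausts those IRSs. Hence $a_k$ converges monotonically upward to $\val_\sof(\cT)$, i.e.\ the algorithm approximates the sofic value from below. Note that we never need the supremum to be attained along the enumeration, only approached, which is why this argument yields a one-sided approximation rather than an exact computation — and, as Figure~\ref{fig:IRSs} indicates, a one-sided approximation is exactly what the overall strategy requires. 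The main (and essentially only) obstacle is the locality-radius observation in Step 2: once one adopts the convention that a challenge is presented as a radius together with a truth table on the corresponding ball, there is no further difficulty.
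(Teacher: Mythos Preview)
Your proof is correct and follows essentially the same approach as the paper, which dispatches the fact in the single sentence preceding it: enumerate the finitely described IRSs, compute $\val(\cT,\pi_i)$ on each (using that challenges are locally constant, as the paper makes explicit in Section~\ref{sec:proof_of_main}), and take the running maximum. Your three steps simply unpack this sketch carefully; the only addition is the explicit discussion in Step~2 of how a challenge is effectively encoded via its radius and truth table, which the paper leaves implicit here but spells out later when introducing the mutual continuity domain $\Delta$.
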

On the other hand, in \cite{BCLV_subgroup_tests} we provided a downwards sequence of  polytopes (with finitely many faces) $\{\cQ_r\}_{r=1}^\infty$ in $\Prob(\Sub(\cF))$ whose intersection is $\IRS(\cF)$.\footnote{These polytopes are the pseudo IRSs with respect to finite subsets. The definition is recalled in Section \ref{sec:outer_approximation}.} Using linear programming, one can evaluate $\beta_r=\displaystyle{\sup_{\pi \in \cQ_r} }\val(\cT,\pi)$  for each $r$ in finite time. Since $\displaystyle{\liminf_{r\to \infty}}\beta_r=\val_\erg(\cT)$, we have:
\begin{fact}\label{fact:outer}
    There is an algorithm that approximates $\val_\erg(\cT)$, the ergodic value of a subgroup test $\cT$, from above.\footnote{Algorithms like this one are commonly called ``NPA type hierarchies'', due to \cite{navascues2008convergent}.}    
\end{fact}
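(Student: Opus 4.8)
The plan is to turn the three sentences preceding the statement into a rigorous argument, the engine being the nested outer polytopes of \cite{BCLV_subgroup_tests}. Recall from there that for each $r\in\NN$ there is a set $\cQ_r\subseteq\Prob(\Sub(\cF))$ --- the ``pseudo-IRSs with respect to the ball of radius $r$'' --- which, concretely, consists of those $\pi$ whose radius-$r$ local statistics (the distribution they induce over the finitely many isomorphism types of radius-$r$ balls in Schreier graphs ${}_K\setminus^{\cF}$) lie in a finite-dimensional rational polytope $P_r$ that one writes down explicitly from finitely many consistency and conjugation-invariance equalities with integer coefficients. The sets $\cQ_r$ are compact, convex, nested ($\cQ_{r+1}\subseteq\cQ_r$), and $\bigcap_{r}\cQ_r=\IRS(\cF)$; I take all of this as given.

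First I would record the locality of challenges. A challenge $D$ is a continuous map $\Sub(\cF)\to\{0,1\}$, hence locally constant on the compact space $\Sub(\cF)$, so there is a radius $\rho(D)$ such that $D(K)$ is a function only of the radius-$\rho(D)$ ball of ${}_K\setminus^{\cF}$ at the trivial coset. For a subgroup test $\cT=(D_1,\dots,D_m,\mu)$, put $r_0:=\max_i\rho(D_i)$, which we may assume is part of (or computable from) the finite description of $\cT$. Then $\val(\cT,\cdot)=\Ex_{K\sim\pi}\Ex_{i\sim\mu}[D_i(K)]$ is a \emph{continuous} linear functional which factors through the projection of $\Prob(\Sub(\cF))$ onto the finite-dimensional space $\Delta_{r_0}$ of distributions over radius-$r_0$ Schreier-ball types, and the induced functional $\overline{\val}(\cT,\cdot)$ on $\Delta_{r_0}$ has rational coefficients (an explicit $\{0,1\}$-indicator over types for each $D_i$, weighted by the rational $\mu(i)$).

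Next, for $r\ge r_0$, I would observe that $\beta_r:=\sup_{\pi\in\cQ_r}\val(\cT,\pi)$ equals the maximum of $\overline{\val}(\cT,\cdot)$ over the image of $P_r$ under the linear, rational marginalization $\Delta_r\to\Delta_{r_0}$ --- a linear program with rational data over a bounded rational polytope whose facet description is computable from $r$ and $\cT$. Such a program has a rational optimum, attained at a vertex, and is solvable exactly in finite time (Fourier--Motzkin elimination, or an exact simplex run). The algorithm is then: on input $\cT$, compute $r_0$ and output the sequence $\beta_{r_0},\beta_{r_0+1},\beta_{r_0+2},\dots$.

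Finally I would verify that this sequence decreases to $\val_\erg(\cT)$. Monotonicity $\beta_{r+1}\le\beta_r$ is immediate from $\cQ_{r+1}\subseteq\cQ_r$, and $\beta_r\ge\val_\erg(\cT)$ because $\IRS(\cF)\subseteq\cQ_r$; hence $\beta_\infty:=\lim_r\beta_r$ exists and is $\ge\val_\erg(\cT)$. For the matching bound, suppose $\beta_\infty>t>\val_\erg(\cT)$ for some rational $t$. The sets $C_r:=\cQ_r\cap\{\pi:\val(\cT,\pi)\ge t\}$ are closed in the compact space $\Prob(\Sub(\cF))$, nonempty (as $\beta_r\ge\beta_\infty>t$ and the supremum defining $\beta_r$ is attained on the compact $\cQ_r$ by continuity of $\val(\cT,\cdot)$), and nested; by the finite-intersection property $\bigcap_rC_r\neq\emptyset$, while $\bigcap_rC_r\subseteq\IRS(\cF)\cap\{\val(\cT,\cdot)\ge t\}=\emptyset$ --- a contradiction. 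So $\beta_\infty=\val_\erg(\cT)$, and the computable, non-increasing, rational sequence $(\beta_r)_{r\ge r_0}$ is exactly an approximation of $\val_\erg(\cT)$ from above. The substantive ingredient here is the theorem of \cite{BCLV_subgroup_tests} that $\bigcap_r\cQ_r=\IRS(\cF)$, used in both directions of this paragraph; everything else is bookkeeping. The one point deserving care in a full write-up is the reduction of $\beta_r$ to a \emph{finite} linear program: one must check that marginalizing $\cQ_r$ (a subset of the infinite-dimensional $\Prob(\Sub(\cF))$) down to $\Delta_{r_0}$ commutes with taking the supremum of $\val(\cT,\cdot)$ --- which it does precisely because, by the locality step, $\val(\cT,\cdot)$ is measurable with respect to the radius-$r_0$ marginal --- and that the resulting polytope is presented by finitely many rational inequalities of bounded bit-size, so an exact LP solver terminates. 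I do not anticipate any obstacle beyond this.
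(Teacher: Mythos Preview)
Your proposal is correct and follows precisely the approach the paper sketches in the sentences preceding the statement (and later carries out in detail for the analogous $P\RBS$ polytopes in Section~\ref{sec:outer_approximation} and Theorem~\ref{thm:algorithmic_outer_approximation}): reduce $\val(\cT,\cdot)$ to a rational linear functional on a finite-dimensional rational polytope via locality of the challenges, solve the resulting LP exactly for each $r$, and use nestedness plus $\bigcap_r\cQ_r=\IRS(\cF)$ with a compactness/finite-intersection argument to show $\beta_r\searrow\val_\erg(\cT)$. Your write-up is in fact more careful than the paper's one-line justification (you prove monotone convergence rather than merely asserting the $\liminf$), and your caveat about rationality of the polytope data is exactly the issue the paper later flags in Fact~\ref{fact:rational_outer_approximation} for the RBS case---here it is a non-issue because, as you note and as the paper confirms, the $P\IRS(B)$ polytopes from \cite{BCLV_subgroup_tests} have integral defining data.
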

Combining Facts \ref{fact:inner} and \ref{fact:outer}, if $\val_\sof(\cT)=\val_\erg(\cT)$ for every $\cT$, then $\val_\sof (\cT)$ can be calculated up to any predetermined accuracy.  A theorem in \cite{BCLV_subgroup_tests} states that:
\begin{fact}\label{fact:sof_val_is_RE_hard}
    The following task is as hard as the Halting Problem: Given a subgroup test $\cT$ and a positive rational $\eps$, output a rational $v$ such that $|\val_\sof(\cT)-v|\leq \eps$.
\end{fact}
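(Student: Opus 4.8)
The plan is to prove this by exhibiting a computable many‑one reduction from the Halting Problem, following the \emph{compression $+$ recursion} paradigm that underlies $\MIP^{*}=\RE$, transported into the language of subgroup tests. It suffices to construct a computable map $M\mapsto \cT_{M}$ from Turing machines to subgroup tests, together with a fixed gap: $\val_\sof(\cT_{M})=1$ whenever $M$ halts (on empty input, say), and $\val_\sof(\cT_{M})\le \tfrac12$ whenever $M$ does not halt. Indeed, if there were an algorithm that on input $(\cT,\eps)$ outputs a rational $v$ with $|\val_\sof(\cT)-v|\le \eps$, then running it on $(\cT_{M},\tfrac14)$ would decide whether $M$ halts, a contradiction. (The matching ``no harder than Halting'' direction is exactly Fact~\ref{fact:inner}: $\val_\sof(\cT)$ is approximable from below, hence computable relative to the Halting oracle.)

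The construction of $\cT_{M}$ goes through a \emph{compression} operation on subgroup tests. Building on the structural results of \cites{BCLV_subgroup_tests,Tailored_MIPRE}, one first normalizes subgroup tests into a convenient ``tailored/oracularized'' form, in which each challenge $D_i$ decomposes into a bounded number of elementary checks (word equalities and membership tests of the form ``is this word in the sampled subgroup?''), and then establishes two reductions. First, \textbf{question reduction (introspection):} a test that samples a challenge index $i\sim\mu$ from a set of size $m$ is replaced by one that samples only $O(\log m)$ bits and then forces the random subgroup to ``introspect'', i.e.\ to produce inside $\cF$ an encoding of the challenge it is being asked, with consistency of this self‑sampling enforced by equality checks on words; this exploits that a subgroup of $\cF$ (via the cosets it determines) can carry exponentially much information relative to the test's description length. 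Second, \textbf{answer reduction (a group‑theoretic PCP):} the now‑long consistency conditions that the introspected data must satisfy are re‑encoded so that the verifier checks only a constant amount of information, via linearity‑ and low‑degree‑type tests phrased as subgroup‑membership checks; soundness here rests on a rigidity (stability) statement forcing any finitely described IRS that nearly passes to arise, up to small error, from a sofic approximation of a prescribed structured action, so that the encoded ``proof'' can be decoded. Composing the two reductions yields a computable map $\mathrm{Compress}$ sending a subgroup test $\cT$ with time budget $t$ to a subgroup test $\cT'$ of comparable description size but time budget $\approx 2^{t}$, which is gap‑preserving ($\val_\sof(\cT')=1$ if $\val_\sof(\cT)=1$, and $\val_\sof(\cT')\le\tfrac12$ if $\val_\sof(\cT)\le\tfrac12$) and — crucially — has completeness witnessed by \emph{finitely described} IRSs, which is exactly why one works with $\val_\sof$ rather than $\val_\erg$.

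Finally, close the loop with Kleene's recursion theorem. Define, uniformly in a machine description, a subgroup test that first simulates $M$ for a number of steps governed by the recursion depth, accepts unconditionally if $M$ has halted, and otherwise applies $\mathrm{Compress}$ to a copy of itself; a fixed point $\cT_{M}$ of this transformation then effectively simulates $M$ for unboundedly many steps precisely when $M$ never halts. Unwinding the gap guarantees of $\mathrm{Compress}$ gives $\val_\sof(\cT_{M})=1$ iff $M$ does not halt (with the complementary normalization if one prefers the ``halts'' side to have value $1$), which is the desired $\RE$‑hardness.

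The step I expect to be the main obstacle is the answer‑reduction/rigidity module. In the quantum incarnation of $\MIP^{*}=\RE$ this role is played by the quantum low‑degree test together with self‑testing of the Pauli group; in the present setting it must be replaced by a stability/rigidity theorem for IRSs of a free group — forcing a near‑optimal finitely described IRS into a prescribed sofic form — and this rigidity has to survive the PCP composition while all encodings remain computable and all honest strategies remain finitely described. This is the technical heart of \cite{BCLV_subgroup_tests}; the recursion‑theoretic wrapper and the introspection machinery, by contrast, are by now standard.
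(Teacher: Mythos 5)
First, note that the paper you are working from does not prove Fact \ref{fact:sof_val_is_RE_hard} at all: it is imported verbatim as a theorem of \cite{BCLV_subgroup_tests} (with the underlying complexity result coming from \cite{Tailored_MIPRE}). So the only fair comparison is with the proof in those works, and there your proposal diverges in a substantive way. The established proof does \emph{not} run a compression/recursion argument directly on subgroup tests. Instead, the incomputability is proved for \emph{tailored} nonlocal games ($\TMIP$-type compression, recursion theorem, quantum low-degree test and Pauli self-testing all live on the games side, in \cite{Tailored_MIPRE}), and \cite{BCLV_subgroup_tests} supplies a computable, gap-preserving translation of a tailored game into a subgroup test under which the sofic value of the test tracks the relevant (synchronous/permutation-model) value of the game, with completeness witnessed by finitely described IRSs. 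Your plan, by contrast, proposes to rebuild introspection, a ``group-theoretic PCP'' answer reduction, and a rigidity/self-testing theorem natively in the IRS category and then close the loop with Kleene's recursion theorem on subgroup tests themselves.

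That difference is where the genuine gap lies: every load-bearing module of your argument --- the gap-preserving $\mathrm{Compress}$ map for subgroup tests, the introspection step, and above all the ``stability/rigidity theorem forcing a near-optimal finitely described IRS into a prescribed sofic form'' --- is asserted rather than constructed, and none of these is available off the shelf in the subgroup-test setting; indeed the absence of an IRS analogue of Pauli self-testing is precisely why the actual proof routes through tailored games rather than compressing subgroup tests directly. You flag this yourself as ``the main obstacle,'' but it is not an obstacle inside an otherwise complete proof; it \emph{is} the theorem. As written, the proposal is a plausible research program whose correctness would have to be certified by essentially reproving the main results of \cite{Tailored_MIPRE} and \cite{BCLV_subgroup_tests} in a new category, so it cannot be accepted as a proof of Fact \ref{fact:sof_val_is_RE_hard}. (Two smaller points: the outer wrapper is fine --- the reduction-from-Halting template plus Fact \ref{fact:inner} for the ``no harder than Halting'' direction is exactly right --- but your completeness normalization flips between the opening paragraph, where $\val_\sof(\cT_M)=1$ when $M$ halts, and the recursion-theorem paragraph, where $\val_\sof(\cT_M)=1$ iff $M$ does not halt; pick one and keep it consistent, since the hard-coded acceptance-on-halting step dictates the former.)
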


Combining all of the above, one deduces that there must be a subgroup test $\cT$ satisfying $\val_\sof(\cT)< \val_\erg(\cT)$, which in turn implies non cosofic IRSs exist. See Figure \ref{fig:IRSs} for a visualization of this setup.

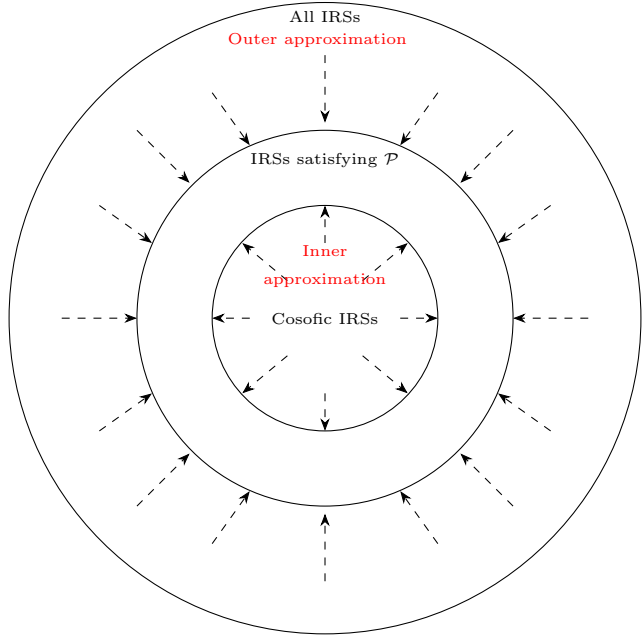
\begin{figure}[!httb]
\centering
\begin{circuitikz}[scale=0.4]
\tikzstyle{every node}=[font=\tiny]
\draw  (16.25,10.75) circle (10.5cm);
\draw  (16.25,10.75) circle (6.25cm);

\draw  (16.25,10.75) circle (3.75cm);
\draw [->, >=Stealth, dashed] (16.25,19.5) -- (16.25,17.25);
\draw [->, >=Stealth, dashed] (25,10.75) -- (22.5,10.75);
\draw [->, >=Stealth, dashed] (20,18.25) -- (18.75,16.5);
\draw [->, >=Stealth, dashed] (23.75,14.5) -- (22,13.25);
\draw [->, >=Stealth, dashed] (22.5,17) -- (20.75,15.25);
\draw [->, >=Stealth, dashed] (12.5,18.25) -- (13.75,16.5);
\draw [->, >=Stealth, dashed] (7.5,10.75) -- (10,10.75);
\draw [->, >=Stealth, dashed] (8.75,7) -- (10.5,8.25);
\draw [->, >=Stealth, dashed] (16.25,2) -- (16.25,4.25);
\draw [->, >=Stealth, dashed] (12.5,3.25) -- (13.75,5);
\draw [->, >=Stealth, dashed] (20,3.25) -- (18.75,5);
\draw [->, >=Stealth, dashed] (10,4.5) -- (11.75,6.25);
\draw [->, >=Stealth, dashed] (22.5,4.5) -- (20.75,6.25);
\draw [->, >=Stealth, dashed] (23.75,7) -- (22,8.25);
\draw [->, >=Stealth, dashed] (8.75,14.5) -- (10.5,13.25);
\draw [->, >=Stealth, dashed] (10,17) -- (11.75,15.25);
\node [ color={rgb,255:red,255; green,0; blue,0}] at (16,20) {Outer approximation};
\node  at (16.25,16) {IRSs satisfying $\cP$};
\node  at (16.25,20.8) {All IRSs};
\node at (16.25,10.75) {Cosofic IRSs};
\draw [->, >=Stealth, dashed] (16.25,13.25) -- (16.25,14.5);
\draw [->, >=Stealth, dashed] (13.75,10.75) -- (12.5,10.75);
\draw [->, >=Stealth, dashed] (16.25,8.25) -- (16.25,7);
\draw [->, >=Stealth, dashed] (18.75,10.75) -- (20,10.75);
\draw [->, >=Stealth, dashed] (17.5,12) -- (19,13.25);
\draw [->, >=Stealth, dashed] (15,12) -- (13.5,13.25);
\draw [->, >=Stealth, dashed] (17.5,9.5) -- (19,8.25);
\draw [->, >=Stealth, dashed] (15,9.5) -- (13.5,8.25);
\node [color={rgb,255:red,255; green,0; blue,0}] at (16.25,13) {Inner};
\node [color={rgb,255:red,255; green,0; blue,0}] at (16.25,12) {approximation};
\end{circuitikz}

\caption{Outer  algorithmic approximation for any set of IRSs which satisfy a certain property $\cP$, together with the incomputability result for $\val_\sof(\cT)$ from Fact \ref{fact:sof_val_is_RE_hard}, imply that  there are non cosofic IRSs with this property $\cP$.}\label{fig:IRSs2}
\end{figure}

\subsubsection{Manzoor's amplification of the complexity theoretic framework}
Let $\cP$ be some property satisfied by all cosofic IRSs. Let $\IRS_\cP(\cF)$ be the set of all IRSs that satisfy this property. Then, this set sits in between the cosofic IRSs and all IRSs --- see Figure \ref{fig:IRSs2}. If this set has a natural outer approximation, one deduces immediately from Facts \ref{fact:sof_val_is_RE_hard} and \ref{fact:inner} that there \textbf{exist} non cosofic IRSs \textbf{which satisfy} this property. This is how Manzoor \cite{manzoor2025invariant} found a non cosofic IRS which satisfies L\"uck's determinant conjecture \cite{luck2002l2}. Hence, it is natural to choose $\IRS_\surj(\cF)$ to be the set of all surjunctive IRSs, and find an outer approximation of this set using polytopes. Unfortunately, we do not know how to do that. But, we can find a \textbf{subset} of $\IRS_\surj(\cF)$  that has an outer approximation, which will be sufficient to deduce Theorem \ref{thm:main}. 
\begin{rem}
    Manzoor suggests in  \cite{manzoor2025invariant} to study both outer  and inner approximations for $\IRS_\cP(\cF)$. We believe that approximating $\val_\erg(\cT)$ should be impossible (similar to $\val_\sof(\cT)$ in Fact \ref{fact:sof_val_is_RE_hard}; the non-local games analogue of this was recently proved in \cite{lin2025mipco}), and if true, inner approximations should yield IRSs that do not satisfy $\cP$. Specifically, we neither know whether $\IRS_\surj(\cF)$ has   an outer approximation, nor an inner approximation, but if for some reason it has an inner approximation, together with an inapproximability result for $\val_\erg(\cT)$ one will deduce the existence of a non-surjunctive IRS, which would be very exciting. 
\end{rem}
\begin{rem}
    Our Theorem \ref{thm:main} can probably be upgraded to the exsitence of a surjunctive non cohyperlinear  IRS (which is stronger than being non cosofic)   using the methods of \cite{manzoor2025there,manzoor2025invariant}. In that case, the combination of  the above strengthening of Theorem \ref{thm:main} together with Theorem 1.3 in \cite{manzoor2025invariant} would provide a non cohyperlinear IRS that satisfies both Gottschalk's conjecture and L\"ucks determinant conjecture simultaneously. This shows another beautiful aspect of Manzoor's approach, as it has a cumulative nature.  
\end{rem}

\subsubsection{Seward's Rokhlin entropy criterion for surjunctivity}

In \cite{seward2019krieger}, Seward defined what it means for a group $\Gamma$ to satisfy the \emph{maximal Rokhlin entropy on Bernoulli Shifts criterion} (RBS), and proved that a $\Gamma$ satisfying RBS must be surjunctive. Here, we suggest an IRS analogue of RBS, and prove that indeed every IRS which satisfies RBS is also surjunctive (Theorem \ref{thm:RBS_implies_surj}). Similarly, every cosofic IRS satisfies RBS (Corollary \ref{cor:cosofic_IRSs_satisfy_RBS}). In summary,  if we let $\IRS_{\RBS}(\cF)$ be the collection of all IRSs satisfying RBS, then we have
\begin{equation}\label{eq:soficIRS_in_RBSIRS_in_surjunctiveIRS}
\IRS_\sof(\cF)\subseteq \IRS_\RBS(\cF)\subseteq \IRS_\surj (\cF).
\end{equation}
To implement Manzoor's approach, we need to find an outer approximation to $\IRS_\RBS(\cF)$. Given such an outer approximation, by the previous discussion,  a non cosofic IRS which satisfies RBS exists, and in turn it will be surjunctive. In Section \ref{sec:outer_approximation} we provide the required outer approximation. 

Let us demystify the RBS criterion a bit. To that end, we  describe first the modern approach to Gromov and Weiss' result that ``sofic groups are surjunctive'' (cf.\ Section 5 of \cite{bowen2018brief}).
Following the first author's work in the measurable setup \cite{bowen2010measure}, Kerr and Li \cite{MR2854085} defined the notion of \emph{topological sofic entropy} of  an action $\Gamma\cc X$  of a sofic group on a compact topological space.\footnote{Actually, the sofic entropy is defined with respect to a specific sofic approximation of the group $\Gamma$. But, as this explanation aims to provide only ``the ideas'', we ignore this intricacy.} This quantity satisfies the following three facts:
$(1)$ As discussed before, for every finite set $\Sigma$ one has the shift action of $\Gamma$ on $\Sigma^\Gamma$ (which is a compact space), and the sofic entropy of this action is $\log|\Sigma|$. 
$(2)$ In addition, for every closed  strict subset $X\subsetneq \Sigma^\Gamma$ which is $\Gamma$-invariant, the action $\Gamma\cc X$ has sofic entropy which is strictly smaller than $\log |\Sigma|$.
    $(3)$ Moreover, sofic entropy is a \emph{topological invariant}: Given two actions $\Gamma\cc X$ and $\Gamma \cc Y$, a (topological) \emph{factor map} between them is a continuous $\Phi\colon X\to Y$ which commutes with the group action. In case  a factor map $\Phi$ is injective, the sofic entropy of $\Gamma\cc X$ is smaller or equal than that of $\Gamma\cc Y$.  \quad    
Hence, given a non surjective cellular automaton $\Phi\colon \Sigma^\Gamma\to \Sigma^\Gamma$ (which is a factor map by definition), the action of $\Gamma$  on ${\rm Im}(\Phi)\subseteq \Sigma^\Gamma$ has sofic entropy strictly smaller than $\log|\Sigma|$ by clause $(2)$.  On the other hand, if $\Phi$ is injective, then by clauses $(1)$ and $(3)$ the sofic entropy of this action is at least $\log|\Sigma|$. Hence, an injective cellular automaton on a sofic group must be surjective.  

Seward developed in \cite{seward2019krieger1} a way of associating a new type of entropy, which he termed \emph{Rokhlin entropy}, to a probability measure preserving (p.m.p) action of any finitely generated group $\Gamma$ (not necessarily a sofic one) on a standard probability space $(X,\mu)$, which generalizes the Kolmogorov--Sinai entropy in the amenable case (similar to sofic entropy). By equipping $\Sigma$ with a probability measure $\kappa$, and $\Sigma^\Gamma$ with the product measure $\kappa^\Gamma$, the shift action of $\Gamma$ on $\Sigma^\Gamma$ becomes p.m.p --- this action is known as the  \emph{Bernoulli shift} of $\Gamma$ with base $(\Sigma,\kappa)$.
The measurable setup has the appropriate analogue of a factor map (with continuity replaced by measurability), and the analogue of the above clause $(3)$ is satisfied for it. 
The {RBS criterion} is an analogue of $(1)$, namely, a group $\Gamma$ satisfies this criterion if the Rokhlin entropy of the Bernoulli shift with base $(\Sigma,\kappa)$ is equal to the Shannon entropy of the base $H(\kappa)=-\displaystyle{\sum_{a\in \Sigma}}\kappa(a)\log(\kappa(a))$, which is its maximal possible value --- note that in case $\kappa$ is the uniform distribution $u$, this quantity is exactly $\log|\Sigma|$, as it was in the sofic case. 
Under the assumption of RBS, one can  also recover a  slightly more intricate variant of $(2)$.
Finally, one needs to note that a cellular automaton induces a measurable factor map from $(\Sigma^\Gamma,u^\Gamma)$, and by applying the appropriate version of the above argument (see Corollary 4.1 in \cite{seward2019krieger}),   deduce that a group satisfying RBS must be surjunctive.

\subsection{Structure of the paper}
Section \ref{sec:entropy_theory} is devoted to the basics of entropy theory as well as the more intricate theory of Rokhlin entropy. Section \ref{sec:RBS} defines the maximal Rokhlin entropy of Bernoulli Shifts (RBS) criterion, which was originally defined for groups, in the generalized setup of invariant random subgroups (IRSs). Section \ref{sec:cont_cond} provides an equivalent criterion to RBS which is more amenable to analysis. Section \ref{sec:outer_approximation} provides the desired outer approximation result for IRSs which satisfy RBS. Finally, in Section \ref{sec:proof_of_main} we collect everything and prove Theorem \ref{thm:main}.

\subsection*{Acknowledgments}
We would like to thank Brandon Seward for helpful discussions, and for suggesting to us to use Fact \ref{fact:cor7.3Seward} to prove Theorem \ref{thm:RBS_implies_surj}. We would also like to thank Aareyan Manzoor  for providing us with useful remarks that were incorporated to the final presentation, and in particular the need for a rational approximation as  devised in Fact \ref{fact:rational_outer_approximation}.  
LB is partially supported by NSF Grant DMS--2453399. MC would like to thank the Institute for Advanced Study for the fruitful environment, and is  supported by  NSF 
Grant DMS--2424441.

\section{\textbf{Entropy Theory}}\label{sec:entropy_theory}

 We need some basic facts from classic probability theory, ergodic theory and entropy theory. An expert could skip most of Section \ref{sec:entropy_theory}, referring back to it for notation if necessary. 
We use  Section 2 of \cite{seward2016weak}  as a guide, and the reader can refer there, as well as to the Appendix of \cite{seward2019krieger},
for any missing information.

\subsection{Basic Notions}\label{sec:basic_notions}

    Let $(X,\scrB_X,\mu)$ be a standard probability space. Throughout the paper, we assume $X$ is already equipped with a metric which makes it complete and separable (namely, a Polish space) so that $\scrB_X$ is the Borel sigma-algebra of $X$ with respect to this metric --- the notation $\scrB_X$ for the Borel sigma-algebra of a topological space will be used throughout. We denote by $x\sim \mu$ a variable $x$ sampled according to $\mu$. 
    Given a Borel measurable map  between two standard Borel spaces $\psi\colon (X,\scrB_X)\to (Y,\scrB_Y)$, $Y$ becomes a standard probability space by equipping it with the pushforward measure $\nu=\psi_* \mu$, defined by $\psi_*\mu (B)=\mu(\psi^{-1}(B))$ for every Borel set $B\in \scrB_Y$. By the disintegration of measures theorem (cf.\ Theorem 5.14 in \cite{Einsiedler2011}), for $\nu$-almost every $y\in Y$, there is a Borel probability measure $\mu^\psi_y$ on  $X$ which is supported on $\psi^{-1}(y)$, and for every Borel measurable map $f\colon X\to \mathbb{R}$ we have $\displaystyle{\Ex_{x\sim \mu}[f(x)]=\Ex_{y\sim \nu}\Ex_{x\sim \mu^\psi_y}[f(x)]}$. The measures $\mu^\psi_y$ are the \emph{conditional distributions} $\mu^\psi_y(\cdot)=\displaystyle{\Pro_{x\sim \mu}[x\in \cdot |\psi(x)=y]}$.
    Note that a measurable map $\psi\colon (X,\scrB_X)\to (Y,\scrB_Y)$ as before induces on $X$ a sub sigma-algebra $\psi^{-1}(\scrB_Y)$. Conversely, every sub sigma-algebra $\scrF\subseteq \scrB_X$  induces a factor map $\psi=\psi_\scrF\colon (X,\scrB_X)\to (Y,\scrB_Y)$ which satisfies $\psi^{-1}(\scrB_Y)=\scrF$ up to $\mu$-null sets. Hence, conditioning over a sub sigma-algebra makes sense. 
    Let $(\Sigma,\kappa)$ be a {finite} or countable probability space. The \emph{Shannon entropy} of $\kappa$ is $H(\kappa)=-\sum_{a\in \Sigma}\kappa(a)\log(\kappa(a))\in [0,\infty]$ --- this is the  expected amount of information (in bits) required to describe the outcome of a sample according to $\kappa$.    

    \subsection{Observables and Partitions}\label{sec:observables}
    Let $(X,\scrB_X,\mu)$ be a standard probability space, and $\Sigma$  a {finite} or countable set equipped with the discrete topology ---  $\Sigma$ will often be called the \emph{color pallette, alphabet} or \emph{indexing set}. An \emph{observable} is a measurable map $\phi:X \to \Sigma$; the collection of inverse images $\{\phi^{-1}(a)\mid a\in \Sigma\}$ is often called a (measurable) \emph{partition} of $X$ indexed by $\Sigma$. 
    By pushforwarding $\mu$ along $\phi$, one gets a probability distribution $\phi_*\mu$ over $\Sigma$; we denote the Shannon entropy of this measure by $H_\mu(\phi)=H(\phi_*\mu)$, and  call this the Shannon entropy of $\phi$ with respect to $\mu$ --- this is the expected amount of information gained from learning  $\phi(x)$ for $x\sim \mu$. Given a sub sigma algebra $\scrF\subseteq \scrB_X$, and letting $\psi\colon (X,\scrB_X)\to (Y,\scrB_Y)$ be the associated factor and $\nu=\psi_*\mu$ the pushforward measure, the \emph{conditional Shannon entropy} of $\phi\colon X\to \Sigma$ with respect to $\mu$ given (or relative to) $\psi$ (or $\scrF$) is $H_\mu(\phi|\psi)=H_\mu(\phi|\scrF)=\displaystyle{\Ex_{y\sim \nu}[H_{\mu_y^\psi}(\phi)]}$ --- this is the amount of additional information we expect to receive when  $\phi(x)$ is revealed to us given that $x\sim \mu$ and that $\psi(x)$ was revealed to us first. 

    There are two natural notions of distance between observables. Given two observables $\phi$ and $\psi$ with the same indexing set $\Sigma$, the \emph{total variation} distance between them is  $d_{TV}(\phi,\psi)=\frac{1}{2}\sum_{a\in \Sigma} \mu(\phi^{-1}(a)\Delta \psi^{-1}(a))$, where $\Delta$ is the symmetric difference of sets --- this agrees with the usual total variation distance between $\phi_*\mu$ and $\psi_*\mu$, which measures given $x\sim \mu$, what is the probability that $\phi(x)\neq \psi(x)$. Even without $\phi$ and $\psi$ having the same indexing set, one can define the \emph{Rokhlin distance} between them by $d_{\Rok}(\phi,\psi)=H_\mu(\phi|\psi)+H_\mu(\psi|\phi)$ --- this distance measures how much additional information is gained by revealing one observable after the other.
    In addition to these distances, there is a natural order induced on the observables by \emph{refinement} and \emph{coarsening} of the partitions: An observable $\phi\colon X\to \Sigma$ is a refinement of $\psi\colon X\to \Omega$ (or alternatively, $\psi$ is a coarsening of $\phi$), denoted by $\phi\geq \psi$, if for every $a\in \Sigma$ there is a $b\in \Omega$ for which $\phi^{-1}(a)\subseteq \psi^{-1}(b)$ up to $\mu$-null sets. The \emph{join} of $\phi$ and $\psi$ is the  observable $\phi\vee\psi:X \to \Sigma\times \Omega$ given by
    $\phi \vee \psi(x)=(\phi(x),\psi(x))$.  If $\scrF_1,\scrF_2$ are two sigma-algebras then their join $\scrF_1\vee \scrF_2$ is the smallest sigma-algebra containing both; this notation is consistent with the one for observables, as $(\phi\vee \psi)^{-1}(\scrB_{\Sigma\times \Omega})=\phi^{-1}(\scrB_\Sigma)\vee \psi^{-1}(\scrB_\Omega)$. We often abuse notation and denote $\phi\vee \scrF$, by which we mean $\phi^{-1}(\scrB_\Sigma)\vee \scrF$.

     The following facts are (mostly) quite immediate

    \begin{fact}[See Lemma 2.1 in \cite{seward2016weak},  \cite{downarowicz2011entropy} and the appendix of \cite{seward2019krieger}]\label{fact:basic_prop_entropy}
        Let $(X,\scrB_X,\mu)$ be a standard probability space (equipped with a metric which makes it into a Polish space, with $\scrB_X$ the Borel sigma-algebra and $\mu$ a Borel measure), $\phi\colon X\to \Sigma$ and $\psi\colon X\to \Omega$ observables, and $\scrF,\scrF',(\scrF_n)_{n=1}^\infty$ sub sigma algebras of $\scrB_X$. Then:
        \begin{enumerate}[label=\textcolor{black}{\arabic*.}, ref=\arabic*.]
            \item \label{clause:Zero_cond_ent}\emph{Zero conditional entropy}:  $H_\mu(\phi| \scrF)=0$ if and only if $\phi^{-1}(a)\in \scrF$ up to a $\mu$-null set for every $a\in \Sigma$.
            \item \label{clause:Conditional_property}\emph{Conditional property}: $H_\mu(\phi\vee \psi|\scrF)= H_\mu(\psi|\scrF)+H_\mu(\phi|\psi\vee \scrF)$.\footnote{This is the entropic version of $\Pro[A|B]=\frac{\Pro[A\cap B]}{\Pro[B]}$.}
            \item \label{clause:Finite_coarsenings}\emph{Finite coarsenings approximate the entropy}: $H_\mu(\phi|\scrF)$ is the supremum over $H_\mu(\tau|\scrF)$  for all finite alphabet observables $\tau$ for which $\tau\leq \phi$. 
            \item \label{clause:pregiven_info}\emph{More pre-given information implies less entropy}: $\scrF'\subseteq \scrF$ up to $\mu$-null sets implies that $H_\mu(\phi|\scrF)\leq H_\mu(\phi|\scrF')$.
            \item \label{clause:limit_pregiven_info}\emph{Limit of increasing pre-given information}: If for every $n$ we have $\scrF_n\subseteq \scrF_{n+1}$, $H(\phi|\scrF_n)<\infty$ and $\scrF=\bigvee_{n}\scrF_n$, then $H(\phi|\scrF)=\inf_{n\in \mathbb{N}} H(\phi|\scrF_n)$.
            \item \label{clause:space_observable_complete_in_Rok_metric} \emph{The space of finite entropy observables is complete when equipped with the Rokhlin distance $d_\Rok$}\footnote{Here we are slightly abusing terminology. If two observables have Rokhlin distance zero then we consider them to be the same.}. 
            \item \label{clause:Distance_bounds}\emph{Distance bounds}: If $H_\mu(\phi|\scrF),H_\mu(\psi|\scrF)<\infty
            $, then $|H_\mu(\phi|\scrF)-H_\mu(\psi|\scrF)|\leq d_\Rok(\phi,\psi)$. Similarly, if $\sigma\colon X\to \Xi$ is another observable, and $H_\mu(\sigma|\phi\vee \scrF),H_\mu(\sigma|\phi\vee \scrF)<\infty$, then $|H_\mu(\sigma|\phi\vee \scrF)-H_\mu(\sigma|\phi\vee \scrF)|\leq 2d_\Rok(\phi,\psi)$. In addition, for a finite set $A\subseteq\cF$ we have $d_\Rok(\phi^A,\psi^A)\leq |A|\cdot d_\Rok(\phi,\psi)$. Finally, if $\Sigma=\Omega$, then $d_\Rok(\phi,\psi)\leq 2d_{TV}(\phi,\psi)$. 
            \item \label{clause:close_by_finite_obs}\emph{Every observable has a close by finite alphabet observable}: Let $\Sigma=\NN$, and for every $n\in \NN$ let $\phi_n(x)=\min(n,\phi(x))$. It is clear that $\phi_n\xrightarrow{TV}\phi$, and thus by \cref{clause:Distance_bounds}, $\phi_n\xrightarrow{\Rok}\phi$.
            \item \label{clause:close_by_cont_obs}\emph{Every finite alphabet observable has a close by continuous finite alphabet observable}: Let $|\Sigma|<\infty$. Suppose $X$ is a totally disconnected. By the outer regularity of $\mu$, for every $\eps>0$ there is a continuous observable $\varphi\colon X\to \Sigma$ such that $d_{TV}(\varphi,\phi)<\eps$.
            \item \label{clasues:finite_cont_obs_are_dense}\emph{The continuous finite alphabet observables are dense in all observables}: Assume $X$ is totally disconnected. From items \labelcref{clause:Distance_bounds}, \labelcref{clause:close_by_finite_obs} and \labelcref{clause:close_by_cont_obs} one can deduce that regardless of whether $\Sigma$ is finite or infinite,  for every $\eps>0$ there is a continuous finite alphabet observable $\varphi\colon X\to \Sigma'$ such that $d_\Rok(\varphi,\phi)<\eps$.
            
        \end{enumerate}
    \end{fact}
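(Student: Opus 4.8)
The plan is to reduce every item to two ingredients: elementary manipulations of $\eta(t)=-t\log t$ for finite-alphabet observables, and a few limiting procedures (exhaustion of a countable alphabet by finite coarsenings, upward martingale convergence, and completeness of the measure algebra); the cited references supply the folklore versions, so below I only lay out the logical skeleton. For observables $\phi,\psi$ with \emph{finite} alphabets, the chain rule $H_\mu(\phi\vee\psi)=H_\mu(\psi)+H_\mu(\phi\mid\psi)$ is additivity of $-\log$ applied to the factorization $\mu(\phi^{-1}(a)\cap\psi^{-1}(b))=\mu(\psi^{-1}(b))\cdot\mu(\phi^{-1}(a)\mid\psi^{-1}(b))$; conditioning every term on the cells of a finite $\scrF$-measurable partition and averaging yields \labelcref{clause:Conditional_property} for finite data. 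Item \labelcref{clause:Zero_cond_ent} is then immediate in the finite case, since $H_\mu(\phi\mid\scrF)$ is an average of Shannon entropies of conditional laws and so vanishes iff each of them is a point mass, i.e.\ iff $\phi$ is $\scrF$-measurable modulo null sets. Concavity of $\eta$ with Jensen's inequality is exactly ``conditioning reduces entropy'', which is \labelcref{clause:pregiven_info} for finite $\scrF'\subseteq\scrF$, and combined with the chain rule it gives subadditivity $H_\mu(\phi\vee\psi\mid\scrF)\le H_\mu(\phi\mid\scrF)+H_\mu(\psi\mid\scrF)$.

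Next I would pass to limits. For \labelcref{clause:Finite_coarsenings}, exhaust a countable alphabet $\Sigma=\{a_1,a_2,\dots\}$ by the finite coarsenings $\tau_n\le\phi$ that merge $\{a_n,a_{n+1},\dots\}$ into a single symbol; $H_\mu(\tau_n\mid\scrF)$ increases and monotone convergence for $\eta$ pins its limit to $H_\mu(\phi\mid\scrF)$. Armed with \labelcref{clause:Finite_coarsenings}, items \labelcref{clause:Zero_cond_ent}, \labelcref{clause:Conditional_property} and \labelcref{clause:pregiven_info} extend to countable alphabets; for \labelcref{clause:pregiven_info} with a general $\scrF'\subseteq\scrF$, which need not be an increasing union of finite algebras, I would instead invoke upward martingale convergence of the conditional expectations and conclude with Fatou. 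That same convergence input, applied to $\scrF=\bigvee_n\scrF_n$, is \labelcref{clause:limit_pregiven_info}: monotonicity from \labelcref{clause:pregiven_info} makes $n\mapsto H_\mu(\phi\mid\scrF_n)$ nonincreasing, and martingale convergence identifies the infimum with $H_\mu(\phi\mid\scrF)$, after reducing to finite $\phi$ via \labelcref{clause:Finite_coarsenings}.

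Item \labelcref{clause:Distance_bounds} is assembled from the chain rule and \labelcref{clause:pregiven_info}: $H_\mu(\phi\mid\scrF)\le H_\mu(\phi\vee\psi\mid\scrF)=H_\mu(\psi\mid\scrF)+H_\mu(\phi\mid\psi\vee\scrF)\le H_\mu(\psi\mid\scrF)+H_\mu(\phi\mid\psi)$, and symmetrically, which is the first bound; the variant with a third observable $\sigma$ is the same estimate applied after conditioning further, whence the constant $2$; the bound $d_\Rok(\phi^A,\psi^A)\le|A|\,d_\Rok(\phi,\psi)$ comes from subadditivity of the join together with measure-preservation of the translations, so each of the $|A|$ coordinates costs at most $d_\Rok(\phi,\psi)$; and the comparison with $d_{TV}$ for a common finite alphabet is the standard Fano-type estimate obtained by conditioning on the indicator of $\{\phi\neq\psi\}$. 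The one genuinely non-formal point, which I expect to be the main obstacle, is \labelcref{clause:space_observable_complete_in_Rok_metric}: that the finite-entropy observables, modulo $d_\Rok=0$, form a complete metric space. Here I would follow Rokhlin --- pass to a rapidly $d_\Rok$-Cauchy subsequence, realize its terms as coarsenings of a single $\NN$-valued observable via the Borel isomorphism theorem, use completeness of the measure algebra to extract a candidate limit partition, and upgrade convergence through \labelcref{clause:Distance_bounds} --- and cite \cite{downarowicz2011entropy} rather than reprove it.

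Finally the topological items. For \labelcref{clause:close_by_finite_obs}, $\phi_n=\min(n,\phi)$ is a coarsening of $\phi$ that eventually agrees with $\phi$ off a set of vanishing measure, so $d_{TV}(\phi_n,\phi)\to0$ and hence $\phi_n\xrightarrow{\Rok}\phi$ by \labelcref{clause:Distance_bounds}. For \labelcref{clause:close_by_cont_obs}, use that a totally disconnected Polish space has a basis of clopen sets: when $\Sigma$ is finite, outer regularity of $\mu$ lets one approximate each of the finitely many cells $\phi^{-1}(a)$ in measure by a clopen set, then disjointify and complete these to a genuine finite clopen partition whose associated observable $\varphi$ is continuous and $d_{TV}$-close to $\phi$. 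And \labelcref{clasues:finite_cont_obs_are_dense} is then the triangle inequality for $d_\Rok$ applied to \labelcref{clause:close_by_finite_obs}, \labelcref{clause:close_by_cont_obs} and \labelcref{clause:Distance_bounds}. In short, the content is the chain rule plus concavity of $\eta$ at the finite level, the usual convergence theorems to remove the finiteness, and Rokhlin's completeness theorem for \labelcref{clause:space_observable_complete_in_Rok_metric}.
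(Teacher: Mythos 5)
Your skeleton coincides with what the paper actually does for this Fact, which is to prove nothing and defer wholesale to Lemma 2.1 of Seward's weak-containment paper, Downarowicz's book, and the appendix of Seward's Krieger paper: chain rule plus concavity of $\eta$ at the finite level, exhaustion by finite coarsenings, martingale convergence for the monotone sigma-algebra limits, and Rokhlin's completeness theorem (cited, not reproved) for clause 6 is exactly the intended content, and your treatment of clauses 1--5, the first three bounds of clause 7, and clauses 9--10 is the standard one.

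One substantive caveat, concerning the last part of clause 7 and its use in clause 8. The Fano-type argument you invoke (condition on the indicator of $\{\phi\neq\psi\}$) gives $H_\mu(\phi|\psi)\leq H(\eps)+\eps\log(|\Sigma|-1)$ with $\eps=d_{TV}(\phi,\psi)$, not $H_\mu(\phi|\psi)\leq 2\eps$; and in fact the inequality $d_{\Rok}(\phi,\psi)\leq 2d_{TV}(\phi,\psi)$ as printed cannot be proved, because it is false once $|\Sigma|$ is moderately large (two independent uniform $\Sigma$-valued observables have $d_{TV}<1$ but $d_{\Rok}=2\log|\Sigma|$). This is a defect of the statement rather than of your strategy: the Fano bound is all that is needed downstream, since clauses 9--10 and the perturbation step in the proof of Theorem \ref{thm:equiv_cond_to_RBS_via_cont_obs} only require that, for a fixed finite alphabet, $d_{TV}$-small implies $d_{\Rok}$-small. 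Relatedly, in clause 8 the deduction ``$d_{TV}(\phi_n,\phi)\to 0$, hence $d_{\Rok}(\phi_n,\phi)\to 0$'' (which you copy from the statement) leans on that false inequality and moreover needs $H_\mu(\phi)<\infty$: since $\phi_n=\min(n,\phi)$ is a coarsening, the chain rule gives $d_{\Rok}(\phi_n,\phi)=H_\mu(\phi|\phi_n)=H_\mu(\phi)-H_\mu(\phi_n)$, which tends to $0$ precisely when $H_\mu(\phi)<\infty$ and is identically $+\infty$ otherwise. So the clean route for clause 8 is this direct tail computation under the (implicit, and in clause 6 explicit) finite-entropy hypothesis, rather than the $d_{TV}$ comparison; with that adjustment your outline is sound and matches the cited sources.
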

    
    \subsection{Generating observables and Rokhlin entropy}
    Let $\Gamma$ be a countable group and fix a probability measure preserving (p.m.p) action $\Gamma \cc (X,\mu)$ (on a standard probability space) and observable $\phi:X \to \Sigma$.
The group $\Gamma$ acts naturally on observables, by letting $\phi^\gamma\colon X\to \Sigma$ be the observable $\phi^\gamma(x)=\phi(\gamma^{-1}x)$ for every $\gamma\in\Gamma$. Therefore, 
a subset $S \subset \Gamma$ defines a map $ \phi^S: X \to \Sigma^S$ by letting $\phi^S=\bigvee_{s\in S} \phi^s$, or in other words
$\phi(x)(s) = \phi(s^{-1}x).$
This map arises  in two special cases: when $S$ is finite, in which case $\phi^S$ is an observable, and when $S=\G$, in which case it defines a $\G$-equivariant map from $X$ to $\Sigma^\G$. 
 A sigma algebra $\scrF\subseteq \scrB_X$ is said to be $\Gamma$-invariant if $B\in \scrF$ implies $\gamma.B\in \scrF$ for every $B\in \scrF$ and $\gamma\in \Gamma$. Similarly, a set  $B$ is $\Gamma$-invariant if $\gamma.B=B$ for every $\gamma\in \Gamma$, and we denote by $\scrI_\G$ the sigma algebra generated by $\Gamma$-invariant sets. 
 \begin{defn}[Generating observable]\label{defn:gen_ovservables}
     An observable $\phi$ is \emph{generating} with respect to a p.m.p  action $\Gamma\cc (X,\mu)$ if the smallest $\Gamma$-invariant sigma-algebra which makes $\phi$ measurable is $\scrB_X$ (up to $\mu$-null sets). Equivalently, for $\phi^{\Gamma}\colon X\to \Sigma^\Gamma$ we have that $(\phi^\Gamma)^{-1}(\scrB_{\Sigma^\Gamma})$ is $\scrB_X$ up to $\mu$-null sets.

Let $\scrF\subseteq \scrB_X$ be  a $\Gamma$-invariant sub sigma algebra. An observable $\phi$ is \emph{generating given} $\scrF$, if  $\phi^\Gamma\vee \scrF$ is $\scrB_X$ up to $\mu$-null sets
 \end{defn}
\begin{rem}\label{rem:generating}
    There is an ``operative'' way of thinking about $\phi$ being generating: If one samples $x\sim \mu$, and reveals to you $\phi^\gamma(x)$  for every $\gamma\in \Gamma$, then you can (almost surely) recover $x$. Similarly, if $\psi\colon X\to Y$ is the factor associated with $\scrF$, then $\phi$ generating given $\scrF$ implies that for $x\sim \mu$, revealing $\psi(x)$ and $\phi^\gamma(x)$  for every $\gamma\in \Gamma$ allows one to (almost surely) recover $x$.
\end{rem}

\begin{defn}[Rokhlin entropy]\label{defn:Rokhlin_entropy}
  Let $\scrF$ be a $\G$-invariant Borel sigma-algebra. The \emph{relative Rokhlin entropy} of this action given $\scrF$ is
    $$H_{\Rok}(\G \cc (X,\mu)|\scrF)=\inf H_\mu(\phi|\scrF),$$
    where the infimum is over all observables $\phi$ which generate given $\scrF$  (Definition \ref{defn:gen_ovservables}). In case $\scrF$ is trivial up to $\mu$-null sets, this quantity is called the \emph{Rokhlin entropy} and we denote it by $H_\Rok(\G\cc(X,\mu))$.
\end{defn}

\begin{rem}
    Seward usually defines the relative Rokhlin entropy  modulo the information given by $\scrI_\G$, namely as the infimum of $H_\mu(\phi|\scrF\vee \scrI_\G)$ for observables $\phi$ generating given $\scrF\vee\scrI_\G$. 
    His definition is the ``correct'' one in case of general aperiodic actions. But, in our case we mix both the aperiodic and periodic cases, and it would be more natural to use our definition. This will be consistent with the theory developed by Seward and others, since the $\scrF$ we use includes $\scrI_\G$ whenever the IRS in question is aperiodic, and in the periodic case we intentionally diverge from the standard definition to be compatible with our version of RBS (Definition \ref{defn:RBS_for_IRSs}). 
\end{rem}

    Let $\Sigma$ be a finite set, and $\kappa$  a probability measure over it. Recall that the \emph{Bernoulli shift} over $\G$ with base space $(\Sigma,\kappa)$ is the probability measure preserving action of $\G$ on the product space $(\Sigma^\G,\kappa^\G)$ given by $\gamma.c(x)=c(\gamma^{-1}x)$ for $\gamma,x\in \G$ and $c\colon \G\to \Sigma$.
    The \emph{canonical observable} of this action, which we denote by $\phi_\Sigma\colon \Sigma^\Gamma\to \Sigma$, is defined by $\phi_\Sigma(c)=c({\bf 1}_\Gamma)$. As the pushforward measure $(\phi_\Sigma)_*\kappa^\Gamma$ is $\kappa$, we have that $H_{\kappa^\Gamma}(\phi_\Sigma)=H(\kappa)$. In addition, $\phi_\Sigma$ is generating (Definition \ref{defn:gen_ovservables}), as $\phi^\gamma_\Sigma(c)=\phi_\Sigma(\gamma^{-1}.c)=\gamma^{-1}.c({\bf 1}_\Gamma)=c(\gamma)$ for every $\gamma\in\Gamma$, which means that we recover $c$ when we get access to all $\phi^\gamma_\Sigma$ (see Remark \ref{rem:generating}).    All in all, the Rokhlin entropy (Definition \ref{defn:Rokhlin_entropy}) of the Bernoulli shift satisfies $H_\Rok(\Gamma\cc (\Sigma^\Gamma,\kappa^\Gamma))\leq H(\kappa)$. A group $\Gamma$ satisfies the \emph{maximal Rokhlin entropy of Bernoulli Shifts} (RBS) criterion \cite{seward2019krieger} if for every finite $\Sigma$, and given $u$ the uniform measure over $\Sigma$, one has $H_\Rok(\Gamma\cc (\Sigma^\Gamma,u^\Gamma))= H(u)=\log|\Sigma|$.\footnote{Seward assumes this is true for \textbf{every}  standard probability  base space $(\Sigma,\kappa)$ and not only for finite ones equipped with the uniform measure. But, to deduce surjunctivity, both in his case and in the IRS case which is the topic of this paper, this weaker condition suffices.} 

    \section{\textbf{The maximal Rokhlin entropy of Bernoulli Shifts  criterion for Invariant Random Subgroups}}\label{sec:RBS}

Let $\cF$ be a finitely generated free group, and recall from Section \ref{sec:IRSs} the space $\Sub(\cF,\Sigma)$ consisting of all pairs $(K,c)$, where $K$ is a subgroup of $\cF$ and $c\colon \cF\to \Sigma$ is a $K$-invariant coloring, together with the action $w.(K,c)=(wKw^{-1},w.c)$ for $w\in \cF$. Let $\kappa$ be a probability distribution over $\Sigma$. For every $K\leq \cF$, let $\kappa_K=\kappa^{(_K\backslash^\cF)}$ be the product distribution  over right $K$-cosets of $\cF$; namely, $\kappa_K$  samples for every coset $Kw$ a color $c(Kw)\in \Sigma$ according to the distribution $\kappa$ independently of all other cosets. 
Given an invariant random subgroup (IRS) $\pi$ of $\cF$ (Definition \ref{defn:IRS}), one can define the probability measure $\kappa_\pi$ over $\Sub(\cF,\Sigma)$ to be $\Ex_{K\sim \pi}[{\bf 1}_K\times\kappa_K]$ --- in words, to sample according to $\kappa_\pi$ we need first to sample $K$ according to $\pi$, and then sample for every $Kw$ an independent color $c(Kw)$ according to $\kappa$, to retrieve $(K,c)\in \Sub(\cF,\Sigma)$. When equipping  $\Sub(\cF,\Sigma)$ with $\kappa_\pi$, the aforementioned action of $\cF$ on $\Sub(\cF,\Sigma)$ becomes p.m.p, and we call it the \emph{Bernoulli shift} over $\pi$ with base $(\Sigma,\kappa)$. As before, when $\Gamma\cong \nicefrac{\cF}{N}$ for a normal subgroup $N\trianglelefteq \cF$, the Bernoulli shift over the IRS ${\bf 1}_N$ with base $(\Sigma,\kappa)$ coincides with the Bernoulli shift over the group $\Gamma$ with the same base. Note also that, if $P\colon \Sub(\cF,\Sigma)\to \Sub(\cF)$ is the projection on the left coordinate, then the pushforward satisfies $P_*\kappa_\pi=\pi$.
We provide now the appropriate IRS analogue to the discussion on RBS from the end of Section \ref{sec:entropy_theory}. To that end we need the following simple fact:

\begin{fact}[Finite vs.\ infinite index decomposition of IRSs] \label{fact:decomposition}
Let $\pi$ be an IRS of $\cF$. There are IRSs $\pi_f$ and $\pi_\infty$ such that: $\bullet$ $\pi$ is a convex combination of them; $\bullet$ $K\sim \pi_f$ is of finite index; $\bullet$ $K\sim \pi_\infty$ is of infinite index. 

We call $\pi_f$ the \emph{finite component} of $\pi$ and $\pi_\infty$ its \emph{infinite component}. 
\end{fact}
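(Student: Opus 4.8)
The plan is to obtain $\pi_f$ and $\pi_\infty$ by conditioning $\pi$ on the two pieces of the conjugation-invariant Borel partition of $\Sub(\cF)$ into finite-index and infinite-index subgroups. Write $\Sub_{<\infty}(\cF)=\{K\le\cF:[\cF:K]<\infty\}$ and $\Sub_\infty(\cF)=\Sub(\cF)\setminus\Sub_{<\infty}(\cF)$. Since conjugate subgroups have the same index, both sets are invariant under the action $w.K=wKw^{-1}$, so the only preliminary point is that $\Sub_{<\infty}(\cF)$ is Borel (hence $\pi$-measurable), for which I would establish the stronger claim that for every $n\ge 1$ the set $U_n=\{K\in\Sub(\cF):[\cF:K]\le n\}$ is clopen in the product topology; then $\Sub_{<\infty}(\cF)=\bigcup_{n\ge 1}U_n$ is open and $\Sub_\infty(\cF)$ is closed.

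To prove $U_n$ clopen, I would use the following description, where $S$ is the fixed free generating set of $\cF$: one has $[\cF:K]\le n$ if and only if there exist $g_1=1_\cF,g_2,\dots,g_m\in\cF$ with $m\le n$, each of word length at most $n-1$, such that for every $i\in[m]$ and every $s\in S\cup S^{-1}$ there is some $j\in[m]$ with $g_i s g_j^{-1}\in K$. Indeed, if $[\cF:K]\le n$ then the Schreier coset graph of $K$ with respect to $S$ is connected on at most $n$ vertices, so every coset has a representative of length at most $n-1$, and the resulting finite set of cosets is (trivially) closed under right translation; conversely, if such $g_i$'s exist then $\{Kg_1,\dots,Kg_m\}$ is a nonempty subset of $\cF/K$ closed under the right $\cF$-action, hence equals $\cF/K$ by transitivity, giving $[\cF:K]\le m\le n$. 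Since this characterization is a finite Boolean combination of the cylinder conditions ``$g_i s g_j^{-1}\in K$'' --- each of which involves a group element of length at most $2n-1$ and so defines a clopen subset of $\Sub(\cF)\subseteq\{0,1\}^\cF$ --- the set $U_n$ is clopen.

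With measurability in hand, set $p=\pi(\Sub_{<\infty}(\cF))$. If $0<p<1$, define, for Borel $E\subseteq\Sub(\cF)$, $\pi_f(E)=p^{-1}\pi(E\cap\Sub_{<\infty}(\cF))$ and $\pi_\infty(E)=(1-p)^{-1}\pi(E\cap\Sub_\infty(\cF))$. These are Borel probability measures, $\pi=p\,\pi_f+(1-p)\,\pi_\infty$, and by construction $K\sim\pi_f$ has finite index while $K\sim\pi_\infty$ has infinite index, almost surely. Each is an IRS: since $\Sub_{<\infty}(\cF)$ is conjugation-invariant, $w.(E\cap\Sub_{<\infty}(\cF))=(w.E)\cap\Sub_{<\infty}(\cF)$ for all $w\in\cF$, whence $\pi_f(w.E)=p^{-1}(w.\pi)(E\cap\Sub_{<\infty}(\cF))=p^{-1}\pi(E\cap\Sub_{<\infty}(\cF))=\pi_f(E)$, and likewise $w.\pi_\infty=\pi_\infty$. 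The two boundary cases are handled by convention: if $p=1$ take $\pi_f=\pi$ and $\pi_\infty={\bf 1}_{\{1_\cF\}}$ (an IRS supported on the trivial subgroup, of infinite index since $\cF$ is infinite), and if $p=0$ take $\pi_f={\bf 1}_\cF$ and $\pi_\infty=\pi$; in both cases $\pi$ is the degenerate convex combination $p\,\pi_f+(1-p)\,\pi_\infty$.

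The one step with any content is the measurability claim of the second paragraph, and even that comes down to the elementary observation that a bounded-index subgroup is recognized from its intersection with a bounded ball; the remainder is the routine fact that conditioning an invariant probability measure on an invariant measurable set produces an invariant probability measure. I therefore expect no real obstacle, only the minor care needed to make the ``bounded ball'' argument precise.
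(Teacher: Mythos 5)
Your proof is correct and takes essentially the route the paper intends: the paper states this fact without proof, and the remark immediately following it supplies exactly the measurability ingredient you establish --- openness of $\Sub_f(\cF)$, which the paper gets by noting each finite-index subgroup is an isolated point of $\Sub(\cF)$, while you instead prove the bounded-index sets are clopen via a bounded-ball characterization. The remainder (conditioning the invariant measure on the conjugation-invariant pieces and handling the degenerate cases $p=0,1$) is the standard argument the paper takes for granted, and you carry it out correctly.
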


\begin{rem}\label{rem:more_on_the_decomposition}
    Let $\Sub_f(\cF)$ be the set of finite index subgroups of $\cF$ and $\Sub_\infty(\cF)$ the set of infinite index ones. As the singleton $\{K\}$ for every $K\leq \cF$ with $[\cF:K]<\infty$ is open, $\Sub_f(\cF)$ is an open set, and thus $\Sub_\infty(\cF)$ is closed. The above condition ``a subgroup $K$ sampled according to $\pi_f$ is of finite index'' is equivalent to $\pi_f$ being a convex combination of finitely described IRSs \eqref{eq:fin_desc_IRS}, while the other condition ``a subgroup $K$ sampled according to $\pi_\infty$ is of infinite index'' is the same as $\supp(\pi_\infty)\subseteq \Sub_\infty(\cF)$.

    We denote by $\Sub_f(\cF,\Sigma)$ all $(H,c)\in \Sub(\cF,\Sigma)$ such that $H\in \Sub_f(\cF)$, and  $\Sub_\infty(\cF,\Sigma)$ is similarly defined.  
\end{rem}

\begin{defn}\label{defn:canonical_obs}
The \emph{canonical observable} $\phi_\Sigma\colon \Sub(\cF,\Sigma)\to \Sigma$ is defined to be $\phi_\Sigma(K,c)=c(K)$.  
\end{defn}

\begin{claim}
    Let $\pi$ be an IRS of $\cF$, and $(\Sigma,\kappa)$ a finite probability space. Then, the canonical  observable $\phi_\Sigma$ is generating (Definition \ref{defn:gen_ovservables}) for the Bernoulli shift over $\pi$ with base $(\Sigma,\kappa)$ \textbf{given} $P$. In addition, $H_{\kappa_\pi}(\phi_\Sigma|P)=H(\kappa)$, which in turn implies  $H_\Rok(\cF\cc (\Sub(\cF,\Sigma),\kappa_\pi)|P)\leq H(\kappa)$.
\end{claim}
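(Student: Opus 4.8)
The plan is to establish the three assertions in order; each reduces to unwinding the definitions of the canonical observable, of the action $w.(K,c)=(wKw^{-1},w.c)$, and of the measure $\kappa_\pi$.

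\emph{Generating given $P$.} First I would compute the $\cF$-translates of $\phi_\Sigma$. Using $(w.c)(x)=c(w^{-1}x)$ and that $\phi_\Sigma(K',c')=c'(K')=c'(1_\cF)$ (since $1_\cF\in K'$ and $c'$ is constant on right $K'$-cosets), one gets for every $w\in\cF$
$$\phi_\Sigma^w(K,c)=\phi_\Sigma\bigl(w^{-1}.(K,c)\bigr)=\phi_\Sigma\bigl(w^{-1}Kw,\,w^{-1}.c\bigr)=(w^{-1}.c)(1_\cF)=c(w).$$
Hence the $\cF$-equivariant map $\phi_\Sigma^\cF\colon\Sub(\cF,\Sigma)\to\Sigma^\cF$ is literally $(K,c)\mapsto c$. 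Combined with $P(K,c)=K$, the pair $(P,\phi_\Sigma^\cF)$ is exactly the inclusion $\Sub(\cF,\Sigma)\hookrightarrow\Sub(\cF)\times\Sigma^\cF$, so — since $\Sub(\cF,\Sigma)$ carries the subspace Borel structure of a product of two Polish spaces — the join $(\phi_\Sigma^\cF)^{-1}(\scrB_{\Sigma^\cF})\vee P^{-1}(\scrB_{\Sub(\cF)})$ equals $\scrB_{\Sub(\cF,\Sigma)}$ exactly, with no need to pass to $\kappa_\pi$-null sets. This is the defining condition of Definition \ref{defn:gen_ovservables} for $\phi_\Sigma$ to generate given $P$. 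I would also note in passing that $P$ intertwines the two $\cF$-actions, so $P^{-1}(\scrB_{\Sub(\cF)})$ is $\cF$-invariant and the relative Rokhlin entropy given $P$ is well defined.

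\emph{The conditional entropy.} Next I would identify the disintegration of $\kappa_\pi$ along $P$. By construction $\kappa_\pi=\Ex_{K\sim\pi}[{\bf 1}_K\times\kappa_K]$ is characterized by $\Ex_{(K,c)\sim\kappa_\pi}[f]=\Ex_{K\sim\pi}\Ex_{c\sim\kappa_K}[f(K,c)]$ for bounded measurable $f$, and $P_*\kappa_\pi=\pi$; comparing with the disintegration theorem this shows the conditional distribution of $\kappa_\pi$ given $P=K$ is precisely ${\bf 1}_K\times\kappa_K$. Therefore $H_{\kappa_\pi}(\phi_\Sigma|P)=\Ex_{K\sim\pi}[H_{{\bf 1}_K\times\kappa_K}(\phi_\Sigma)]$. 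For fixed $K$, under ${\bf 1}_K\times\kappa_K$ the coloring $c$ is sampled from $\kappa_K$, which by definition draws the color of each right $K$-coset independently from $\kappa$; in particular $\phi_\Sigma(K,c)=c(K\cdot 1_\cF)$ is $\kappa$-distributed, so $(\phi_\Sigma)_*({\bf 1}_K\times\kappa_K)=\kappa$ and each inner term equals $H(\kappa)$. Averaging over $K\sim\pi$ gives $H_{\kappa_\pi}(\phi_\Sigma|P)=H(\kappa)$.

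\emph{The Rokhlin entropy bound.} This is now immediate from Definition \ref{defn:Rokhlin_entropy}: $H_\Rok(\cF\cc(\Sub(\cF,\Sigma),\kappa_\pi)\,|\,P)$ is the infimum of $H_{\kappa_\pi}(\phi\,|\,P)$ over all observables $\phi$ generating given $P$, and the previous two steps exhibit $\phi_\Sigma$ as such an observable with $H_{\kappa_\pi}(\phi_\Sigma|P)=H(\kappa)$. I do not expect a real obstacle here; the one spot that deserves a line of care is the claim that the ``fiber-product'' measure $\Ex_{K\sim\pi}[{\bf 1}_K\times\kappa_K]$ really has the ${\bf 1}_K\times\kappa_K$'s as its disintegration along $P$, but this is an immediate consequence of the disintegration theorem together with the defining integral identity for $\kappa_\pi$, so it is bookkeeping rather than substance.
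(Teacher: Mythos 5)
Your proposal is correct and follows essentially the same route as the paper: the same computation $\phi_\Sigma^w(K,c)=c(Kw)$ (your $c(w)$) showing that $\phi_\Sigma^\cF$ together with $P$ recovers $(K,c)$, the same identification of the disintegration of $\kappa_\pi$ along $P$ as ${\bf 1}_K\times\kappa_K$ giving $H_{\kappa_\pi}(\phi_\Sigma|P)=H(\kappa)$, and the same appeal to the definition of relative Rokhlin entropy for the final bound. Your packaging of generation as the statement that $(P,\phi_\Sigma^\cF)$ is literally the inclusion into $\Sub(\cF)\times\Sigma^\cF$ is just a tidier phrasing of the paper's ``operative'' recovery argument.
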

\begin{proof}
    For the generation claim, use the operative framework of Remark \ref{rem:generating}. If $(K,c)\sim \kappa_\pi$, and we get access to $P(K,c)=K$ as well as 
    \begin{equation}\label{eq:canonical_generates}
\phi^w_\Sigma(K,c)=\phi_\Sigma(w^{-1}.(K,c))=\phi_\Sigma(w^{-1}Kw,w^{-1}.c)=w^{-1}.c(w^{-1}Kw)=c(Kw)
    \end{equation}
    for every $w\in \cF$, then we  recover $(K,c)$ completely. Hence $\phi_\Sigma$ is generating given $P$.  

    For the entropy condition,  as $P_*\kappa_\pi=\pi$, and the conditional distributions (Section \ref{sec:basic_notions}) satisfy $(\kappa_\pi)^P_K={\bf 1}_K\times \kappa_K$ for every $K\leq \cF$, we have that
\begin{equation}\label{eq:entropy_of_canonical_obs}
    H_{\kappa_\pi}(\phi_\Sigma|P)=\Ex_{K\sim \pi}[H_{{\bf 1}_K\times \kappa_K}(\phi_\Sigma)]=\Ex_{K\sim \pi}[H(\underbrace{(\phi_\Sigma)_*{\bf 1}_K\times \kappa_K)}_{=\kappa})]=H(\kappa).
\end{equation}
    As the relative Rokhlin entropy (Definition \ref{defn:Rokhlin_entropy}) $H_\Rok(\cF\cc (\Sub(\cF,\Sigma),\kappa_\pi)|P)$ is the infimum of $H_{\kappa_\pi}(\phi|P)$ over all observables which generate given $P$, we are done.
\end{proof}

\begin{defn}[RBS]\label{defn:RBS_for_IRSs}
    Let $\pi$ be an IRS \textbf{supported on infinite index subgroups}. We say that $\pi$ satisfies the Rokhlin entropy of Bernoulli Shifts (RBS) criterion if $H_\Rok(\cF\cc (\Sub(\cF,\Sigma),u_\pi)|P)= H(u)=\log|\Sigma|$ for every finite base $\Sigma$ equipped with the uniform probability distribution $u$.
    We will say that a general IRS $\pi$ (not necessarily supported on infinite index subgroups) satisfies RBS if its infinite component $\pi_\infty$ (as in Fact \ref{fact:decomposition}) does. In particular, by definition, if $\pi$-a.e. subgroup has finite-index, then $\pi$ satisfies RBS.     We denote the set of  IRSs satisfying RBS by $\IRS_\RBS(\cF)$.
\end{defn}
By Definition \ref{defn:RBS_for_IRSs},  every IRS which is supported on finite index subgroups satisfies RBS. But, it is not clear from the definition that $\IRS_\RBS(\cF)$ is a closed subset of $\IRS(\cF)$. Hence, one needs still to argue why every cosofic IRS satisfies RBS.
In the next section, we provide a condition equivalent to RBS which will allow us both to construct an outer approximation algorithm for $\IRS_{\RBS}(\cF)$, as well as to show that $\IRS_\RBS(\cF)$ is weak* closed (and thus every cosofic IRS will satisfy RBS).  Let us finish this section by proving:
\begin{thm}\label{thm:RBS_implies_surj}
    If an IRS satisfies the RBS criterion (Definition \ref{defn:RBS_for_IRSs}), then it is surjunctive (Definition \ref{defn:surjunctive}).
\end{thm}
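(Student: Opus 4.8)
The plan is to mimic, in the IRS setting, the entropy argument sketched at the end of Section~\ref{sec:entropy_theory}: a cellular automaton induces a measurable factor map of Bernoulli shifts, and if it is injective then the Rokhlin entropy of its image is at least $\log|\Sigma|$, while if it is \emph{not} surjective then the image, being a closed invariant proper subsystem, must have strictly smaller entropy --- a contradiction. The key point is to set this up relative to the projection $P$, so that the finite-index part of $\pi$ is handled for free and only the infinite-index component $\pi_\infty$ needs the RBS hypothesis. By Fact~\ref{fact:decomposition} it suffices to prove the statement separately for $\pi_f$ and for $\pi_\infty$; for $\pi_f$ every cellular automaton is injective-implies-surjective by an elementary pigeonhole/finiteness argument on each finite orbit, so the substance is entirely in the $\pi_\infty$ case, where $\pi_\infty$ satisfies RBS by assumption.

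First I would fix a finite alphabet $\Sigma$, put the uniform measure $u$ on it, and consider the Bernoulli shift $\cF \cc (\Sub(\cF,\Sigma), u_\pi)$ together with its canonical observable $\phi_\Sigma$, which by the Claim just proved generates given $P$ and has $H_{u_\pi}(\phi_\Sigma\mid P)=\log|\Sigma|$. Given a $\pi$-cellular automaton $\Phi\colon \dom(\pi)\to\dom(\pi)$ with $P\circ\Phi=P$, continuous and $\cF$-equivariant, I would first observe that $\Phi$ restricts to a measurable, measure-class-respecting, $\cF$-equivariant map on $(\Sub(\cF,\Sigma),u_\pi)$ --- the content here is that $\Phi$ need not push $u_\pi$ forward to itself, but (exactly as in Seward's Corollary~4.1, which we may invoke as Fact~\ref{fact:cor7.3Seward}) one still gets that the image system, equipped with the pushforward measure, is a factor of a Bernoulli-type system and one can compare relative Rokhlin entropies. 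The composition $\phi_\Sigma\circ\Phi$ is then a generating observable (given $P$) for the \emph{domain} system precisely when $\Phi$ is injective a.e., because injectivity of $\Phi$ means the $\cF$-orbit of colors $(\phi_\Sigma\circ\Phi)^w(K,c)=c'(Kw)$ together with $K$ recovers $(K,c')$ and hence $(K,c)$. This yields
$$\log|\Sigma| = H_\Rok(\cF\cc(\Sub(\cF,\Sigma),u_\pi)\mid P) \le H_{u_\pi}(\phi_\Sigma\circ\Phi \mid P) \le H_{u_\pi}(\phi_\Sigma\mid P)=\log|\Sigma|,$$
so the entropy of the image system relative to $P$ is also exactly $\log|\Sigma|$.

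Next I would suppose, for contradiction, that $\Phi$ is a.e.\ injective but not surjective. Then $Y=\overline{\Img(\Phi)}\cap \dom(\pi)$ is a closed, $\cF$-invariant, $P$-preserving proper subset of $\dom(\pi)$, and the restricted shift $\cF\cc(Y, \Phi_*u_\pi)$ carries the pushforward measure, with canonical observable $\psi$ (the restriction of $\phi_\Sigma$) generating given $P$. The RBS hypothesis on $\pi_\infty$ is precisely what forces: \emph{any} proper closed invariant subsystem living over $\supp(\pi_\infty)$ has relative Rokhlin entropy strictly below $\log|\Sigma|$ --- this is the IRS analogue of clause $(2)$, recovered from RBS as Seward does (via the fact that a nontrivial constraint on a positive-measure set of cosets strictly decreases the entropy of a generating observable). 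Combining this with the previous paragraph's equality gives the contradiction, so $\Phi$ must be surjective. I would run the argument once more on $\pi_f$ (trivial, as noted), and then assemble the two components.

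The main obstacle, and the step I would be most careful about, is the subtlety flagged by the acknowledgments: a cellular automaton on an IRS is only guaranteed to be \emph{almost everywhere} injective and need not preserve $u_\pi$, so one cannot literally talk about ``the sofic/Rokhlin entropy of the image'' without first justifying that the pushforward measure still sits inside the correct class of systems for which the entropy monotonicity under factor maps holds. This is exactly where Fact~\ref{fact:cor7.3Seward} (Seward's relative version) is used in place of the naive topological-entropy argument; getting the measure-theoretic bookkeeping right --- that $\Phi$ descends to a genuine factor map of the relevant p.m.p.\ systems and that relative Rokhlin entropy is monotone along it --- is the crux, with the strict-inequality input (clause $(2)$ from RBS) being the only other nontrivial ingredient.
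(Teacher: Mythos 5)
Your reduction to the infinite-index component and your first step are in line with the paper: pushing $u_\pi$ forward along an injective, $\cF$-equivariant, $P$-preserving $\Phi$ and using invariance of relative Rokhlin entropy under isomorphism does give $H_{\Rok}(\cF\cc(\Sub(\cF,\Sigma),\Phi_*u_\pi)\mid P)=\log|\Sigma|$ once RBS is assumed. The gap is in the second half. You assert that RBS yields the analogue of clause $(2)$ --- that any proper closed invariant subsystem over $\supp(\pi_\infty)$ has relative Rokhlin entropy strictly below $\log|\Sigma|$ --- ``as Seward does,'' with the mechanism that ``a nontrivial constraint on a positive-measure set of cosets strictly decreases the entropy of a generating observable.'' But this is exactly the nontrivial content of the theorem, and the mechanism you cite does not apply: the measure on the image is $\mu=\Phi_*u_\pi$, which is not a Bernoulli measure over the cosets, so no independence structure is available to convert a topological constraint on the support into an entropy drop. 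The paper never proves a topological-subsystem strict inequality; instead it proves a measure-rigidity statement. Concretely, it first checks (using injectivity) that $\Stab(\Phi(K,c))=\Stab(K,c)$, so $\mu$ has stabilizer type $\pi$, and that $\scrI_\cF\subseteq P^{-1}(\scrB_{\Sub(\cF)})$ up to null sets --- both are hypotheses needed to invoke Fact \ref{fact:cor7.3Seward}, and neither appears in your sketch. Then Fact \ref{fact:cor7.3Seward} is used for what it actually says: the \emph{percolation entropy} $H_{\perc,\mu}(\phi_\Sigma|P)$ bounds $H_{\Rok}(\cF\cc(\Sub(\cF,\Sigma),\mu)\mid P)$ from above, so it must equal $\log|\Sigma|$; and Claim \ref{claim:props_of_perc_entropy}(2) shows that maximal percolation entropy forces $\mu=u_\pi$ (each coset color must be uniform and independent of the past, hence of all other cosets). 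Surjectivity then follows because $\supp(\mu)=\supp(u_\pi)=\dom(\pi)$ and $\Phi$ is continuous.

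So the missing idea is precisely the rigidity step ``maximal (percolation) entropy relative to $P$ pins the measure down to $u_\pi$,'' which is what replaces the clause-$(2)$-style strict inequality you appeal to. Your invocation of Fact \ref{fact:cor7.3Seward} as measure-theoretic ``bookkeeping'' for comparing entropies of factors misreads its role; without the percolation-entropy upper bound and the accompanying rigidity claim (or an independent proof of the strict inequality for proper subsystems carrying the pushforward measure, with the stabilizer-type and $\scrI_\cF$ hypotheses verified), the contradiction you aim for is not established.
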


To that end, we need to collect additional definitions  and claims. The following fact is straightforward  and we leave it without a proof:

\begin{fact}[Basic surjunctiviy properties]\label{fact:props_of_IRSs_surj}
Let $\pi,\pi_1$ and $\pi_2$ be IRSs of $\cF$. We have:
\begin{enumerate}
    \item \emph{Surjunctivity is preserved under convex combinations}:  If $\pi_1$ and $\pi_2$ are  surjunctive (Definition \ref{defn:surjunctive}), then  every convex combination of $\pi_1$ and $\pi_2$ is also surjunctive.
    \item \emph{Finite index IRSs are surjunctive}: If $\pi$ is supported on finite index subgroups, then it is surjunctive.
\end{enumerate}
\end{fact}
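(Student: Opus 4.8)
\textbf{Proof proposal for Fact \ref{fact:props_of_IRSs_surj}.}

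The plan is to prove the two clauses separately, each by unwinding the definition of a $\pi$-cellular automaton (Definition \ref{defn:cellular_automaton_on_IRS}) and the fact that $P\circ\Phi=P$, so that a $\pi$-cellular automaton really does act fiberwise over $\Sub(\cF)$.

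For clause (1), let $\pi=t\pi_1+(1-t)\pi_2$ with $t\in(0,1)$ (the boundary cases are trivial), and suppose $\pi_1,\pi_2$ are surjunctive. First I would observe that $\supp(\pi)=\supp(\pi_1)\cup\supp(\pi_2)$, hence $\dom(\pi)=\dom(\pi_1)\cup\dom(\pi_2)$, and that a $\pi$-cellular automaton $\Phi$ restricts to a $\pi_i$-cellular automaton $\Phi_i$ for $i=1,2$ (continuity, conjugation invariance and $P\circ\Phi=P$ all restrict to the closed, $\cF$-invariant subsets $\dom(\pi_i)$). Next I would argue that if $\Phi$ is $\kappa_\pi$-a.e.\ injective (or, in the relevant notion of ``almost everywhere'' here, injective off a $\pi$-null set of fibers), then since $\pi\geq t\pi_1$ and $\pi\geq (1-t)\pi_2$ as measures, $\Phi_i$ is injective off a $\pi_i$-null set for each $i$; surjunctivity of $\pi_i$ then forces $\Phi_i$ to be surjective onto $\dom(\pi_i)$. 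Finally, since $\dom(\pi)=\dom(\pi_1)\cup\dom(\pi_2)$ and $\Phi$ agrees with $\Phi_i$ on $\dom(\pi_i)$, surjectivity of both $\Phi_1$ and $\Phi_2$ gives surjectivity of $\Phi$. The one point requiring care is the precise meaning of ``almost everywhere injective'' in Definition \ref{defn:surjunctive} and how it behaves under the inequality $\pi\geq t\pi_i$: I would phrase it as ``injective on $P^{-1}(\Sub')$ for a $\pi$-conull Borel set $\Sub'\subseteq\Sub(\cF)$'' and then use $\pi_i(\Sub')=1$ since $\pi_i$ is absolutely continuous with respect to $\pi$.

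For clause (2), suppose $\pi$ is supported on finite index subgroups; by Remark \ref{rem:more_on_the_decomposition} this means $\pi$ is a convex combination (integral) of finitely described IRSs of the form \eqref{eq:fin_desc_IRS}, and by clause (1) it suffices to treat a single finitely described IRS $\pi=\frac{1}{|\Omega|}\sum_{x\in\Omega}{\bf 1}_{\Stab(x)}$ coming from an action $\cF\cc\Omega$ on a finite set. Here $\supp(\pi)$ is a finite set of finite index subgroups, each isolated point of $\Sub(\cF)$; for any such $K$, the space of $K$-invariant $\Sigma$-colorings is $\Sigma^{{}_K\backslash^\cF}$, a \emph{finite} set since $[\cF:K]<\infty$. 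Thus $\dom(\pi)$ is a finite set, and a $\pi$-cellular automaton $\Phi$ is simply a map of this finite set to itself that commutes with the (finite) $\cF$-action and preserves the $P$-fibers. An injective self-map of a finite set is automatically surjective, so $\Phi$ is surjective; hence $\pi$ is surjunctive. The main obstacle in clause (2) is purely bookkeeping: one must check that reducing to a single finitely described IRS via clause (1) is legitimate when $\pi$ is a general (possibly non-finitely-supported, i.e.\ infinite convex combination) IRS on $\Sub_f(\cF)$ --- but since $\Sub_f(\cF)$ is countable (finite index subgroups of a finitely generated free group are countable), $\pi$ is a countable convex combination of Dirac masses ${\bf 1}_K$ at finite index $K$, each of which is surjunctive by the finite-set argument above, and a routine extension of clause (1) to countable convex combinations (injectivity off a null set passes to each component, surjectivity of each component gives surjectivity on the union of domains) closes the case.

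Overall, I expect neither clause to be genuinely hard; the only subtlety worth spelling out is the interaction between the ``almost everywhere'' qualifier in the definition of surjunctivity and the passage to summands/components of a convex combination, which is why the statement is flagged as ``straightforward'' and left without proof in the text.
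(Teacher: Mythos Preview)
The paper explicitly leaves this fact without proof, so there is nothing to compare against; your argument is essentially what the authors have in mind and both clauses are handled correctly at the level of ideas.

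There is one small gap worth flagging in your treatment of clause~(2). When you pass from a single finitely described IRS to a general $\pi$ supported on $\Sub_f(\cF)$ via a \emph{countable} convex combination $\pi=\sum_i t_i\pi_i$, you conclude that surjectivity of each $\Phi_i$ onto $\dom(\pi_i)$ yields surjectivity of $\Phi$ onto $\bigcup_i\dom(\pi_i)$. But $\dom(\pi)$ is defined via the \emph{topological} support of $\pi$, and for an infinite sum one only has $\supp(\pi)=\overline{\bigcup_i\supp(\pi_i)}$; the closure can be strictly larger (a sequence of finite-index subgroups may accumulate on an infinite-index subgroup). So $\bigcup_i\dom(\pi_i)$ is in general only dense in $\dom(\pi)$, not all of it. The fix is immediate: $\dom(\pi)$ is compact and $\Phi$ is continuous, so $\Phi(\dom(\pi))$ is closed; since it contains the dense set $\bigcup_i\dom(\pi_i)$, it equals $\dom(\pi)$. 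Alternatively, you can bypass the convex-combination reduction entirely and argue directly fiberwise: for each finite-index $K\in\supp(\pi)$ the fiber $\{K\}\times\Sigma^{{}_K\backslash^\cF}$ is finite and preserved by $\Phi$ (since $P\circ\Phi=P$), so injectivity gives surjectivity on that fiber; these fibers are dense in $\dom(\pi)$ by the same support argument, and the closed-image observation finishes.
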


Let $([0,1],\lambda)$ be the interval equipped with the Lebesgue measure on it. 
For a subgroup $K\leq \cF$, let $\lambda_K=\lambda^{(_K\backslash^\cF)}$ be the distribution that associates to each right $K$-coset an independent uniform real number between $0$ and $1$, which we think of as the ``time in which this coset appears''. Almost surely, $t\colon _K\backslash^\cF\to [0,1]$ sampled according to $\lambda_K$ induces a linear order on $_K\backslash^\cF$, and we let $L_{K,t}=\{w\in \cF \mid t(K)>t(Kw)\}$ to consist of all ``{past} group elements''.
Given an IRS $\pi$, let $\lambda_\pi=\Ex_{K\sim \pi}[{\bf 1}_K\times \lambda_K]$, namely sample a subgroup $K$ according to $\pi$ and then sample a uniform real number between $0$ and $1$ for each $K$-coset independently. 

Every p.m.p action $\cF\cc (X,\mu)$ induces an IRS $\Stab(\mu)$ of $\cF$ by letting $K=\Stab(x)$ for $x\sim \mu$. Given an IRS $\pi$, we say that $\cF\cc (X,\mu)$ has stabilizer type $\pi$, if $\Stab(\mu)=\pi$. Let $\mu_K=\mu^{\Stab}_K$ be the associated conditional distributions (Section \ref{sec:basic_notions}).

\begin{defn}[Percolation entropy]
    Let  $\pi$ be an  IRS of $\cF$,  $\cF\cc (X,\mu)$ a p.m.p action with stabilizer type $\pi$,   $\phi\colon X\to \Omega$ an observable (Section \ref{sec:observables}) and $\scrF$ an $\cF$-invariant sub sigma algebra of $\scrB_X$. The  percolation entropy of $\phi$ given $\scrF$  is
    \begin{equation}\label{eq:def_percolation}
H_{\perc,\mu}(\phi|\scrF)=\Ex_{(K,t)\sim \lambda_\pi}[H_{\mu_K}(\phi|\phi^{L_{K,t}}\vee \scrF)].
    \end{equation}
\end{defn}

\begin{rem}
    Let us unpack the above definition, as it is a bit mysterious. By the definition of conditional distributions, and as $\pi$ is the stabilizer type of $\mu$, sampling $x\sim \mu$ is the same as first sampling $K\sim \pi$ and then $x\sim \mu_K$. On the right hand side of \eqref{eq:def_percolation}, we have one more object sampled, which is a time function $t\colon _K\backslash^\cF\to [0,1]$ sampled according to $\lambda_K$. Let $\psi$ be the factor associated  to $\scrF$. Then, the quantity $H_{\mu_K}(\phi|\phi^{L_{K,t}}\vee\scrF)$ measures how much information is gained by learning $\phi(x)$ for $x\sim \mu_K$, given that you have learned the value $\psi(x)$ and the conjugates $\phi^w(x)$ for every past element $w\in L_{K,t}$. Namely,  given all evaluations of past conjugates of $\phi$ (in addition to $\psi$), how much more information will the evaluation of $\phi$ provide?
    The next claim contains the main example we will need, which will also shed some light on this definition.
\end{rem}

\begin{claim}\label{claim:props_of_perc_entropy}
Let $\pi$ be an IRS  of $\cF$ supported on infinite index subgroups,   $\Sigma$ a finite set, $|\Sigma|>1$, and $P\colon \Sub(\cF,\Sigma)\to \Sub(\cF)$ the projection on the first coordinate. Recall the shift action $w.(K,c)=(wKw^{-1},w.c)$ of $\cF$ on $\Sub(\cF,\Sigma)$. 
\begin{enumerate}
    \item Let $u$ be the uniform measure over $\Sigma$, $\cF\cc(\Sub(\cF,\Sigma),u_\pi)$ the associated Bernoulli shift, and $\phi_\Sigma\colon \Sub(\cF,\Sigma)\to \Sigma$  the canonical observable $\phi_\Sigma(K,c)=c(K)$. Then, the stabilizer type of $u_\pi$ is $\pi$, the sigma algebra of $\cF$-invariant sets $\scrI_\cF$ is contained in $P^{-1}(\scrB_{\Sub(\cF)})$ up to $u_\pi$-null sets,  and $H_{\perc,u_\pi}(\phi_\Sigma|P)=\log|\Sigma|$.
    \item Let $\cF$ still act on $\Sub(\cF,\Sigma)$ by  shifts,  and let $\mu$ be  a Borel probability measure on $\Sub(\cF,\Sigma)$ which is preserved under this action. Assume the stabilizer type of $\mu$ is $\pi=P_*\mu$  and $H_{\perc,\mu}(\phi_\Sigma|P)=\log|\Sigma|$. Then $\mu=u_\pi$. 
\end{enumerate} 
\end{claim}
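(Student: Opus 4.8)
The plan is to treat the two parts separately; part (2) carries the bulk of the work, while part (1) breaks into three essentially independent assertions, only one of which needs real effort.

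\emph{Part (1), stabilizer type.} For every $(K,c)\in\Sub(\cF,\Sigma)$ one has $K\subseteq\Stab(K,c)\subseteq N_\cF(K)$: the first inclusion because $c$ is $K$-invariant, the second because the first coordinate of $w.(K,c)$ is $wKw^{-1}$, which must equal $K$. For a fixed infinite-index $K$ and $w\in N_\cF(K)\setminus K$, the map $Kx\mapsto Kw^{-1}x$ is a well-defined fixed-point-free permutation of the infinite coset set ${}_K\backslash^\cF$, so the event $\{w.c=c\}$ forces the colouring to be constant on each of its orbits (all of size $\ge 2$); since $|\Sigma|\ge 2$ and there is either an infinite orbit or infinitely many orbits, this event has $u_K$-probability $0$. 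A union bound over the countably many such $w$ gives $\Stab(K,c)=K$ for $u_K$-a.e.\ $c$, whence $\Stab(u_\pi)=\Ex_{K\sim\pi}[{\bf 1}_{\Stab(K,c)}]=\Ex_{K\sim\pi}[{\bf 1}_K]=\pi$. \emph{Part (1), percolation entropy.} Restricting to the fibre over $K$ makes $P$ constant, and $\phi_\Sigma^{L_{K,t}}$ records only the colours of the cosets strictly preceding the identity coset in the order $t$; since under $u_K$ the colours are i.i.d.\ uniform, the identity coset's colour is independent of them, so each term $H_{u_K}(\phi_\Sigma|\phi_\Sigma^{L_{K,t}}\vee P)$ equals $H(u)=\log|\Sigma|$, and therefore so does the average $H_{\perc,u_\pi}(\phi_\Sigma|P)$.

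\emph{Part (1), relative ergodicity} ($\scrI_\cF\subseteq P^{-1}(\scrB_{\Sub(\cF)})$ modulo $u_\pi$-null sets). This is the one point in part (1) requiring a genuine argument. For an $\cF$-invariant Borel set $A\subseteq\Sub(\cF,\Sigma)$, writing $A_K=\{c:(K,c)\in A\}$, conditioning on $P$ gives $\|{\bf 1}_A-\Ex[{\bf 1}_A|P]\|^2=\int u_K(A_K)\bigl(1-u_K(A_K)\bigr)\,d\pi(K)$, so ${\bf 1}_A$ is $P$-measurable precisely when $u_K(A_K)\in\{0,1\}$ for $\pi$-a.e.\ $K$; thus the assertion reduces to this $0$--$1$ law. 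I would prove the latter with a Fock-space (chaos) decomposition of the fibre $L^2$, using that since $K$ has infinite index, B.H.~Neumann's lemma (a group is not a finite union of cosets of infinite-index subgroups) lets one translate any finitely supported observable off itself, which rules out nonconstant invariant vectors relative to the base. The technical nuisance is that the Bernoulli fibres $\Sigma^{{}_K\backslash^\cF}$ genuinely vary with $K$, so this must be run over the coset bundle; equivalently, one first decomposes $\pi$ into ergodic components and invokes that the Bernoulli extension of an ergodic p.m.p.\ action stays ergodic. This is the step I expect to be fussiest to write, though it is morally standard.

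\emph{Part (2).} Each summand of $H_{\perc,\mu}(\phi_\Sigma|P)$ is bounded by $H_{\mu_K}(\phi_\Sigma)\le\log|\Sigma|$, so equality in the hypothesis forces, for $\lambda_\pi$-a.e.\ $(K,t)$, that under $\mu_K$ the colour $\phi_\Sigma$ of the identity coset is uniform and independent of $\phi_\Sigma^{L_{K,t}}$, i.e.\ of the colours of all cosets strictly preceding it. (First note $\Stab(\mu)=P_*\mu$ forces $\Stab(K,c)=K$ for $\mu$-a.e.\ $(K,c)$ --- test against the continuous functions $L\mapsto{\bf 1}[w\in L]$, $w\in\cF$ --- so $\mu_K$ is the genuine fibre conditional and $\mu=\Ex_{K\sim\pi}[{\bf 1}_K\times\mu_K]$.) Using the $\cF$-invariance of $\mu$ and of $\lambda_\pi$, applying $w_0^{-1}$ turns the identity coset into $Kw_0$ and its $t$-past into the set of cosets with smaller $t$-value, and intersecting over the countably many $w_0\in\cF$ upgrades the statement to: for $\pi$-a.e.\ $K$ and $\lambda_K$-a.e.\ $t$, \emph{every} coset $D$ has colour uniform under $\mu_K$ and independent of the colours of the cosets with smaller $t$-value. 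Fix such a $K$ and enumerate its (countably many) cosets $D_1,D_2,\dots$; for each $n$ the event $\{t(D_1),\dots,t(D_{n-1})<t(D_n)\}$ has $\lambda_K$-probability $1/n>0$, hence meets the conull set of good $t$'s, and for such a $t$ the colour $c(D_n)$ is uniform and independent of $(c(D_1),\dots,c(D_{n-1}))$ under $\mu_K$. Thus $(c(D))_D$ is i.i.d.\ uniform, i.e.\ $\mu_K=u_K$ for $\pi$-a.e.\ $K$, and integrating over $K\sim\pi$ gives $\mu=u_\pi$.
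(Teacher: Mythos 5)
Most of your proposal tracks the paper's argument and is sound. Your stabilizer computation (union bound over $w\in N_\cF(K)\setminus K$, infinitely many orbits of size $\ge 2$ forcing probability zero) is the same mechanism the paper uses, and your percolation-entropy computation for $u_\pi$ is identical. In part (2) you are in fact more careful than the paper: the paper only records that $c(K)$ is uniform and independent of the other cosets, whereas you use invariance of $\mu$ and $\lambda_\pi$ to move the identity coset to every coset $Kw_0$, and then the positive-probability ordering events $\{t(D_1),\dots,t(D_{n-1})<t(D_n)\}$ to telescope to the full product structure $\mu_K=u_K$; this is a genuine tightening of a terse step, and your preliminary observation that $\Stab(\mu)=P_*\mu$ forces $\Stab(K,c)=K$ a.e.\ (testing against $L\mapsto{\bf 1}[w\in L]$) correctly justifies identifying the stabilizer-conditionals with the fibre conditionals.

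The one genuine gap is the relative ergodicity statement $\scrI_\cF\subseteq P^{-1}(\scrB_{\Sub(\cF)})$ mod $u_\pi$-null sets. Your reduction to the fibrewise $0$--$1$ law (via $\|{\bf 1}_A-\Ex[{\bf 1}_A\,|\,P]\|^2=\int u_K(A_K)(1-u_K(A_K))\,d\pi(K)$) is correct, and the mechanism you name --- infinite index lets you translate any finite-window observable off itself --- is exactly the right one; but you never execute it, deferring either to a ``Fock-space decomposition over the coset bundle'' or to the quoted fact that Bernoulli extensions of ergodic actions remain ergodic. The second route additionally requires an ergodic decomposition of $\pi$ and the quoted fact is itself nontrivial (it needs the infinite-index hypothesis and, if proved rather than cited, reduces to the same mixing computation), so as written the obligation is not discharged. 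The paper closes precisely this hole directly and without any chaos decomposition or ergodic decomposition: it constructs continuous maps $\phi_n\colon\Sub_\infty(\cF)\to\cF$ with $\phi_n(K)^{-1}\notin\bigcup_{w,w'\in W}Kww'^{-1}$ eventually for every finite window $W$, uses density of continuous (finite-window) functions in $L^1(u_\pi)$ and independence of disjoint cosets under $u_K$ to get the covariance decay $\Ex_{(K,c)\sim u_\pi}[f_1\circ\phi_n\cdot f_2-\widetilde{f_1\circ\phi_n}\cdot\widetilde f_2]\to 0$, and then applies this with $f_1={\bf 1}_E$, $f_2=1-{\bf 1}_E$ for an invariant set $E$ to force $u_K(E_K)\in\{0,1\}$ for $\pi$-a.e.\ $K$. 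Supplying an argument of this kind (about half a page) is what your sketch still owes.
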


\begin{proof}

{\bf Claim 1}.  The stabilizer type of $u_\pi$ is $\pi$.

\begin{proof}[Proof of Claim 1]
Given a subgroup $K \le \cF$, recall that  $u_K=u^{_K \backslash ^\cF}$ is the product measure on $\Sigma^{_K \backslash ^\cF}$, that ${\bf 1}_K$ is the Dirac measure concentrated on $K$, and that
$u_\pi = \Ex_{K\sim \pi}[ {\bf 1}_K\times u_K].$   
The stabilizer of $K\leq \cF$ with respect to conjugation is the normalizer ${\rm Nor}(K)$, and the stabilizer of $c\colon \cF\to K$ with respect to the shift action is $M(c)=\{w\in \cF \mid \forall x\in \cF\colon c(x)=c(w^{-1}x)\}$. Hence, the stabilizer of $(K,c)$ is ${\rm Nor}(K)\cap M(c)$. As $c$ is  $K$-invariant, $K\subseteq M(c)$ and thus $K\subseteq \Stab(K,c)$ for every $(K,c)\in \Sub(\cF,\Sigma)$. By Boole's inequality,
\[
\begin{split}
\Pro_{(K,c)\sim u_\pi}[K\neq \Stab(K,c)]&=\Pro_{(K,c)\sim u_\pi}[\exists w\in \cF\colon w\notin K \land w\in \Stab(K,c)]\\
&\leq \sum_{w\in \cF} \Pro_{(K,c)\sim u_\pi}[w\notin K\land w\in \Stab(K,c)]\\
&=(*).
\end{split}
\]
   Let ${\rm rad}(\pi)=\displaystyle{\bigcap_{K\in \supp(\pi)}}K$. Then, for every $w\notin {\rm rad}(\pi)$, there is a positive probability that $K\sim \pi$ satisfies $w\notin K$. Hence,
   \[
(*)=\sum_{w\notin {\rm rad}(\pi)}\Pro_{(K,c)\sim u_\pi}[w\notin K]\cdot \Pro[w\in \Stab(K,c)\mid w\notin K].
   \]
    Hence, if we can show that 
    $\Pro[w\in \Stab(K,c)\mid w\notin K]=0$ for every infinite index subgroup $K$, then we are done. 
   Because $K$ has infinite index in $\cF$, there is an infinite $I \subset \cF$ such that  $\Theta=\{K\gamma, Kw^{-1}\gamma\mid \gamma\in I\}$ are all different $K$-cosets. As $c$ was sampled from $u_K$, the color of each coset in $\Theta$ is independent of every other coset. In particular,  $\Pro[c(\gamma)=c(w^{-1}\gamma)]=\frac{1}{|\Sigma|}<1$. Therefore, with probability $1$ there is a $\gamma\in I$ for which $c(\gamma)\neq c(w^{-1}\gamma)=w.c(\gamma)$, and $w\notin \Stab(K,c)$.
    
\end{proof}

{\bf Claim 2}. The sigma algebra of $\cF$-invariant sets $\scrI_\cF$ is contained in $P^{-1}(\scrB_{\Sub(\cF)})$ up to $u_\pi$-null sets.

\begin{proof}[Proof of Claim 2]
Recall that $\Sub_\infty(\cF)$ (Remark \ref{rem:more_on_the_decomposition}) is the closed set of infinite index subgroups of $\cF$. 
There is a sequence of continuous maps $\phi_n\colon \Sub_\infty(\cF)\to \cF$ such that for every $K\in \Sub_\infty(\cF)$ and for any finite set $W \subseteq \cF$, there  exists $n_0$ such that $\phi_n(K)^{-1}\notin \bigcup_{w,w'\in W} Kww'^{-1}$ for all $n\geq n_0$ --- for example, $\phi_n$ can lexicographically order the sphere of radius $n$ in the Schreier graph $\Sch(\cF,K,S)$ and output the shortest representative of the first vertex in this order.\footnote{These functions are continuous as the Schreier graph up to radius $n$ can be constructed by only knowing the intersection of $K$ with a ball of radius $2n+1$.}

For $f \in L^1(\Sub(\cF,\Sigma),u_\pi)$ and $K\in \supp(\pi)\subseteq \Sub_\infty(\cF)$, let $\widetilde f(K)=\displaystyle{\Ex_{c\sim u_K}}[f(K,c)]$ and let $f\circ \phi_n(K,c)=f(\phi_n(K).(K,c))$. We claim the following mixing property: For every  $f_1,f_2\in L^1(\Sub(\cF,\Sigma),u_\pi)$,  the covariance of $f_1\circ\phi_n$ and $f_2$ given $P$ tends to zero, namely
\begin{align}\label{E:claim2}
   \lim_{n\to\infty} \Ex_{(K,c)\sim u_\pi}[ f_1\circ \phi_n(K,c)\cdot  f_2(K,c) -\widetilde {f_1\circ \phi_n}(K)\cdot \widetilde f_2(K)]=0.
\end{align}
In order to prove this statement, it suffices to prove it in the special case in which both $f_1,f_2$ are continuous, because continuous functions are dense in $L^1(\Sub(\cF,\Sigma),u_\pi)$. 
Assume $f_1,f_2$ are continuous. Then there exists a finite set $W \subset \cF$ such that  $f_1(K,c)$ and $f_2(K,c)$  depend only on $K\cap W$ and the restriction of $c$ to $W$. For a large enough $n$ we have $\phi_n(K)^{-1}\notin \bigcup_{w,w'\in W} Kww'^{-1}$, which implies in particular that the cosets $\{Kw\}_{w\in W}$ and $\{K\phi_n(K)^{-1}w\}_{w\in W}$ are disjoint.   As $u_K$ colors each coset independently, we deduce that $c|_W$ and $\phi_n(K).c|_W=c|_{\phi_n(K)^{-1}\cdot W}$ are independent when $c\sim u_K$. Thus,  $\Ex_{(K,c)\sim u_\pi}[ f_1\circ \phi_n(K,c)\cdot  f_2(K,c) -\widetilde {f_1\circ \phi_n}(K)\cdot \widetilde f_2(K)]=0$, which proves \eqref{E:claim2}.

Now let $E \subseteq \Sub(\cF,\Sigma)$ be a $\cF$-invariant measurable subset. Let $f_1={\bf 1}_E$ and $f_2=1-f_1$. As $E$ is $\cF$-invariant,  $f_1=f_1\circ \phi_n$ for every $n$, and we have $(f_1\circ \phi_n)\cdot  f_2 =0$ almost everywhere. So \eqref{E:claim2} implies that for $\pi$-almost every $K$,  $\widetilde f_1 (K)=0\  {\rm or}\ 1$ --- in other words, for $\pi$-almost every $K$, either for $u_K$-almost all $c\colon _K\backslash^\cF\to \Sigma$  we have $(K,c)\in E$, or for almost none of them. This implies that $E$ is in $P^{-1}(\mathscr{B}_{\Sub(\cF)})$ up to $u_\pi$-null sets, as required.


\end{proof}

{\bf Claim 3}. $H_{\perc,u_\pi}(\phi_\Sigma|P)=\log|\Sigma|$.

\begin{proof}[Proof of Claim 3]
By definition,
$$H_{\perc,u_\pi}(\phi_\Sigma|P)=\Ex_{(K,t)\sim \lambda_\pi}[H_{{\bf 1}_K\times u_K}(\phi_\Sigma|\phi_\Sigma^{L_{K,t}}\vee P)].$$
When sampling $(K,c)\sim {\bf 1}_K\times u_K$, $P(K,c)=K$ gives us no new information, and  $H_{{\bf 1}_K\times u_K}(\phi_\Sigma|\phi_\Sigma^{L_{K,t}}\vee P)=H_{{\bf 1}_K\times u_K}(\phi_\Sigma|\phi_\Sigma^{L_{K,t}})$. In addition, as $\Id_\cF\notin L_{K,t}$, $\phi_\Sigma$ is independent of $\phi_\Sigma^{L_{K,t}}$ and $$H_{{\bf 1}_K\times u_K}(\phi_\Sigma|\phi_\Sigma^{L_{K,t}})=H_{{\bf 1}_K\times u_K}(\phi_\Sigma)=H(\phi_{\Sigma*}({\bf 1}_K\times u_K))=H(u)=\log|\Sigma|. \qedhere$$ 
\end{proof}

{\bf Claim 4}. Assume $H_{\perc,\mu}(\phi_\Sigma|P)=\log|\Sigma|$. Then $\mu=u_\pi$.

\begin{proof}[Proof of Claim 4]
As $\mu$ both has  stabilizer type $\pi$ and $P_*\mu=\pi$, we have
$$H_{\perc,\mu}(\phi_\Sigma|P)=\Ex_{(K,t)\sim \lambda_\pi}[H_{\mu_K}(\phi_\Sigma|\phi_\Sigma^{L_{K,t}}\vee P)],$$
where  $\mu=\Ex_{K\sim \pi}[{\bf 1}_K\times \mu_K]$. 
Because $\phi_\Sigma$ takes values in $\Sigma$,
$H_{\mu_K}(\phi_\Sigma|\phi_\Sigma^{L_{K,t}}\vee P)\le \log |\Sigma|$,
and equality holds if and only if $\phi_\Sigma$ is independent of $\phi_\Sigma^{L_{K,t}}\vee P$ (for a.e.  $(K,t)$) and $\phi_{\Sigma *}\mu_K$ is the uniform measure on  $\Sigma$. But this implies that if $c \sim \mu_K$ is a random sample, then $c(K)$ is independent of $c(Kw)$ for all non-identity cosets $Kw$ (since each coset $Kw$ appears in $L_{K,t}$ with positive probability). Moreover, the variables $c(Kw)$ are uniformly distributed on $\Sigma$. Thus $\mu_K$ is $u_K$ and $\mu=\pi$.
\end{proof}
\renewcommand{\qedsymbol}{}
\end{proof}

The reason we have defined percolation entropy is because of the following result:

\begin{fact}[Corollary 7.3 in \cite{seward2016weak}]\label{fact:cor7.3Seward}
        Let  $\pi$ be an IRS of $\cF$  supported on infinite index subgroups,  $\cF\cc (X,\mu)$  a p.m.p action with stabilizer type $\pi$,   $\phi\colon X\to \Omega$ a \textbf{generating} observable (Section \ref{sec:observables} and Definition \ref{defn:gen_ovservables}) and $\scrF$ an $\cF$-invariant sub sigma algebra of $\scrB_X$ which contains the sub algebra $\scrI_\cF$ of $\cF$-invariant sets. Then
        \[
H_\Rok(\cF\cc(X,\mu)|\scrF)\leq H_{\perc,\mu}(\phi|\scrF).
        \]
\end{fact}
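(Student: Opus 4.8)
The plan is to prove the inequality by constructing, for every $\eps>0$, an observable on $X$ that generates given $\scrF$ (Definition \ref{defn:gen_ovservables}) and whose conditional Shannon entropy given $\scrF$ is at most $H_{\perc,\mu}(\phi|\scrF)+\eps$; since $H_\Rok(\cF\cc(X,\mu)|\scrF)$ is by definition (Definition \ref{defn:Rokhlin_entropy}) the infimum of $H_\mu(\psi|\scrF)$ over all such observables, this yields the bound. We may assume $H_{\perc,\mu}(\phi|\scrF)<\infty$, as there is otherwise nothing to prove; in particular the integrand $H_{\mu_K}(\phi|\phi^{L_{K,t}}\vee\scrF)$ of \eqref{eq:def_percolation} is finite for $\lambda_\pi$-almost every $(K,t)$.

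The construction runs through the random linear order. Pass to the extension $(\bar X,\bar\mu)$ of $\cF\cc(X,\mu)$ obtained by attaching to each orbit an independent uniform real ``time'' for every coset of the stabilizer: set $\bar X=\{(x,t):x\in X,\ t\colon\, {}_{\Stab(x)}\backslash^{\cF}\to[0,1]\}$ with $\bar\mu=\Ex_{x\sim\mu}[\delta_x\times\lambda_{\Stab(x)}]$ and the diagonal $\cF$-action; let $p\colon\bar X\to X$ be the projection, $\bar\scrF=p^{-1}(\scrF)$, and let $\mathscr{T}\subseteq\scrB_{\bar X}$ be the $\cF$-invariant sub-sigma-algebra generated by $(x,t)\mapsto t(\Stab(x))$ together with all its $\cF$-translates, so that $\mathscr{T}$ records the entire time-labelling of the orbit and $p^{-1}(\scrB_X)\vee\mathscr{T}=\scrB_{\bar X}$. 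Then $\bar\phi:=\phi\circ p$ still generates given $\bar\scrF\vee\mathscr{T}$ (from all its translates one recovers $x$, since $\phi$ generates given $\scrF$, and $\mathscr{T}$ supplies $t$), and one shows $H_\Rok(\cF\cc(\bar X,\bar\mu)|\bar\scrF\vee\mathscr{T})=H_\Rok(\cF\cc(X,\mu)|\scrF)$: the extension adjoins exactly an independent Bernoulli factor — the Bernoulli shift over $\pi$ with base $([0,1],\lambda)$ — on which we then condition, and relative Rokhlin entropy is insensitive to this. Because that base has infinite Shannon entropy, this equality is not formal: it is proved by discretizing the times into $N$ levels, invoking the finite-base version of the statement, and letting $N\to\infty$ (using the monotone-limit property of conditional entropy, Fact \ref{fact:basic_prop_entropy}), and it is precisely here that the hypotheses $\scrI_\cF\subseteq\scrF$ (to absorb the invariant/stabilizer data that the times expose) and ``$\pi$ supported on infinite-index subgroups'' (so that every orbit is infinite and the level sets are complete sections of the orbit equivalence relation) are used.

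It remains to build a generating observable $\bar\psi$ on $(\bar X,\bar\mu)$ given $\bar\scrF\vee\mathscr{T}$ with $H_{\bar\mu}(\bar\psi|\bar\scrF\vee\mathscr{T})\le H_{\perc,\mu}(\phi|\scrF)+\eps$; combined with the previous paragraph this finishes the proof. Conditioning on $\mathscr{T}$ we treat the linear order $<_t$ on each orbit as known; for a point $q=(x,t)$ let $L_q$ be the set of orbit points with strictly smaller time. The observable $\bar\psi$ records at $q$ a prefix-free binary codeword for the value $\phi(x)$, computed from the conditional law of $\phi$ given its values on $L_q$ and given $\bar\scrF$ — exactly the law whose entropy is the integrand of \eqref{eq:def_percolation}. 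By the Kraft inequality and the standard source-coding estimate (together with a block-coding refinement, organized along the random order, that pushes the per-symbol overhead below $\eps$), the expected codeword length — and hence $H_{\bar\mu}(\bar\psi|\bar\scrF\vee\mathscr{T})$ — is at most $H_{\perc,\mu}(\phi|\scrF)+\eps$, the measure-preserving/IRS structure identifying the $\bar\mu$-average of the per-point conditional entropies with $H_{\perc,\mu}(\phi|\scrF)$. The point requiring real work — where I expect the main difficulty to lie — is to verify that $\bar\psi$ generates given $\bar\scrF\vee\mathscr{T}$, i.e.\ that $\phi(x)$ (hence, by equivariance, all of $\bar\phi^{\cF}$) can be decoded from the $\cF$-translates of $\bar\psi$ together with $\bar\scrF\vee\mathscr{T}$: the random order has the order type of $\QQ$, so there is no minimal point at which to anchor a well-founded decoding. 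One circumvents this by discretizing time into finitely many levels and decoding from the bottom level upward, resolving the dense order within each level by a martingale/limiting argument over successive refinements and controlling the codeword-length loss as the number of levels grows, with completeness of the level sections (again using infinite index) ensuring the decoding terminates almost surely. Granting this, Definition \ref{defn:Rokhlin_entropy} gives $H_\Rok(\cF\cc(\bar X,\bar\mu)|\bar\scrF\vee\mathscr{T})\le H_{\bar\mu}(\bar\psi|\bar\scrF\vee\mathscr{T})\le H_{\perc,\mu}(\phi|\scrF)+\eps$, and with the equality from the previous paragraph we conclude $H_\Rok(\cF\cc(X,\mu)|\scrF)\le H_{\perc,\mu}(\phi|\scrF)+\eps$ for all $\eps>0$, as desired.
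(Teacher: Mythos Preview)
The paper does not prove this statement; it is stated as a \emph{Fact} and attributed directly to Corollary~7.3 of \cite{seward2016weak}, with no argument given. So there is no ``paper's own proof'' to compare against beyond the citation.

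Your sketch is in the right conceptual neighborhood --- passing to the time-labelled extension, using the random past to encode $\phi$ by conditional codewords, and reading off a small generator --- and this is indeed the circle of ideas Seward develops. But as you yourself flag (``Granting this\ldots'', ``where I expect the main difficulty to lie''), what you have written is an outline with substantial gaps rather than a proof, and I would push back on where you locate the hardest step. The identity
\[
H_\Rok\bigl(\cF\cc(\bar X,\bar\mu)\,\big|\,\bar\scrF\vee\mathscr{T}\bigr)=H_\Rok\bigl(\cF\cc(X,\mu)\,\big|\,\scrF\bigr)
\]
is not a formality that follows by ``discretizing the times into $N$ levels''. The direction you actually need is $\ge$: from a small generator on the extension, conditional on $\mathscr{T}$, produce a small generator on $X$ conditional on $\scrF$. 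This is precisely the delicate direction --- a generator on $\bar X$ may entangle the time data with $x$ in ways that do not descend --- and in Seward's work it rests on the nontrivial machinery built up earlier in \cite{seward2016weak} (and on \cite{MR4308159}), not on a soft limiting argument. Note also that the extension is a \emph{relative} Bernoulli over the stabilizer map, not a direct product with a fixed Bernoulli shift, so even the finite-base reduction you invoke is not routine.

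The generation of $\bar\psi$ that you single out is a genuine issue too, but it is the more tractable one once the above is in hand; the resolution you sketch (level discretization plus completeness of sections) is along the right lines. In short: your proposal is a fair heuristic summary of why the inequality should hold, but it defers exactly the points where Seward's proof does the real work, and it should be read as motivation for the citation rather than an independent proof.
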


\begin{proof}[Proof of Theorem \ref{thm:RBS_implies_surj}]
    By Fact \ref{fact:props_of_IRSs_surj}, it is enough to prove the theorem assuming $\pi$ is supported on infinite index subgroups. Let $\Phi\colon \dom(\pi)\to \dom(\pi)$ be an injective cellular automaton. Since $u_\pi$ is supported on $\dom(\pi)$, we can push it forward along $\Phi$ and get $\mu=\Phi_*u_\pi$. As $\Phi$ is $\cF$-equivariant,  we have 
    \[
    \forall (K,c)\in \Sub(\cF,\Sigma),w\in \Stab(\Phi(K,c))\ \colon \ \ \Phi(K,c)=w.\Phi(K,c)=\Phi(w.(K,c)),
    \]
    but it is assumed to be injective and thus $(K,c)=w.(K,c)$ and $w\in \Stab(K,c)$. Hence, $\Stab(\Phi(K,c))=\Stab(K,c)$ and $\mu$ has the same stabilizer type as $u_\pi$, which is $\pi$ by Claim \ref{claim:props_of_perc_entropy}.
    Now, the shift $\cF\cc(\Sub(\cF,\Sigma),\mu)$ is a p.m.p action which is isomorphic to the original Bernoulli shift (as $\Phi$ is injective). Since $\pi$ satisfies RBS (Definition \ref{defn:RBS_for_IRSs}), we have 
    \[
H_{\Rok}(\cF\cc(\Sub(\cF,\Sigma),\mu)|P)=H_{\Rok}(\cF\cc(\Sub(\cF,\Sigma),u_\pi)|P)=\log|\Sigma|.
    \]
    By Claim \ref{claim:props_of_perc_entropy},  $\scrI_\cF\subseteq P^{-1}(\scrB_{\Sub(\cF)})$, and thus by Fact \ref{fact:cor7.3Seward}, we get that $H_{\perc,\mu}(\phi_\Sigma|P)\geq \log|\Sigma|$. As  
    \[
    H_{\mu_K}(\phi_\Sigma|\phi_\Sigma^{L_{K,t}}\vee P)\leq H_{\mu_K}(\phi_\Sigma)=H((\phi_{\Sigma})_*\mu_K)\leq \log|\Sigma|
    \]
    (where the first inequality is due \cref{clause:pregiven_info} in Fact \ref{fact:basic_prop_entropy} and the last is because $(\phi_{\Sigma})_*\mu_K$ is some probability distribution over $\Sigma$), we deduce that $H_{\perc,\mu}(\phi_\Sigma|P)$ is also upper bounded by $\log|\Sigma|$. Hence, $H_{\perc,\mu}(\phi_\Sigma|P)=\log|\Sigma|$, and by $(2)$ in Claim \ref{claim:props_of_perc_entropy} we have $\mu=u_\pi$. Since $\Phi$ is continuous, its image agrees with $\supp(\mu)=\supp (u_\pi)=\dom(\pi)$ and it is thus  surjective. 
\end{proof}

\section{\textbf{A  condition equivalent to RBS}}\label{sec:cont_cond}
In this section we prove a condition equivalent to RBS. In some sense, this result is the key for providing an outer approximation (Theorem \ref{thm:outer_approximation}) and thus for proving Theorem \ref{thm:main}. This is because it translates the RBS condition, which is about measurable (which is a non-finitary condition) generating  (a global condition) observables, to a condition for general continuous observables, allowing local analysis.
\begin{thm}\label{thm:equiv_cond_to_RBS_via_cont_obs}
    An IRS $\cF$ satisfies RBS (Definition \ref{defn:RBS_for_IRSs}) if and only if for every pair of finite sets $\Sigma,\Omega$  and every continuous observable $\psi\colon \Sub(\cF,\Sigma)\to \Omega$ we have
    \begin{equation}\label{eq:cont_condition}
        \log|\Sigma|=H(u)\leq H_{u_\pi}(\psi|P)+H_{u_\pi}(\phi_\Sigma|\psi^\cF\vee P),
    \end{equation}
    where $u$ is the uniform measure over $\Sigma$ and $\phi_\Sigma$ is the canonical observable (Definition \ref{defn:canonical_obs}).
\end{thm}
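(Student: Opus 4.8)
Fix a finite set $\Sigma$ and write $X=\Sub(\cF,\Sigma)$, $\mathscr{P}=P^{-1}(\scrB_{\Sub(\cF)})$, and $h(\nu)=H_{\Rok}(\cF\cc(X,u_\nu)\mid\mathscr{P})$ for an IRS $\nu$; by the claim proved just before Definition \ref{defn:RBS_for_IRSs} one always has $h(\nu)\le\log|\Sigma|$ (witnessed by the canonical observable $\phi_\Sigma$, which is generating given $P$). The plan is to reduce, for each fixed $\Sigma$, to the \emph{core equivalence}
\[
h(\pi)=\log|\Sigma|\ \Longleftrightarrow\ \log|\Sigma|\le H_{u_\pi}(\psi\mid P)+H_{u_\pi}(\phi_\Sigma\mid\psi^\cF\vee P)\ \text{ for all finite }\Omega\text{ and continuous }\psi\colon X\to\Omega,
\]
and then to deduce the theorem by a routine decomposition argument: the partition of $X$ into $\Sub_f(\cF,\Sigma)$ and $\Sub_\infty(\cF,\Sigma)$ is $\cF$-invariant and lies in $\mathscr{P}$, so $h(\cdot)$ is additive along the splitting $\pi=t\pi_f+(1-t)\pi_\infty$ of Fact \ref{fact:decomposition}; moreover $h(\pi_f)=\log|\Sigma|$ for any finite-index-supported $\pi_f$ (reducing by ergodic decomposition to a transitive action $\cF\cc\cF/K$ with $[\cF:K]=d<\infty$, one has $d\log|\Sigma|=H_{u_{\pi_f}}(\mathrm{id}\mid P)=H_{u_{\pi_f}}(\phi^\cF\mid P)\le d\,H_{u_{\pi_f}}(\phi\mid P)$ for any $P$-generating $\phi$, since over each conjugate $\phi^\cF$ factors through $d$ translates of $\phi$; the reverse bound is $\phi_\Sigma$). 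As $h(\pi_\infty)\le\log|\Sigma|$ too, additivity gives $h(\pi)=\log|\Sigma|\iff h(\pi_\infty)=\log|\Sigma|\iff\pi$ satisfies RBS (Definition \ref{defn:RBS_for_IRSs}).

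For the forward direction of the core equivalence the key tool is the \emph{subadditivity of Rokhlin entropy over invariant factors} (Seward; see \cite{seward2019krieger1} and the appendix of \cite{seward2019krieger}): for $\cF$-invariant sub-$\sigma$-algebras $\mathscr{P}\subseteq\mathscr{G}\subseteq\scrB_X$,
\[
H_{\Rok}(\cF\cc(X,u_\pi)\mid\mathscr{P})\ \le\ H_{\Rok}(\mathscr{G}\mid\mathscr{P})+H_{\Rok}(\cF\cc(X,u_\pi)\mid\mathscr{G}),
\]
where $H_{\Rok}(\mathscr{G}\mid\mathscr{P})$ is the infimum of $H_{u_\pi}(\alpha\mid\mathscr{P})$ over observables $\alpha$ with $\alpha^\cF\vee\mathscr{P}=\mathscr{G}$. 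Given a continuous $\psi\colon X\to\Omega$, apply this with $\mathscr{G}:=\psi^\cF\vee\mathscr{P}$ (which is $\cF$-invariant since $\psi^\cF$ and $\mathscr{P}$ are): then $\psi$ itself witnesses $H_{\Rok}(\mathscr{G}\mid\mathscr{P})\le H_{u_\pi}(\psi\mid P)$, while $\phi_\Sigma$ — being generating given $P$, hence a fortiori given the larger $\mathscr{G}$ — witnesses $H_{\Rok}(\cF\cc(X,u_\pi)\mid\mathscr{G})\le H_{u_\pi}(\phi_\Sigma\mid\psi^\cF\vee P)$. Substituting $h(\pi)=\log|\Sigma|$ yields the desired inequality. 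Going through Rokhlin-entropy subadditivity is essential here: naively expanding $H_{u_\pi}(\psi^A\vee\phi_\Sigma\mid P)$ by the conditional property, for a finite $A\subset\cF$ with $\psi^A$ close to generating, would cost a factor $|A|$ in front of $H_{u_\pi}(\psi\mid P)$, whereas subadditivity charges ``specifying $\psi$ once'' only $H_{u_\pi}(\psi\mid P)$.

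For the reverse direction it suffices to prove $h(\pi)\ge\log|\Sigma|$, i.e.\ $H_{u_\pi}(\phi\mid P)\ge\log|\Sigma|$ for every $\phi$ generating given $P$ with $H_{u_\pi}(\phi\mid P)<\infty$. Since $X=\Sub(\cF,\Sigma)$ is compact metrizable and totally disconnected, \cref{clasues:finite_cont_obs_are_dense} of Fact \ref{fact:basic_prop_entropy} produces continuous finite-alphabet observables $\psi$ with $d_\Rok(\psi,\phi)$ as small as desired. As $\phi$ generates given $P$, $H_{u_\pi}(\phi_\Sigma\mid\phi^\cF\vee P)=0$, so by \cref{clause:limit_pregiven_info} there is a \emph{finite} $A\subset\cF$ with $H_{u_\pi}(\phi_\Sigma\mid\phi^A\vee P)<\eps$; fixing $A$ first and then shrinking $d_\Rok(\psi,\phi)$, \cref{clause:Distance_bounds} gives $H_{u_\pi}(\phi_\Sigma\mid\psi^A\vee P)\le\eps+2|A|\,d_\Rok(\psi,\phi)$, and \cref{clause:pregiven_info} (more conditioning lowers entropy) then gives $H_{u_\pi}(\phi_\Sigma\mid\psi^\cF\vee P)\le\eps+2|A|\,d_\Rok(\psi,\phi)$. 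Choosing $\psi$ so that moreover $d_\Rok(\psi,\phi)<\min\{\eps,\eps/(2|A|)\}$ and invoking the hypothesis for this $\psi$, one gets $H_{u_\pi}(\psi\mid P)\ge\log|\Sigma|-2\eps$, whence by \cref{clause:Distance_bounds} again $H_{u_\pi}(\phi\mid P)\ge H_{u_\pi}(\psi\mid P)-\eps\ge\log|\Sigma|-3\eps$. Letting $\eps\to0$ and taking the infimum over generating $\phi$ gives $h(\pi)\ge\log|\Sigma|$, hence $h(\pi)=\log|\Sigma|$.

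The main obstacle is the forward direction, and specifically securing the relative form of Seward's subadditivity of Rokhlin entropy stated above (the non-relative version appears in \cite{seward2019krieger1}, and the relative one should follow by the same argument); everything else is bookkeeping with the elementary estimates of Fact \ref{fact:basic_prop_entropy}, the one delicate point being to fix the finite set $A$ before deciding how close $\psi$ must be to $\phi$.
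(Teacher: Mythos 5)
Your reverse direction is essentially the paper's own argument (approximate a $P$-generating observable by a continuous finite-alphabet one in Rokhlin distance, fixing the finite set $A$ \emph{before} choosing how close the approximation must be), and your packaging via the reformulation ``RBS $\iff H_\Rok(\cF\cc(\Sub(\cF,\Sigma),u_\pi)|P)=\log|\Sigma|$ for all $\Sigma$'' is a reasonable alternative to the paper's patching of observables across the finite/infinite parts. The genuine gap is in the forward direction, and it sits exactly where you flag it: the ``relative subadditivity of Rokhlin entropy'' is not available off the shelf in the form you need, and it is not bookkeeping --- it is the crux of the whole theorem. The naive proof fails because a witness $\beta$ for $H_\Rok(\cdot\,|\,\psi^\cF\vee P)$ only controls $H_{u_\pi}(\beta|\psi^\cF\vee P)$, not $H_{u_\pi}(\beta|\psi\vee P)$. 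The tool that actually powers this step in the paper is the Alpeev--Seward relative generator theorem (Corollary 1.4 in \cite{MR4308159}): for an \emph{aperiodic} p.m.p.\ action there is an observable $\alpha$ generating given $\psi^\cF\vee P$ whose \emph{unconditional} entropy satisfies $H_{u_\pi}(\alpha)\le H_\Rok(\cdot\,|\,\psi^\cF\vee P)+\eps$; then $\psi\vee\alpha$ generates given $P$, and $H_\Rok(\cdot\,|\,P)\le H_{u_\pi}(\psi|P)+H_{u_\pi}(\alpha|\psi\vee P)\le H_{u_\pi}(\psi|P)+H_{u_\pi}(\phi_\Sigma|\psi^\cF\vee P)+\eps$. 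Citing ``Seward's subadditivity, relative version should follow by the same argument'' leaves precisely this nontrivial input unproven.

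There is a second, concrete obstruction to your plan as written: you apply the subadditivity to the full system $(\Sub(\cF,\Sigma),u_\pi)$, whose finite-index component is periodic, whereas the Alpeev--Seward machinery (and Seward's subadditivity statements, which are formulated for aperiodic actions or relative to $\scrI_\cF$) does not cover that case --- the paper even warns that its relative Rokhlin entropy deliberately diverges from Seward's convention in the periodic regime. Your finite-index computation only treats \emph{generating} observables (to get $h(\pi_f)=\log|\Sigma|$); it does not yield \eqref{eq:cont_condition} for an arbitrary continuous $\psi$ over $\pi_f$, so you cannot decompose your way around the problem with what you have written. The repair is the paper's: both terms in \eqref{eq:cont_condition} are affine over the $P$-measurable splitting $\pi=\alpha_f\pi_f+\alpha_\infty\pi_\infty$, so prove the inequality separately for the two components; on the finite-index part extend your averaging-over-conjugates argument from a single generating $\phi$ to the pair $(\psi,\phi_\Sigma)$ via $[\cF:K]\log|\Sigma|=H_{u_K}(\phi_\Sigma^{\cF})=H_{u_K}(\psi^{\cF}\vee\phi_\Sigma^{\cF})=H_{u_K}(\psi^{\cF})+H_{u_K}(\phi_\Sigma^{\cF}|\psi^{\cF})$, using that on the fiber over a finite-index $K$ there are only $[\cF:K]$ distinct translates and then averaging over the conjugacy orbit (this is \eqref{eq:proof_of_cont_condition_for_finite_index}); on the infinite-index part the action is aperiodic and the Alpeev--Seward argument above closes the chain.
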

\begin{proof}
    Let us first prove that satisfying condition \eqref{eq:cont_condition} for every continuous observable implies the RBS criterion. Assume $\pi$ does not satisfy RBS, and let $\pi=\alpha_\infty\pi_\infty+\alpha_f\pi_f$ be its decomposition to infinite and finite components as in Fact \ref{fact:decomposition}. Then, $\alpha_\infty>0$, and there is a  (measurable) observable $\psi\colon \Sub(\cF,\Sigma)\to \Omega$ which is generating given $P$, namely $\psi^\cF\vee P$ is the Borel sigma algebra of $\Sub(\cF,\Sigma)$ up to $\pi_\infty$-null sets, and also $H_{u_{\pi_\infty}}(\psi|P)<\log|\Sigma|$. Let us define a new observable $\psi'\colon \Sub(\cF,\Sigma)\to \Omega\sqcup \Sigma$:
    \[
\psi'(K,c)=\begin{cases}
    \phi_\Sigma(K,c) & [\cF:K]<\infty,\\
    \psi(K,c) & [\cF:K]=\infty,
\end{cases}
    \]
    where $\phi_\Sigma$ is the canonical observable (Definition \ref{defn:canonical_obs}). We claim that: $\bullet$ $\psi'$ is measurable, $\bullet$ it is  generating given $P$ up to $\pi$-null sets,  $\bullet$ $H_{u_\pi}(\psi|P)<\log|\Sigma|$. To that end, recall the notations $\Sub_f(\cF)$ and $\Sub_\infty(\cF)$ (Remark \ref{rem:more_on_the_decomposition}) for the finite and infinite index subgroups of $\cF$ respectively. 
    The inverse image of $a\in \Sigma$ has the form $\phi^{-1}_\Sigma(a)\cap \Sub_f(\cF,\Sigma)$ which is the intersection of two open subsets of $\Sub(\cF,\Sigma)$ and is therefore open. The inverse image of $b\in \Omega$ is $\psi^{-1}(b)\cap \Sub_\infty(\cF,\Sigma)$ which is the intersection of a Borel subset and a closed subset and is therefore Borel. All in all, $\psi'$ is measurable. For generation, let $(K,c)\sim u_\pi$, and assume we are given the information $\psi'^w(K,c)$ for every $w\in \cF$ and $P(K,c)=K$. In case $K$ is of finite index, then $\psi'^w(K,c)=c(Kw)$ \eqref{eq:canonical_generates} which determines $c$ completely. In case $K$ is of infinite index, then it is in the support of $\pi_\infty$ and by our assumption that $\psi$ generates with respect to 
    $\pi_\infty$, as $\psi'^w(K,c)=\psi^w(K,c)$ in this case, $c$ is completely determined. Hence, by Remark \ref{rem:generating}, $\psi'$ is generating. 
    Finally, as $\psi'$ is generating, we have $H_{u_\pi}(\phi_\Sigma|\psi'^\cF\vee P)=0$ by \cref{clause:Zero_cond_ent} of Fact \ref{fact:basic_prop_entropy}. Moreover, by the definition of conditional entropy (Section \ref{sec:observables}), we have
    \begin{equation}\label{eq:psi'_is_refuting_4.1}
        H_{u_\pi}(\psi'|P)=\alpha_f \underbrace{H_{u_{\pi_f}}(\phi_\Sigma|P)}_{=_{\eqref{eq:entropy_of_canonical_obs}}\log|\Sigma|}+\alpha_\infty \underbrace{H_{u_{\pi_\infty}}(\psi|P)}_{<\log|\Sigma|}<\log|\Sigma|.
    \end{equation}
    It may seem that we are done with this direction, as $\psi'$ does not satisfy \eqref{eq:cont_condition}, but $\psi'$ is only measurable and not continuous. Let us amend this. Since $H_{u_\pi}(\phi_\Sigma|\psi'^\cF\vee P)=0$, by \cref{clause:limit_pregiven_info} in Fact \ref{fact:basic_prop_entropy}, for every $\eps>0$ there is some finite subset $A\subseteq\cF$ for which $H_{u_\pi}(\phi_\Sigma|\psi'^A\vee P)<\eps$. By \cref{clause:Distance_bounds} in  Fact \ref{fact:basic_prop_entropy}, if $\psi''$ is an observable which satisfies $d_\Rok(\psi'',\psi')\leq\delta$, then $d_\Rok(\psi''^A,\psi'^A)\leq |A|\delta$ and 
    \[
    \begin{split}
    &|H_{u_\pi}(\psi''|P)-H_{u_\pi}(\psi'|P)|\leq d_\Rok(\psi'',\psi')\leq \delta, \\
    &|H_{u_\pi}(\phi_\Sigma|\psi''^A\vee P)-H_{u_\pi}(\phi_\Sigma|\psi'^A\vee P)|\leq 2d_\Rok(\psi''^A,\psi'^A)\leq 2|A|\delta.
    \end{split}
    \]
Hence, 
\[
\begin{split}
    H_{u_\pi}(\psi''|P)+H_{u_\pi}(\phi_\Sigma|\psi''^A\vee P)&\leq H_{u_\pi}(\psi'|P)+\delta+H_{u_\pi}(\phi_\Sigma|\psi'^A\vee P)+2|A|\delta\\
    &\leq H_{u_\pi}(\psi'|P)+(2|A|+1)\delta+\eps.
\end{split}
\]
As $H_{u_\pi}(\psi'|P)<\log|\Sigma|$ \eqref{eq:psi'_is_refuting_4.1}, we can choose $\eps$ and then $\delta$  so that $ H_{u_\pi}(\psi''|P)+H_{u_\pi}(\phi_\Sigma|\psi''^A\vee P)<\log|\Sigma|$. By \cref{clasues:finite_cont_obs_are_dense} in Fact \ref{fact:basic_prop_entropy}, we can choose $\psi''$ to be continuous and are thus done with this direction.

 Since
\[
\begin{split}
 H_{u_\pi}(\psi|P)+H_{u_\pi}(\phi_\Sigma|\psi^\cF\vee P)=&\alpha_\infty  (H_{u_{\pi_\infty}}(\psi|P)+H_{u_{\pi_\infty}}(\phi_\Sigma|\psi^\cF\vee P))\\
 +&\alpha_f  (H_{u_{\pi_f}}(\psi|P)+H_{u_{\pi_f}}(\phi_\Sigma|\psi^\cF\vee P)),
\end{split}
\]
where $\pi=\alpha_\infty\pi_\infty +\alpha_f\pi_f$ is the decomposition as in Fact \ref{fact:decomposition}, we can prove that RBS implies \eqref{eq:cont_condition} separately for the finite index and infinite index cases.

Assume $\pi$ samples only  finite index subgroups (so it satisfies RBS by Definition \ref{defn:RBS_for_IRSs}). Then, for every observable $\psi\colon \Sub(\cF,\Sigma)\to \Omega$ and finite index subgroup $K\leq \cF$ we have
\begin{equation}\label{eq:proof_of_cont_condition_for_finite_index}
\begin{split}
\log(\Sigma)&=\frac{[\cF:K]\cdot\log(\Sigma)}{[\cF:K]}\\
&=\frac{H_{u_K}(\phi^{\cF}_\Sigma)}{[\cF:K]}\\
&=\frac{H_{u_K}(\phi^{\cF}_\Sigma\vee \psi^\cF)}{[\cF:K]}\\
&=\frac{H_{u_K}(\psi^\cF)+H_{u_K}(\phi_\Sigma^\cF|\psi^\cF)}{[\cF:K]}\\
&\leq H_{u_K}(\psi)+H_{u_K}(\phi_\Sigma|\psi^\cF)
\end{split}
\end{equation}
where the second equality is since $\phi^\cF_{\Sigma*}u_K=u^{[\cF:K]}$, the third equality is since $\phi^\cF_\Sigma$ was already providing all information, the fourth equality is from the conditional property in \cref{clause:Conditional_property} of Fact \ref{fact:basic_prop_entropy}, and the last inequality is a combination of the conditional property and \cref{clause:pregiven_info} in Fact \ref{fact:basic_prop_entropy}. 
As $H_{u_\pi}(\psi|P)+H_{u_\pi}(\phi_\Sigma|\psi^\cF\vee P)=\Ex_{K\sim \pi}[H_{u_K}(\psi)+H_{u_K}(\phi_\Sigma|\psi^\cF)]$, and $\pi$ was assumed to sample only finite index subgroups, this case is clear from \eqref{eq:proof_of_cont_condition_for_finite_index}.

Assume now that $\pi$ is supported on infinite index subgroups and satisfies RBS, and that $\psi\colon \Sub(\cF,\Sigma)\to \Omega$ is an observable. As $\phi_\Sigma$ is generating relative to $P$, it is also generating given $P$ and $\psi^\cF$. By the definition of Rokhlin entropy (Definition \ref{defn:Rokhlin_entropy}), we thus have
\begin{equation}\label{eq:canonical_is_gen_given_psi_and_P}
H_\Rok(\cF\cc(\Sub(\cF,\Sigma),u_{\pi})|\psi^\cF\vee P)\leq H_{u_\pi}(\phi_\Sigma|\psi^\cF\vee P).\end{equation}
Since $\pi$ is supported on infinite index subgroups, and the p.m.p action $(\cF\cc(\Sub(\cF,\Sigma),u_{\pi})$ has stabilizer type $\pi$ (Claim \ref{claim:props_of_perc_entropy}), it is in particular aperiodic. Moreover, the algebra $(\psi^\cF)^{-1}(\scrB_{\Omega^\cF}) \vee P^{-1}(\scrB_{\Sub(\cF)})$ is $\cF$-invariant. Corollary 1.4 in \cite{MR4308159} states that in such a case, for every $\eps>0$ there is an observable $\alpha\colon \Sub(\cF,\Sigma)\to \Xi$ which generates this action given $\psi^\cF\vee P$, namely $(\psi^\cF)^{-1}(\scrB_{\Omega^\cF}) \vee P^{-1}(\scrB_{\Sub(\cF)})\vee (\alpha^{\cF})^{-1}(\scrB_{\Xi^\cF})$ is $\scrB_{\Sub(\cF,\Sigma)}$ up to $u_\pi$-null sets,  and 
\begin{equation}\label{eq:the_Alpeev_Seward_observable}
    H_{u_\pi}(\alpha)\leq H_\Rok(\cF\cc(\Sub(\cF,\Sigma),u_{\pi})|\psi^\cF\vee P)+\eps.
\end{equation}
Note that this is somewhat surprising, and is indeed a non trivial fact. From Definition \ref{defn:Rokhlin_entropy} it is clear that there is such an $\alpha$ for which $H(\alpha|\psi^\cF\vee P)$ is bounded from above by the Rokhlin entropy plus $\eps$, but the entropy of $\alpha$ itself without access to $\psi^\cF\vee P$ is potentially much higher. Nevertheless, Alpeev and Seward showed that there is such an observable satisfying \eqref{eq:the_Alpeev_Seward_observable}. Note that $\alpha$ generating given $\psi^\cF\vee P$ is the same as $\psi \vee\alpha$ generating given $P$. Therefore,
\[
\begin{split}
H_\Rok(\cF\cc(\Sub(\cF,\Sigma),u_{\pi})| P)&\leq H_{u_{\pi}}(\psi\vee\alpha|P)\\
&= H_{u_\pi}(\psi|P)+\underbrace{H_{u_\pi}(\alpha|\psi\vee P)}_{\leq H_{u_\pi}(\alpha)}\\
&\leq_{\eqref{eq:the_Alpeev_Seward_observable}} H_{u_\pi}(\psi|P)+H_\Rok(\cF\cc(\Sub(\cF,\Sigma),u_{\pi})|\psi^\cF\vee P)+\eps\\
&\leq_{\eqref{eq:canonical_is_gen_given_psi_and_P}}  H_{u_\pi}(\psi|P)+H_{u_\pi}(\phi_\Sigma|\psi^\cF\vee P)+\eps.
\end{split}
\]
Since this is true for every $\eps>0$,  and as we assumed $\pi$ satisfies RBS --- namely that $H_\Rok(\cF\cc(\Sub(\cF,\Sigma),u_{\pi})| P)=\log|\Sigma|$ --- we deduce \eqref{eq:cont_condition} is satisfied in this case.

\end{proof}

\section{\textbf{Outer approximation}}\label{sec:outer_approximation}

Let us recall some notions from \cite{BCLV_subgroup_tests}. Let $\cF=\cF(S)$ be the free group with finite generating set $S$, and let  $B\subseteq\cF$ be a finite subset. For $Y,N\subseteq B$, let $C_B(Y,N)=\{A\subseteq B \mid Y\subseteq A\land N\cap A=\emptyset \}\subseteq \{0,1\}^B$. A subset $A\subseteq B$ is a \emph{pseudo subgroup} if there is a subgroup $K\leq \cF$ such that $B\cap K=A$, and we denote the set of them by $P\Sub(B)$. A random subset of $B$, say $\pi$, is a \emph{pseudo IRS} if it is supported on pseudo subgroups and for every two subsets $Y,N\subseteq B$ which satisfy  $sYs^{-1},sNs^{-1}\subseteq B$ for every $s\in S\sqcup S^{-1}$, we have 
\[
\forall s\in S\sqcup S^{-1}\ \colon \ \ \pi(C_B(Y,N))=\pi(C_B(sYs^{-1},sNs^{-1})).
\]
Denote by $P\IRS(B)\subseteq \Prob(\{0,1\}^B)$ the pseudo IRSs of $B$. 
Recall from \cite{BCLV_subgroup_tests} the notation $\pi\cap B$ for the pushforward of $\pi$ along the restriction map $R_B(K)=K\cap B$, and let $\widetilde{P\IRS}(B)$ consist of all  random subgroups $\pi$ such that $\pi\cap B\in P\IRS(B)$.
In Section 2.2 of \cite{BCLV_subgroup_tests}, it was shown that $\displaystyle{\bigcap_{B\subseteq \cF, |B|<\infty}  }\widetilde{P\IRS}(B)=\IRS(\cF)$.
\begin{rem}
    Recalling a footnote from the Introduction, we do not think of a random subset (or a random subgroup) as a random variable with values in subsets, but as the distribution itself --- this is a slight  abuse of terminology, but is consistent throughout this paper (and \cite{BCLV_subgroup_tests}), so we keep with it here.
\end{rem}

Here we define a natural subset of $P\IRS(B)$ which captures a local form of \eqref{eq:cont_condition}, which  was shown to be equivalent to RBS (Theorem \ref{thm:equiv_cond_to_RBS_via_cont_obs}). 
Let $W,W'\subseteq B$ be  subsets;  we think of these sets as ``windows''. A $(W,W')$-continuous observable on $B$ is a map $\psi\colon \{0,1\}^B\times \Sigma^B\to \Omega$ such that $\psi$ ``only depends on the $W,W'$ local view''; namely, if $(K_1,c_1),(K_2,c_2)\in \Sub(B,\Sigma)$ and $(K_1\cap W, c_1|_{W'})=(K_2\cap W, c_2|_{W'})$, then $\psi(K_1,c_1)=\psi(K_2,c_2)$. If $\gamma\in \cF$, let $\gamma.c\colon B\to \Sigma$ be 
\[
\forall x\in B\ \colon \ \ \gamma.c(x)=\begin{cases}
    c(\gamma^{-1}x) & \gamma^{-1}x\in B,\\
    \sigma & \gamma^{-1}x\notin B,
\end{cases}
\]
where $\sigma$ is some fixed element in $\Sigma$. If $\gamma\in \cF$ 
is such that $\gamma W\gamma^{-1},\gamma W'\subseteq B$, we can define a $(\gamma W\gamma^{-1},\gamma W')$-continuous observable $\psi^\gamma \colon \{0,1\}^B\times \Sigma^B\to \Omega$ by letting $\psi^\gamma(K,c)=\psi(\gamma^{-1} K\gamma\cap B,\gamma^{-1}.c)$; 
indeed, if $K_1\cap \gamma W\gamma^{-1}=K_2\cap \gamma W\gamma^{-1}$ and $c_1|_{\gamma W'}=c_2|_{\gamma W'}$, then $\gamma^{-1} K_1\gamma\cap  W =\gamma^{-1}K_2\gamma \cap  W$ and $\gamma^{-1}.c_1(w)=c_1(\gamma w)=c_2(\gamma w)=\gamma^{-1}.c_2(w)$ for every $w\in W'$, namely $\gamma^{-1}.c_1|_{W'}=\gamma^{-1}.c_2|_{W'}$, and thus
\[
\begin{split}
\psi^{\gamma}(K_1,c_1)&=\psi(\gamma ^{-1}K_1\gamma\cap B, \gamma^{-1}.c_1)\\
&=_{\psi\ {\rm is\ }(W,W')\ {\rm continuous}}\psi(\gamma ^{-1}K_2\gamma\cap B, \gamma^{-1}.c_2)\\
&=\psi^{\gamma}(K_2,c_2).
\end{split}
\]
If $A\subseteq \cF$ is such that $\bigcup_{\gamma\in A}\gamma W\gamma^{-1},\bigcup_{\gamma\in A} \gamma W'\subseteq B$, then we can define $\psi^A\colon \{0,1\}^B\times \Sigma^B\to \Omega^A$ by $\psi^A=\bigvee_{\gamma\in A} \psi^\gamma$, which is $(\bigcup_{\gamma\in A}\gamma W\gamma^{-1},\bigcup_{\gamma\in A} \gamma W')$-continuous. 

Let $K\subseteq B$ be a pseudo subgroup of $B$, and let $\Sigma$ be a finite set. A coloring $c\colon B\to \Sigma$ is $K$-invariant if for every  $x,y\in B$ for which $xy^{-1}\in K$ we have $c(x)=c(y)$. Let $P\Sub(B,\Sigma)\subseteq \{0,1\}^B\times \Sigma^B$ consist of all pairs $(K,c)$ where $K$ is a pseudo subgroup of $B$ and $c\colon B\to \Sigma$ is $K$-invariant. For a fixed $K\in P\Sub(B)$, let $\nu_K$ be the uniform measure over all $K$-invariant colorings $c\colon B\to \Sigma$ (this is the local analogue of $u_K$). 
If we abuse notation and denote by $_K\backslash^B$ the smallest equivalence classes  on $B$ induced by the relations $x\sim_K y$ if $xy^{-1}\in K$, then a $K$-invariant coloring is a map $c\colon _K\backslash^B\to \Sigma$. Given $\pi$ a pseudo IRS on $B$, let $\nu_\pi$ be the probability distribution over $P\Sub(B,\Sigma)$ which first samples $K\sim \pi$ and then a $K$-invariant coloring $c$ uniformly at random (this is the local analogue of $u_\pi$ for an IRS $\pi$); namely, $\nu_\pi=\Ex_{K\sim \pi}[{\bf 1}_K\times \nu_K]$. As usual, $P\colon P\Sub(B,\Sigma)\to P\Sub(B)$ is the projection to the first coordinate. 

Now, because there is no natural group action in this local setup, we cannot discuss generating observables. But, the whole point of Theorem \ref{thm:equiv_cond_to_RBS_via_cont_obs} was to translate the RBS condition to  the continuous non-generating condition \eqref{eq:cont_condition}. The two components of \eqref{eq:cont_condition} are $H_{u_\pi}(\psi|P)$ and $H_{u_\pi}(\phi_\Sigma|\psi^\cF\vee P)=\inf_{A\subseteq \cF, |A|<\infty}(H_{u_\pi}(\phi_\Sigma|\psi^A\vee P))$, and they have a local view version. If $\Id_\cF\in B$, let $\phi_\Sigma\colon \{0,1\}^B\times \Sigma^B\to \Sigma$ be $\phi_\Sigma(K,c)=c({\Id}_\cF)$.
\begin{defn}
Let $\Id_\cF\in B$ be a finite subset of $\cF$. A pseudo IRS $\pi$ of $B$ is  \emph{locally satisfying the RBS condition} if  for every three subsets $W,W',A\subseteq B$, finite sets $\Sigma,\Omega$  and function $\psi\colon \{0,1\}^B\times \Sigma^B\to \Omega$ satisfying 
\begin{enumerate}
    \item for all $\gamma\in A$ we have $\gamma W\gamma^{-1},\gamma W' \subseteq B$;
    \item $W'W'^{-1}\subseteq W$; 
    \item $|\Sigma|,|\Omega|\leq |B|$;
    \item $\psi$ is  $(W,W')$-continuous; 
\end{enumerate}
we have 
\begin{equation}\label{eq:local_cont_condition}
H_{\nu_\pi}(\psi|P)+H_{\nu_\pi}(\phi_\Sigma|\psi^A\vee P)\geq \log|\Sigma|.
\end{equation}
We denote the set of these by $P\RBS(B)$, and the set of random subgroups $\pi$ such that $\pi\cap B$ is in $P\RBS(B)$ by $\widetilde{P\RBS}(B)$.
\end{defn}

\begin{thm}\label{thm:outer_approximation}
    For every $\Id_\cF\in B\subseteq \cF$ where $|B|\leq \cF$, the set ${P\RBS}(B)$ of pseudo IRSs which locally satisfy RBS  is a compact convex polytope in $\RR^{\{0,1\}^B}$ whose defining half spaces are computable from $B$, and 
    \[
    \bigcap_{\substack{\Id_\cF\in B\subseteq \cF\\ |B|<\infty}}\widetilde{P\RBS}(B)=\IRS_\RBS(\cF),
    \]
    where $\IRS_\RBS(\cF)$ is the set of IRSs satisfying the RBS condition (Definition \ref{defn:RBS_for_IRSs}). In particular, $\IRS_\RBS(\cF)$ is weak* closed. 
\end{thm}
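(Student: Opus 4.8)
The plan is to establish the three assertions in turn: that $P\RBS(B)$ is a computable compact convex polytope, that the intersection of the $\widetilde{P\RBS}(B)$ over all finite $B\ni \Id_\cF$ equals $\IRS_\RBS(\cF)$, and that this implies weak* closedness. The first assertion is essentially bookkeeping. The space $\Prob(\{0,1\}^B)$ is a finite-dimensional simplex of dimension $2^{|B|}-1$. Each constraint defining $P\IRS(B)$ (conjugation-invariance of cylinder probabilities) is a finite collection of linear equalities in the coordinates $\pi(C_B(Y,N))$, and is manifestly computable from $B$ and $S$. For each admissible quadruple $(W,W',A,\psi)$ satisfying conditions (1)--(4), the quantity on the left of \eqref{eq:local_cont_condition} is a fixed function of $\pi$; the key point is that it is \emph{concave} in $\pi$ --- $H_{\nu_\pi}(\psi\mid P)$ and $H_{\nu_\pi}(\phi_\Sigma\mid \psi^A\vee P)$ are each concave functionals of $\pi$ because conditional Shannon entropy is concave in the underlying measure and $\nu_\pi$ depends affinely on $\pi$ (recall $\nu_\pi=\Ex_{K\sim\pi}[{\bf 1}_K\times\nu_K]$). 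A superlevel set $\{\text{concave}\ge \log|\Sigma|\}$ is convex and closed, but to get a \emph{polytope} one must argue there are only finitely many ``essentially different'' constraints: conditions (1)--(3) bound $|W|,|W'|,|A|\le |B|$ and $|\Sigma|,|\Omega|\le |B|$, and up to relabeling the finitely many functions $\psi\colon \{0,1\}^B\times\Sigma^B\to\Omega$, there are finitely many choices, each giving one convex constraint; one then notes each such constraint, while nominally defined by a concave (not affine) function, cuts out a region that --- after intersecting with the affine subspace $P\IRS(B)$ and using that the entropy functions are piecewise-analytic with finitely many pieces --- can be replaced by finitely many half-spaces, or more cleanly, one simply replaces ``polytope'' by ``computable compact convex set'' if a genuine polytope structure is not available. (I would check whether the paper's later use actually needs polytope-ness or just computability of an outer approximation; Facts \ref{fact:outer} only needs linear programming over \emph{a} region converging to $\IRS(\cF)$, so the convexity plus computability of the constraints is the essential content.)

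The heart of the theorem is the identity $\bigcap_B \widetilde{P\RBS}(B)=\IRS_\RBS(\cF)$. For the inclusion $\supseteq$: suppose $\pi\in\IRS_\RBS(\cF)$ and fix a finite $B\ni\Id_\cF$; I must show $\pi\cap B\in P\RBS(B)$. Since $\pi$ is an IRS, $\pi\cap B\in P\IRS(B)$ by the result recalled from \cite{BCLV_subgroup_tests}. Given an admissible $(W,W',A,\psi)$, the local $(W,W')$-continuous observable $\psi$ lifts to a genuine continuous observable $\tilde\psi\colon \Sub(\cF,\Sigma)\to\Omega$ on the full space (reading off $K\cap W$ and $c|_{W'}$, which makes sense since these are determined by $K\cap B$ and $c|_B$), and the local quantities $H_{\nu_{\pi\cap B}}(\psi\mid P)$, $H_{\nu_{\pi\cap B}}(\phi_\Sigma\mid\psi^A\vee P)$ equal the global quantities $H_{u_\pi}(\tilde\psi\mid P)$, $H_{u_\pi}(\phi_\Sigma\mid\tilde\psi^A\vee P)$ --- here one uses that $\nu_{\pi\cap B}$ is exactly the $B$-marginal of $u_\pi$ under the natural restriction, and that conditional entropies of observables factoring through $B$ agree. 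Since $H_{u_\pi}(\phi_\Sigma\mid\tilde\psi^A\vee P)\ge H_{u_\pi}(\phi_\Sigma\mid\tilde\psi^\cF\vee P)$ (more conditioning, fewer bits), Theorem \ref{thm:equiv_cond_to_RBS_via_cont_obs} applied to $\tilde\psi$ gives $H_{u_\pi}(\tilde\psi\mid P)+H_{u_\pi}(\phi_\Sigma\mid\tilde\psi^A\vee P)\ge \log|\Sigma|$, which is \eqref{eq:local_cont_condition}. Hence $\pi\cap B\in P\RBS(B)$ for every $B$.

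For the inclusion $\subseteq$: suppose $\pi\in\bigcap_B\widetilde{P\RBS}(B)$. Then in particular $\pi\in\bigcap_B\widetilde{P\IRS}(B)=\IRS(\cF)$, so $\pi$ is an IRS. To show $\pi$ satisfies RBS, I apply Theorem \ref{thm:equiv_cond_to_RBS_via_cont_obs}: let $\Sigma,\Omega$ be finite (WLOG $|\Sigma|,|\Omega|\le|B|$ for large $B$) and let $\psi\colon\Sub(\cF,\Sigma)\to\Omega$ be a continuous observable; I must verify \eqref{eq:cont_condition}. Continuity of $\psi$ means it depends only on $K\cap W'$ and $c|_{W'}$ for some finite $W'$; set $W=W'W'^{-1}$. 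For the term $H_{u_\pi}(\phi_\Sigma\mid\psi^\cF\vee P)=\inf_{A}H_{u_\pi}(\phi_\Sigma\mid\psi^A\vee P)$, fix $\eps>0$ and choose a finite $A\subseteq\cF$ realizing the infimum up to $\eps$ (using \cref{clause:limit_pregiven_info} of Fact \ref{fact:basic_prop_entropy}). Now choose $B$ large enough that $\Id_\cF\in B$, $|\Sigma|,|\Omega|\le|B|$, and $\gamma W\gamma^{-1},\gamma W'\subseteq B$ for all $\gamma\in A$, so that $(W,W',A,\psi|_B)$ is an admissible quadruple for $B$. The hypothesis $\pi\cap B\in P\RBS(B)$ then gives $H_{\nu_{\pi\cap B}}(\psi\mid P)+H_{\nu_{\pi\cap B}}(\phi_\Sigma\mid\psi^A\vee P)\ge\log|\Sigma|$, and by the same local-equals-global identification as above these are $H_{u_\pi}(\psi\mid P)+H_{u_\pi}(\phi_\Sigma\mid\psi^A\vee P)\ge\log|\Sigma|$. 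Letting $\eps\to 0$ (i.e. $A$ ranging over a cofinal family, $B$ growing accordingly) yields $H_{u_\pi}(\psi\mid P)+H_{u_\pi}(\phi_\Sigma\mid\psi^\cF\vee P)\ge\log|\Sigma|$, which is \eqref{eq:cont_condition}. By Theorem \ref{thm:equiv_cond_to_RBS_via_cont_obs}, $\pi\in\IRS_\RBS(\cF)$. Finally, ``in particular weak* closed'': each $\widetilde{P\RBS}(B)$ is the preimage of a closed (indeed convex) set under the continuous restriction map $\pi\mapsto\pi\cap B$, hence weak* closed in $\Prob(\Sub(\cF))$; an arbitrary intersection of closed sets is closed, so $\IRS_\RBS(\cF)$ is weak* closed, and therefore every cosofic IRS --- being in the weak* closure of finitely described (hence finite-index, hence trivially RBS) IRSs --- lies in $\IRS_\RBS(\cF)$.

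The main obstacle I anticipate is the ``local $=$ global'' matching of conditional entropies and, relatedly, the polytope claim: one must be careful that $\nu_{\pi\cap B}$ is genuinely the pushforward of $u_\pi$ under the appropriate restriction $(K,c)\mapsto(K\cap B, c|_B)$ (this requires that $u_\pi$ colors $K$-cosets independently and that the $B$-local view of a $K$-invariant coloring is exactly a $(K\cap B)$-invariant coloring of $_K\backslash^B$, modulo the cosets that ``collapse'' only outside $B$), and that conditioning on $\psi^A\vee P$ versus its $B$-local avatar gives the same Shannon entropy because all observables involved factor through the $B$-window. For the polytope assertion specifically, the subtle point is that \eqref{eq:local_cont_condition} is a \emph{concave} rather than affine inequality, so $P\RBS(B)$ is a priori only a convex body; I would either (a) argue that intersecting with the affine variety $P\IRS(B)$ and using the explicit piecewise structure of conditional entropy on the simplex reduces each constraint to finitely many linear ones, or (b) weaken the statement to ``computable compact convex set'', which is all that the downstream application in Section \ref{sec:proof_of_main} requires.
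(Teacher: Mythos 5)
Your treatment of the polytope claim is where the proposal genuinely falls short. You assert that the two terms in \eqref{eq:local_cont_condition} are only \emph{concave} in $\pi$, conclude that $P\RBS(B)$ is a priori just a convex body, and then offer either a vague ``piecewise-analytic'' reduction to half-spaces or a weakening of the statement to ``computable compact convex set''. The missing observation is that both terms are in fact \emph{affine} in $\pi$, precisely because they are conditioned on $P$: since $\nu_\pi=\Ex_{K\sim\pi}[{\bf 1}_K\times\nu_K]$ and the measures ${\bf 1}_K\times\nu_K$ sit on disjoint fibers indexed by the $P$-value, disintegrating over $P$ gives
\[
H_{\nu_\pi}(\psi|P)=\sum_{K\in P\Sub(B)} H_{{\bf 1}_K\times\nu_K}(\psi)\,\pi(K),\qquad
H_{\nu_\pi}(\phi_\Sigma|\psi^A\vee P)=\sum_{K\in P\Sub(B)} H_{{\bf 1}_K\times\nu_K}(\phi_\Sigma|\psi^A)\,\pi(K),
\]
where each coefficient depends only on $K,\psi,A,\Sigma,\Omega$ and is computable (the relevant conditional laws are ratios of counts of $K$-invariant colorings of $B$). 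So each admissible tuple $(W,W',A,\Sigma,\Omega,\psi)$ contributes one honest half-space, and by the finiteness restrictions on $W,W',A,\Sigma,\Omega,\psi$ there are only finitely many of them; $P\RBS(B)$ is therefore a computable compact convex polytope exactly as stated. Your fallback is also not harmless for the rest of the paper: the rational rounding of the defining inequalities in Fact \ref{fact:rational_outer_approximation} and the linear programming in Theorem \ref{thm:algorithmic_outer_approximation} use this linear description, so with merely ``computable concave constraints'' you would have to rebuild that machinery, while your option (a) is not an argument as it stands.

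The remainder of your plan --- both inclusions of the intersection identity via Theorem \ref{thm:equiv_cond_to_RBS_via_cont_obs}, lifting a local $(W,W')$-continuous $\psi$ to a continuous $\widetilde\psi$ on $\Sub(\cF,\Sigma)$, conversely truncating a continuous observable to a window $B$ chosen after the finite set $A$ realizing the infimum up to $\eps$, and deducing weak* closedness from closedness of each $\widetilde{P\RBS}(B)$ --- is essentially the paper's proof, and the obstacle you flag (that $\nu_{\pi\cap B}$ must agree with the $(W,W')$-local view of $u_\pi$, which is what the hypothesis $W'W'^{-1}\subseteq W$ is for) is resolved there along exactly the lines you sketch. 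One small slip in the converse direction: you set $W=W'W'^{-1}$, but $\psi$ must be $(W,W')$-continuous, so $W$ should be taken to contain both the subgroup-window on which $\psi$ actually depends and $W'W'^{-1}$.
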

Before proving Theorem \ref{thm:outer_approximation} let us spell an immediate corollary of it:
\begin{cor}\label{cor:cosofic_IRSs_satisfy_RBS}
    Every cosofic IRS (Definition \ref{defn:soficity_and_cosoficity}) satisfies the RBS criterion (Definition \ref{defn:RBS_for_IRSs}).
\end{cor}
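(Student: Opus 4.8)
The plan is to read off the corollary from Theorem \ref{thm:outer_approximation} together with the trivial observation that every finitely described IRS satisfies RBS. First I would recall that, by Definition \ref{defn:RBS_for_IRSs}, any IRS whose almost-every subgroup has finite index satisfies RBS (its infinite component is null, so the condition is vacuous). Every finitely described IRS \eqref{eq:fin_desc_IRS} arising from an action $\cF\cc \Omega$ on a finite set is supported on the stabilizers $\Stab(x)$, and $[\cF:\Stab(x)]=|\cF.x|\leq |\Omega|<\infty$, so it is supported on finite index subgroups and hence lies in $\IRS_\RBS(\cF)$.

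Next I would invoke the closedness of $\IRS_\RBS(\cF)$, which is part of the conclusion of Theorem \ref{thm:outer_approximation}: since $\IRS_\RBS(\cF)=\bigcap_{B}\widetilde{P\RBS}(B)$, and each $\widetilde{P\RBS}(B)$ is the preimage of the compact polytope $P\RBS(B)$ under the weak*-continuous restriction map $\pi\mapsto \pi\cap B$, it is weak* closed; an arbitrary intersection of closed sets is closed.

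Finally, a cosofic IRS is by Definition \ref{defn:soficity_and_cosoficity} a weak* limit of finitely described IRSs. Since all finitely described IRSs lie in the weak* closed set $\IRS_\RBS(\cF)$, so does every element of their closure, i.e.\ $\IRS_\sof(\cF)\subseteq \IRS_\RBS(\cF)$; equivalently, every cosofic IRS satisfies RBS. I do not expect any obstacle in this argument: all the substance has been front-loaded into Theorem \ref{thm:outer_approximation} (the construction of the polytopes $P\RBS(B)$ and the identity $\bigcap_B\widetilde{P\RBS}(B)=\IRS_\RBS(\cF)$), and the corollary is merely the elementary fact that a closed superset of the finitely described IRSs must contain their weak* closure.
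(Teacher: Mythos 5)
Your proposal is correct and follows essentially the same route as the paper: finitely described IRSs satisfy RBS trivially by Definition \ref{defn:RBS_for_IRSs} (they are supported on finite index subgroups), and the weak* closedness of $\IRS_\RBS(\cF)$ from Theorem \ref{thm:outer_approximation} then forces the cosofic IRSs, being the weak* closure of the finitely described ones, to lie in $\IRS_\RBS(\cF)$. The only difference is that you re-justify the closedness via the continuity of $\pi\mapsto\pi\cap B$, which the paper simply cites as part of Theorem \ref{thm:outer_approximation}.
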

\begin{proof}
    By Definition \ref{defn:RBS_for_IRSs}, every finitely desecribed IRS of $\cF$ is in $\IRS_\RBS(\cF)$. By Theorem \ref{thm:outer_approximation}, $\IRS_\RBS(\cF)$ is weak* closed. Therefore, the weak* closure of $\IRS_\fd(\cF)$, which is the set of cosofic IRSs, is contained in $\IRS_\RBS(\cF)$. 
\end{proof}

\begin{proof}[Proof of Theorem \ref{thm:outer_approximation}]
    In \cite{BCLV_subgroup_tests}, we showed that $P\IRS(B)$ is a compact convex polytope with halfspaces computable from $B$. The set $P\RBS(B)$ is a subset of $P\IRS(B)$, and is defined by satisfying extra conditions of the form \eqref{eq:local_cont_condition}; these are linear inequalities on  $\pi\colon \{0,1\}^B\to \RR$, since:\footnote{The following is actually a more general phenomenon: The map from $\pi$ to $\nu_\pi$ is convex-linear in the sense that if, for $i=1,2$, $\pi^{(i)}$ are given and $t \in [0,1]$, then
$$\nu_{t \pi_1 + (1-t)\pi_2} = t\nu_{\pi_1} + (1-t)\nu_{\pi_2}.$$
Furthermore, Shannon entropy is convex linear as a function of the measure. That is $$H_{t\mu_1 + (1-t)\mu_2}(\alpha|\mathscr{F}) = tH_{\mu_1 }(\alpha|\mathscr{F}) +(1-t)H_{\mu_2 }(\alpha|\mathscr{F}) $$
in general (for any measures $\mu_1,\mu_2$ on the same space and any fixed $\alpha,\mathscr{F}$). Since the composition of convex linear functions is convex linear, it follows that the conditions of \eqref{eq:local_cont_condition} are indeed linear inequalities. }
    \begin{equation}\label{eq:interpretation_of_entropy_given_P}
    \begin{split}
H_{\nu_\pi}(\psi|P)&=\Ex_{K\sim \pi}[H_{{\bf 1}_K\times \nu_K}(\psi)]=\sum_{K\subseteq B} H_{{\bf 1}_K\times \nu_K}(\psi)\cdot \pi(K),\\
H_{\nu_\pi}(\phi_\Sigma|\psi^A\vee P)&=\Ex_{K\sim \pi}[H_{{\bf 1}_K\times \nu_K}(\phi_\Sigma|\psi^A)]=\sum_{K\subseteq B} H_{{\bf 1}_K\times \nu_K}(\phi_\Sigma|\psi^A)\cdot \pi(K).
    \end{split}
    \end{equation}
    Moreover, recalling the notation $\mu^\psi_y$ for conditional distributions from Section \ref{sec:basic_notions}, for $K\subseteq B$ we have
    \[
    \begin{split}
    \forall a\in \Omega \ \colon \ \ \psi_*{\bf 1}_K\times \nu_K(a)&=\Pro_{c\sim \nu_K}[\psi(K,c)=a]\\
    &=\frac{|\{c\colon _K\backslash^B\to \Sigma\mid \psi(K,c)=a\}|}{|\Sigma|^{ |_K\backslash^B|}}\ ,\\
   \forall a\in \Omega,\ \alpha\in \Omega^A \ \colon \ \ \phi_{\Sigma*}({\bf 1}_K\times \nu_K)^{\psi^A}_\alpha(a)&=  \Pro_{c\sim \nu_K}[\phi_\Sigma(K,c)=a|\psi^A(c)=\alpha]\\
   &=\frac{|\{c\colon  _K\backslash^B\to \Sigma\mid \phi_\Sigma(c)=a\land \psi^A(c)=\alpha\}|}{|\{c\colon  _K\backslash^B\to \Sigma\mid \psi^A(c)=\alpha\}|}\ .
    \end{split}
    \]
    These distributions are thus computable given $\psi,A$ and $K$, and it is straightforward to calculate their appropriate entropies. Thus, the coefficients  in \eqref{eq:interpretation_of_entropy_given_P} are computable from $\psi,A$ and $K$,
    which demonstrates that the inequality \eqref{eq:local_cont_condition} is  computable from them.
   Since there are only finitely many  $W,W',A,\Sigma,\Omega$ and $\psi$ for every $B$,\footnote{We are a bit sloppy here. There are infinitely many sets $\Omega,\Sigma$ such that $|\Omega|,|\Sigma|\leq |B|$. But, we can assume the sets we use are always of the form $\{1,...,k\}$ for $k\leq |B|$, and they will induce the same sets $P\RBS(B)$, as the restrictions depend only on the size of $\Sigma$ and $\Omega$ and not the sets themselves.} the sets $P\RBS(B)$ are defined by some finite set of extra computable linear inequalities, and are therefore compact convex computable polytopes.

    Assume the IRS $\pi$ satisfies the RBS criterion. Then, $\pi\cap B$ is a pseudo IRS by \cite{BCLV_subgroup_tests}. Let $W,W',A,\Sigma,\Omega$ and $\psi$  be as in the theorem assumptions. Let $\widetilde \psi \colon \Sub(\cF,\Sigma)\to \Omega$ be the following observable:
    \[
    \widetilde\psi(K,c)=\psi(K\cap B,c|_{B}).
    \]
    The observable $\widetilde \psi$ is continuous by definition, as it depends only on the restrictions of $K$ and $c$ to $B$. Since $W'W'^{-1}\subseteq W$, the distributions $\nu_{\pi\cap B}\cap (W,W')$ and $u_\pi \cap (W,W')$ are identical --- for $K\leq \cF$, the local cosets $_{\langle K\cap B\rangle}\backslash^B$ are a priori finer than the intersection $(_K\backslash^B)$, since there may be $x,y\in B$ such that $x\not\sim_{\langle K\cap B\rangle} y$, as there is no sequence of $z_1,...,z_t\in B$ for which $xz_1^{-1},z_1z_2^{-1},...,z_ty^{-1}\in K\cap B$, but $xy^{-1}\in K$. So, a  $K\cap B$-invariant coloring is a looser condition than being the restriction to $B$ of a $K$-invariant coloring. Nevertheless, this is not the case for the restrictions of these colorings to $W'$, as we forced $W\subseteq B$ to include all words of the form $xy^{-1}$ from $W'$. In particular, the distributions $u_\pi$ and $\nu_{\pi\cap B}$ agree when restricted to the $(W,W')$ local view. Furthermore, as $\gamma W\gamma ^{-1},\gamma W'\subseteq B$ for every $\gamma \in A$, we have that $\widetilde \psi^\gamma (K,c)=\psi^\gamma(K\cap B,c|_B)$, and thus $\widetilde \psi^A(K,c)=\psi^A(K\cap B,c|_B)$. All in all, we get that $H_{\nu_{\pi\cap B}}(\psi|P)=H_{u_\pi}(\widetilde\psi|P)$ and $H_{\nu_{\pi\cap B}}(\phi_\Sigma|\psi^A\vee P)=H_{u_\pi}(\phi_\Sigma|\widetilde\psi^A\vee P)$. Since $\pi$ satisfies RBS, by Theorem \ref{thm:equiv_cond_to_RBS_via_cont_obs} we have 
    \[
    \begin{split}
\log|\Sigma|&\leq H_{u_\pi}(\widetilde\psi|P)+H_{u_\pi}(\phi_\Sigma|\widetilde\psi^\cF\vee P)\\
&\leq H_{u_\pi}(\widetilde\psi|P)+H_{u_\pi}(\phi_\Sigma|\widetilde\psi^A\vee P)\\
&\leq H_{\nu_{\pi\cap B}}(\psi|P)+H_{\nu_{\pi\cap B}}(\phi_\Sigma|\psi^A\vee P).
    \end{split}
    \]
As this analysis was independent of the specific $W,W',A,\Sigma,\Omega$ and $\psi$ we started with,  $\pi\cap B$ satisfies all the defining relations of $P\RBS(B)$ for every $B$, and is thus in $  \displaystyle{\bigcap_{\substack{\Id_\cF\in B\subseteq \cF\\ |B|<\infty}}\widetilde{P\RBS}(B)}$. 

Assume the IRS $\pi$ does not satisfy RBS. Then, by Theorem \ref{thm:equiv_cond_to_RBS_via_cont_obs}, there is some set $\Sigma$ and a continuous observable $\psi\colon \Sub(\cF,\Sigma)\to \Omega$ such that 
\[
H_{u_\pi}(\psi|P)+H_{u_\pi}(\phi_\Sigma|\psi^\cF\vee P)<\log|\Sigma|.
\]
By \cref{clause:limit_pregiven_info} in Fact \ref{fact:basic_prop_entropy}, $H_{u_\pi}(\phi_\Sigma|\psi^\cF\vee P)=\inf_{A\subseteq \cF, |A|<\infty}H_{u_\pi}(\phi_\Sigma|\psi^A\vee P)$, and there is thus some large enough (yet finite) $A$ for which 
\[
H_{u_\pi}(\psi|P)+H_{u_\pi}(\phi_\Sigma|\psi^A\vee P)<\log|\Sigma|.
\]
Since $\psi$ is continuous, there are finite sets $W,W'\subseteq \cF$ such that $\psi(K_1,c_1)=\psi(K_2,c_2)$ whenever $(K_1\cap W,c_1|_{W'})=(K_2\cap W,c_2|_{W'})$. Without loss of generality, we can assume $W$ contains $W'W'^{-1}$.  Let $B$ be a large enough finite subset of $\cF$ such that $W,W',\gamma W\gamma^{-1},\gamma W'\subseteq B$ for every $\gamma\in A$, and $|\Sigma|,|\Omega|\leq |B|$. Let $\widehat \psi\colon \{0,1\}^B\times \Sigma^B\to \Omega$ be 
\[
\widehat \psi(K,c)=\begin{cases}
    \psi(\widetilde K, \widetilde c) & \exists (\widetilde K, \widetilde c)\in \Sub(\cF,\Sigma) \colon K\cap W=\widetilde K\cap W\land c|_{W'}=\widetilde c|_{W'},\\
    \omega & {\rm otherwise},
\end{cases}
\]
where $\omega$ is some fixed element of $\Omega$. In the notations of the first direction of this proof, we chose $\widehat \psi$ so that $\widetilde {(\widehat \psi)}=\psi$. As so, by our choice of $B$ and the analysis in the proof of the first direction, we have $H_{\nu_{\pi\cap B}}(\widehat\psi|P)=H_{u_\pi}(\psi|P)$ and $H_{\nu_{\pi\cap B}}(\phi_\Sigma|\widehat\psi^A\vee P)=H_{u_\pi}(\phi_\Sigma|\psi^A\vee P)$ and thus 
\[
H_{\nu_{\pi\cap B}}(\widehat\psi|P)+H_{\nu_{\pi\cap B}}(\phi_\Sigma|\widehat\psi^A\vee P)<\log|\Sigma|.
\]
Hence, $\pi\cap B\notin P\RBS(B)$ and we are done. 
\end{proof}

To prove Theorem \ref{thm:main} in the next section, we would need to run linear programs on the polytopes $P\RBS(B)$. As opposed to \cite{BCLV_subgroup_tests}, where all equations (and inequalities)  defining the pseudo IRSs $P\IRS(B)$ were integral, the sets  $P\RBS(B)$ have irrational coefficients in their defining inequalities. 
This means that the algorithm calculating the optimum over these sets would first need to  approximate them, and that leads to errors. 
There are standard continuity results for linear programming (cf.\ \cite{wets2009continuity}), which can still guarantee the algorithmic outer approximation we need. But, we decided to be more direct and define an alternative outer approximation to $P\RBS(B)$ which uses only rational coefficients in its defining inequalities. 

Given a real number $a$ and a positive integer $n$, let $\rho(a,n)=\frac{\lceil 10^n \cdot a\rceil}{10^n}$ --- this is the $n$-digits upper decimal expansion of $a$. Similarly, for an inequality $\alpha \colon \sum a_iX_i+b\geq 0$ with real coefficients $a_i,b$ and variables $X_i$, we let $\rho(\alpha,n)\colon \sum\rho(a_i,n)X_i+\rho(b,n)\geq 0$. Note that  a non-negative assignment $\forall i, X_i=v_i\geq 0$ which satisfies $\alpha$ must also satisfy $\rho(\alpha,n)$ for every $n$, since $\rho(b,n)\geq b$ and $\rho(a_i,n)X_i\geq a_i X_i$. Let $P\RBS(B,n)$ be $P\IRS(B)$ with the inequalities $\rho(\eqref{eq:local_cont_condition},n)$\footnote{As the constant $\log|\Sigma|$ is on the right hand side of \eqref{eq:local_cont_condition}, one first needs to move it to the left and then apply $\rho(\cdot,n)$.} instead of \eqref{eq:local_cont_condition} for every relevant tuple $W,W',A,\Sigma,\Omega,\psi$. For $B\subseteq C\subseteq \cF$, and $A\subseteq C$, recall the notation $R_{B\subseteq C}(A)=A\cap B\subseteq B$ for the restriction map from $C$ to $B$. The following is quite immediate, and we leave it without a proof:
\begin{fact}\label{fact:rational_outer_approximation}
    Let $\Id_\cF\in B\subseteq C\subseteq\cF$ and $n\in \NN$ such that $|B|,|C|<\infty$. Then:
    \begin{itemize}
    \item $P\RBS(B,n+1)\subseteq P\RBS(B,n)$;
      \item $R_{B\subseteq C*}P\RBS(C)=\{R_{B\subseteq C*}\pi\mid\pi\in P\RBS(C)\}\subseteq P\RBS(B)$;
        \item $R_{B\subseteq C*}P\RBS(C,n)=\{R_{B\subseteq C*}\pi\mid\pi\in P\RBS(C,n)\}\subseteq P\RBS(B,n)$;
        \item $\bigcap_{n=1}^\infty P\RBS(B,n)=P\RBS(B)$.
    \end{itemize}
    By combining these observations  with Theorem \ref{thm:outer_approximation}, and by denoting $\widetilde{P\RBS}(B,n)$ for all random subgroups $\pi\in \Prob(\Sub(\cF))$ satisfying $R_{B*}\pi=\pi\cap B\in P\RBS(B,n)$, we have
    \[
\bigcap_{\substack{\Id_\cF\in B\subseteq \cF\\ |B|<\infty}}\widetilde{P\RBS}(B,|B|)=\IRS_\RBS(\cF).
    \]
\end{fact}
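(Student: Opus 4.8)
The plan is to establish the four bullet points and then chain them to obtain the displayed identity; the only ingredient beyond bookkeeping is a window--marginal identity that has already been extracted inside the proof of Theorem~\ref{thm:outer_approximation}. Fix notation: for a finite $\Id_\cF\in B\subseteq\cF$, the set $P\RBS(B)$ is $P\IRS(B)$ intersected with the finitely many half-spaces \eqref{eq:local_cont_condition}, one for each admissible tuple $\tau=(W,W',A,\Sigma,\Omega,\psi)$; by \eqref{eq:interpretation_of_entropy_given_P} such a half-space has the shape $\sum_{K\in P\Sub(B)}a^\tau_K\,\pi(K)-\log|\Sigma|\ge 0$ with \emph{nonnegative} real coefficients $a^\tau_K=H_{{\bf 1}_K\times\nu_K}(\psi)+H_{{\bf 1}_K\times\nu_K}(\phi_\Sigma|\psi^A)$, and $P\RBS(B,n)$ replaces each such inequality by its digit truncation $\sum_K\rho(a^\tau_K,n)\,\pi(K)+\rho(-\log|\Sigma|,n)\ge 0$. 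I will use three elementary properties of $\rho$: $\rho(a,n)\ge a$; $\rho(a,n)\ge\rho(a,n+1)$, because $\lceil 10^{n+1}a\rceil\le 10\lceil 10^n a\rceil$; and $\rho(a,n)\to a$ as $n\to\infty$. Recall finally that a random subgroup of $\cF$ lies in $\widetilde{P\RBS}(B)$, resp.\ $\widetilde{P\RBS}(B,n)$, exactly according to whether $\pi\cap B$ lies in $P\RBS(B)$, resp.\ $P\RBS(B,n)$, so throughout one may argue with pseudo IRSs of $B$.

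The first and fourth bullets are immediate. If a (nonnegative) $\pi$ satisfies the $(n{+}1)$-truncation of \eqref{eq:local_cont_condition} for $\tau$, then since $\rho(\cdot,n)\ge\rho(\cdot,n+1)$ and $\pi(K)\ge 0$ it also satisfies the $n$-truncation, giving $P\RBS(B,n+1)\subseteq P\RBS(B,n)$. Likewise $P\RBS(B)\subseteq\bigcap_n P\RBS(B,n)$ follows from $\rho(\cdot,n)\ge(\cdot)$ and nonnegativity of $\pi$; and for the reverse inclusion one fixes $\tau$ (finitely many per $B$) and lets $n\to\infty$ in $\sum_K\rho(a^\tau_K,n)\pi(K)+\rho(-\log|\Sigma|,n)\ge 0$ to recover $\sum_K a^\tau_K\pi(K)-\log|\Sigma|\ge 0$, i.e.\ \eqref{eq:local_cont_condition}, so $\bigcap_n P\RBS(B,n)=P\RBS(B)$ (all these sets share the factor $P\IRS(B)$).

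The heart of the matter is restriction compatibility (bullets two and three). Given $B\subseteq C$ and a tuple $\tau=(W,W',A,\Sigma,\Omega,\psi)$ admissible for $B$, put $\psi_C(K,c)=\psi(K\cap B,c|_B)$ on $\{0,1\}^C\times\Sigma^C$. Because $W,W',\gamma W\gamma^{-1},\gamma W'\subseteq B\subseteq C$ and $|\Sigma|,|\Omega|\le|B|\le|C|$, the tuple $\tau_C=(W,W',A,\Sigma,\Omega,\psi_C)$ is admissible for $C$, and a direct check (the local analogue of the identity in the proof of Theorem~\ref{thm:outer_approximation}) gives $\psi_C^\gamma(K,c)=\psi^\gamma(K\cap B,c|_B)$, hence $\psi_C^A(K,c)=\psi^A(K\cap B,c|_B)$. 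The key claim is that $a^{\tau_C}_K=a^{\tau}_{K\cap B}$ for every pseudo subgroup $K$ of $C$; equivalently, for such $K$ the joint laws of $(\phi_\Sigma,\psi_C,\psi_C^A)$ under ${\bf 1}_K\times\nu_K$ (colorings of $C$) and of $(\phi_\Sigma,\psi,\psi^A)$ under ${\bf 1}_{K\cap B}\times\nu_{K\cap B}$ (colorings of $B$) agree. This is precisely the window--marginal fact used for Theorem~\ref{thm:outer_approximation} with $C$ in place of $\cF$: the hypothesis $W'W'^{-1}\subseteq W\subseteq B$ forces the equivalence relations generated on the relevant windows by $K$ and by $K\cap B$ to coincide, since any two window elements lie at a difference belonging to $B$, so a connecting chain collapses to a single generating pair. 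Granting this, summing $a^{\tau_C}_K\pi(K)$ over $K$ and regrouping by $K\cap B$ shows that, as a functional of $\pi$, the $\tau_C$-inequality on $P\IRS(C)$ is the pull-back along $R_{B\subseteq C*}$ of the $\tau$-inequality on $P\IRS(B)$; and since $\rho(a^{\tau_C}_K,n)=\rho(a^{\tau}_{K\cap B},n)$ this persists after truncation. Combined with $R_{B\subseteq C*}P\IRS(C)\subseteq P\IRS(B)$ from \cite{BCLV_subgroup_tests}, and letting $\tau$ range over all tuples admissible for $B$, one obtains $R_{B\subseteq C*}P\RBS(C)\subseteq P\RBS(B)$ and $R_{B\subseteq C*}P\RBS(C,n)\subseteq P\RBS(B,n)$; the former also follows formally from the latter together with the fourth bullet.

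Finally the displayed identity. The inclusion $\supseteq$ holds because $\IRS_\RBS(\cF)=\bigcap_B\widetilde{P\RBS}(B)$ by Theorem~\ref{thm:outer_approximation} and $P\RBS(B)\subseteq P\RBS(B,|B|)$ by the fourth bullet. For $\subseteq$, let $\pi\in\bigcap_B\widetilde{P\RBS}(B,|B|)$. Since $P\RBS(B,|B|)\subseteq P\IRS(B)$, the identity $\bigcap_B\widetilde{P\IRS}(B)=\IRS(\cF)$ from \cite{BCLV_subgroup_tests} gives $\pi\in\IRS(\cF)$. Now fix a finite $\Id_\cF\in B_0\subseteq\cF$ and $n\in\NN$, and choose a finite $C\supseteq B_0$ with $|C|\ge n$. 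Then $\pi\cap C\in P\RBS(C,|C|)\subseteq P\RBS(C,n)$ by the first bullet, so $\pi\cap B_0=R_{B_0\subseteq C*}(\pi\cap C)\in P\RBS(B_0,n)$ by the third bullet. As $n$ was arbitrary, $\pi\cap B_0\in\bigcap_n P\RBS(B_0,n)=P\RBS(B_0)$ by the fourth bullet; as $B_0$ was arbitrary, $\pi\in\bigcap_B\widetilde{P\RBS}(B)=\IRS_\RBS(\cF)$ by Theorem~\ref{thm:outer_approximation}. The main --- essentially the only --- obstacle is the \emph{exact} equality $a^{\tau_C}_K=a^{\tau}_{K\cap B}$: an approximate equality would not suffice, since then digit truncation would fail to commute with restriction in the third bullet; this is why one must fall back on the precise combinatorics already carried out for Theorem~\ref{thm:outer_approximation}.
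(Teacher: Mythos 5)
Your bullets 1 and 4, and the final chaining argument deducing the displayed identity from the bullets together with Theorem \ref{thm:outer_approximation}, are fine. The gap is in the key claim on which bullets 2 and 3 rest: the exact identity $a^{\tau_C}_K=a^{\tau}_{K\cap B}$ is false, and your justification --- ``any two window elements lie at a difference belonging to $B$, so a connecting chain collapses to a single generating pair'' --- is only valid \emph{within a single} translated window. For $x\in\gamma_1W'$, $y\in\gamma_2W'$ with $\gamma_1\neq\gamma_2$, the difference $xy^{-1}=\gamma_1w_1w_2^{-1}\gamma_2^{-1}$ need not lie in $B$ (the hypotheses give only $W'W'^{-1}\subseteq W$ and $\gamma W\gamma^{-1},\gamma W'\subseteq B$). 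Hence the partition of the color window $\{\Id_\cF\}\cup W'\cup\bigcup_{\gamma\in A}\gamma W'$ induced by ${}_K\backslash^C$ can be strictly coarser than the one induced by ${}_{K\cap B}\backslash^B$: the two laws do agree for each single observable and even for each pair $(\phi_\Sigma,\psi^\gamma)$, but not for the join $\psi^A$, which is exactly what $H(\phi_\Sigma\mid\psi^A)$ sees.

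Concretely, in $\cF\supseteq\langle a,b\rangle$ take $W'=\{\Id_\cF,b\}$, $W=\{\Id_\cF,b,b^{-1}\}$, $A=\{\Id_\cF,a\}$, $B=\{\Id_\cF,b,b^{-1},a,ab,aba^{-1},ab^{-1}a^{-1}\}$, $\Sigma=\Omega=\{0,1\}$, $\psi(K',c')=c'(\Id_\cF)\wedge c'(b)$, $C=B\cup\{ba^{-1}\}$ and $K=\langle ba^{-1}\rangle\cap C=\{\Id_\cF,ba^{-1}\}$, a pseudo subgroup of $C$ with $K\cap B=\{\Id_\cF\}$. Under $\nu_{K\cap B}$ the bits $c(\Id_\cF),c(b),c(a),c(ab)$ are i.i.d.\ uniform, while under $\nu_K$ one has $c(a)=c(b)$ (one step, since $ba^{-1}\in K$) and the remaining bits i.i.d. Both sides give the same first term $H(\psi)$, but, writing $h(p)$ for the binary entropy and computing in bits, $H_{\nu_{K\cap B}}(\phi_\Sigma\mid\psi^A)=H\bigl(c(\Id_\cF)\mid c(\Id_\cF)\wedge c(b),\,c(a)\wedge c(ab)\bigr)=\tfrac34h(1/3)\approx0.689$, whereas $H_{\nu_K}(\phi_\Sigma\mid\psi_C^A)=H\bigl(c(\Id_\cF)\mid c(\Id_\cF)\wedge c(b),\,c(b)\wedge c(ab)\bigr)=\tfrac58h(2/5)\approx0.607$; so $a^{\tau}_{K\cap B}\approx1.500\neq1.418\approx a^{\tau_C}_K$. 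As you yourself point out, an approximate identity would not survive the digit truncation in bullet 3, so this is fatal to the argument as written; and the weaker one-sided bound $a^{\tau}_{K\cap B}\ge a^{\tau_C}_K$, which would still yield the inclusions termwise, is neither stated nor proved in your proposal (and is not obvious: identifying positions across two translated windows correlates the ``noise'' of the observables $\psi^{\gamma_1},\psi^{\gamma_2}$ while leaving each single-window law unchanged, and conditioning on correlated copies of a noisy observation can reveal less about $\phi_\Sigma$ than conditioning on independent ones). Consequently bullets 2 and 3, and with them the displayed identity, are not established by the proposal.
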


\section{\textbf{Proof of Theorem \ref{thm:main}}}\label{sec:proof_of_main}
Recall the notion of a subgroup test $\cT$ (Section \ref{sec:subgroup_tests}) consisting of challenges $\{D_i\}_{i=1}^m$ and a rational probability distribution $\mu\in \Prob [m]$, and specifically its value versus an IRS defined in  equation \eqref{eq:value_of_a_subgroup_test}. As challenges are continuous functions $D_i\colon \Sub(\cF)\to \{0,1\}$, there are finite subsets  $\Delta_i\subseteq \cF$  and functions $d_i\colon P\Sub(\Delta_i)\to \{0,1\}$ such that $D_i(K)=d_i(K\cap \Delta_i)$. Let $\Delta=\bigcup_{i=1}^m\Delta_i$ be the \emph{mutual continuity domain} of $\cT$. For every finite subset $B\subseteq \cF$ such that $\Delta\subseteq B$, one can define a value of a  random pseudo subgroup $\pi$ of $B$ by letting 
\begin{equation}\label{eq:value_of_a_pseudo_random_subgp}
    \val(\cT,\pi)=\Ex_{i\sim \mu}\Ex_{K\sim \pi}[d_i(K\cap \Delta_i)].
\end{equation}
It is straightforward to check that $\val(\cT,\pi)$ as in \eqref{eq:value_of_a_subgroup_test} agrees with $\val(\cT,\pi\cap B)$ as in \eqref{eq:value_of_a_pseudo_random_subgp}, and so it makes sense to use the same notation. Let $\val_\RBS(\cT)$ be the supremum of $\val(\cT,\pi)$ when running over all IRSs  $\pi$ which satisfy the RBS criterion. 

\begin{thm}\label{thm:algorithmic_outer_approximation}
    There is an algorithm which takes as input a subgroup test $\cT$ and outputs a non-increasing sequence of rationals $\{\rho_r\}_{r=r_0}^\infty$ such that $\displaystyle{\lim_{r\to \infty} }\rho_r=\val_\RBS(\cT)$.
\end{thm}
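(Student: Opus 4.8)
The plan is to run an NPA-type linear programming hierarchy over the rational outer approximations $P\RBS(B,|B|)$ supplied by Fact~\ref{fact:rational_outer_approximation}. First I would read off from $\cT$ its mutual continuity domain $\Delta=\bigcup_i\Delta_i\subseteq\cF$, fix an increasing exhaustion $\Id_\cF\in B_{r_0}\subseteq B_{r_0+1}\subseteq\cdots$ of $\cF$ by finite sets with $\Delta\subseteq B_{r_0}$ and $\bigcup_r B_r=\cF$ (the index $r_0$ being computable from $\cT$), and for each $r\ge r_0$ set
\[
\rho_r \;=\; \max_{\pi\in P\RBS(B_r,|B_r|)}\ \val(\cT,\pi).
\]
By Theorem~\ref{thm:outer_approximation} and Fact~\ref{fact:rational_outer_approximation}, $P\RBS(B_r,|B_r|)$ is a nonempty compact convex polytope in $\RR^{\{0,1\}^{B_r}}$ whose finitely many defining half-spaces have \emph{rational} coefficients computable from $B_r$; and since $\mu$ is a rational distribution and the $d_i$ are $\{0,1\}$-valued, $\pi\mapsto\val(\cT,\pi)$ from \eqref{eq:value_of_a_pseudo_random_subgp} is a linear functional on $\RR^{\{0,1\}^{B_r}}$ with rational coefficients. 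Hence each $\rho_r$ is the optimum of an explicit rational linear program, so it is a rational number computable in finite time; the algorithm outputs $\rho_{r_0},\rho_{r_0+1},\dots$.

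Monotonicity and the ``easy'' bound I would get directly from Fact~\ref{fact:rational_outer_approximation}. For $r'\ge r$ one has $R_{B_r\subseteq B_{r'}*}P\RBS(B_{r'},|B_{r'}|)\subseteq P\RBS(B_r,|B_{r'}|)\subseteq P\RBS(B_r,|B_r|)$, and since $\Delta\subseteq B_r$ the value $\val(\cT,\cdot)$ is unchanged by restriction, so optimizing over the smaller image can only decrease the maximum: $\rho_{r'}\le\rho_r$. Likewise every $\pi\in\IRS_\RBS(\cF)$ satisfies $R_{B_r*}\pi\in P\RBS(B_r,|B_r|)$, whence $\rho_r\ge\val(\cT,\pi)$ and, taking the supremum, $\rho_r\ge\val_\RBS(\cT)$ for all $r$. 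Being non-increasing and bounded below, $\{\rho_r\}$ converges to some $L\ge\val_\RBS(\cT)$.

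The substantive part is the reverse inequality $L\le\val_\RBS(\cT)$, which I would prove by compactness. For each $r$ choose an optimizer $\pi_r\in P\RBS(B_r,|B_r|)$. Because $P\Sub(B_r)$ is finite, $\pi_r$ lifts to an honest random subgroup $\widehat\pi_r\in\Prob(\Sub(\cF))$: for each pseudo subgroup $A\subseteq B_r$ pick a $K_A\le\cF$ with $K_A\cap B_r=A$ and push $\pi_r$ forward along $A\mapsto K_A$, so that $R_{B_r*}\widehat\pi_r=\pi_r$ and, since $\Delta\subseteq B_r$, also $\val(\cT,\widehat\pi_r)=\val(\cT,\pi_r)=\rho_r$. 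By weak* compactness of $\Prob(\Sub(\cF))$ pass to a subsequence $\widehat\pi_{r_k}\to\pi_\infty$. For each fixed $s$ the restriction map $\mu\mapsto R_{B_s*}\mu$ is weak* continuous, and for $r_k\ge s$ one has $R_{B_s*}\widehat\pi_{r_k}=R_{B_s\subseteq B_{r_k}*}\pi_{r_k}\in P\RBS(B_s,|B_s|)$ by Fact~\ref{fact:rational_outer_approximation} (using $|B_{r_k}|\ge|B_s|$); as this polytope is closed, $R_{B_s*}\pi_\infty\in P\RBS(B_s,|B_s|)$. Since $\bigcup_s B_s=\cF$, every finite $B\subseteq\cF$ lies in some $B_s$, so $R_{B*}\pi_\infty=R_{B\subseteq B_s*}R_{B_s*}\pi_\infty\in P\RBS(B,|B_s|)\subseteq P\RBS(B,|B|)$, and Fact~\ref{fact:rational_outer_approximation} gives $\pi_\infty\in\IRS_\RBS(\cF)$. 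Finally $\val(\cT,\cdot)$ is weak* continuous, so $\val(\cT,\pi_\infty)=\lim_k\val(\cT,\widehat\pi_{r_k})=\lim_k\rho_{r_k}=L$, and hence $L=\val(\cT,\pi_\infty)\le\val_\RBS(\cT)$. Combined with the previous paragraph, $\lim_r\rho_r=L=\val_\RBS(\cT)$.

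I expect the only real subtlety to lie in this last step: assembling the sequence of LP optima, via the finite lifting trick and weak* compactness, into an \emph{honest} IRS that both \emph{satisfies} RBS and \emph{attains} the limiting value $L$ — here the closedness of the polytopes $P\RBS(B,|B|)$ (Theorem~\ref{thm:outer_approximation}) and the identity $\IRS_\RBS(\cF)=\bigcap_B\widetilde{P\RBS}(B,|B|)$ (Fact~\ref{fact:rational_outer_approximation}) are exactly what make it go through. Everything else is bookkeeping, with the one point worth flagging being that the half-spaces in Fact~\ref{fact:rational_outer_approximation} are \emph{rational}, which is what permits the linear programs to be solved exactly and avoids any continuity-of-LP issue that irrational defining data would cause.
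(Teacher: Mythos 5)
Your proposal is correct and follows essentially the same route as the paper: compute the LP optima $\rho_r=\max\{\val(\cT,\pi)\mid \pi\in P\RBS(B_r,|B_r|)\}$ over the rational polytopes of Fact \ref{fact:rational_outer_approximation} and deduce convergence to $\val_\RBS(\cT)$ from the identity $\bigcap_B\widetilde{P\RBS}(B,|B|)=\IRS_\RBS(\cF)$. Your explicit lifting-plus-weak*-compactness argument for the inequality $\lim_r\rho_r\leq\val_\RBS(\cT)$ correctly spells out the step the paper leaves implicit in its appeal to Fact \ref{fact:rational_outer_approximation}.
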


\begin{proof}
    Let $\Delta$ be the mutual continuity domain of $\cT$. Now, for every finite set $B\subseteq \cF$ which contains $\Delta$ and $\Id_\cF$, $\val(\cT,\cdot)$ induces a linear functional  from $\RR^{\{0,1\}^B}$ to $\RR$. Maximizing this functional over a compact computable rational polytope of $\RR^{\{0,1\}^B}$ can be done in finite time (this is exactly the task of linear programing). Hence, finding $\rho_B=\max\{\val(\cT,\pi)\mid \pi\in P\RBS(B,|B|)\}$ can be calculated in  finite time.
     As $\Delta$ is a finite set of words in the free group, there is an integer $r_0$ which is the length of the longest word in $\Delta$. So, $\Delta\subseteq B_r(\Id_\cF)$ for every $r\geq r_0$. Let $\rho_r=\rho_{B_r}$ for every $r\geq r_0$. By Fact \ref{fact:rational_outer_approximation} and the fact \eqref{eq:value_of_a_subgroup_test} and \eqref{eq:value_of_a_pseudo_random_subgp} agree,  the limit of the sequence $\rho_r$ is indeed $\val_\RBS(\cT)$.
\end{proof}

By combining Theorem \ref{thm:algorithmic_outer_approximation} with Fact \ref{fact:inner} and Fact \ref{fact:sof_val_is_RE_hard}, we deduce Theorem \ref{thm:main}. \qed

\bibliographystyle{plain}
\bibliography{Bibliography}

\end{document}